\numberwithin{equation}{section}
\newtheorem{theorem}{Theorem}[section]
\newtheorem{lemma}[theorem]{Lemma}
\newtheorem{proposition}[theorem]{Proposition}
\newtheorem{corollary}[theorem]{Corollary}
\theoremstyle{definition}
\numberwithin{equation}{section}
\newcommand{\B}{\mathbb{B}}
\newcommand{\h}{\mathcal{H}}
\newcommand{\C}{\mathbb{C}}
\newcommand{\s}{\mathbb{S}}
\newcommand{\HT}{\mathcal{HT}}
\begin{document}
\title[Tent Carleson measures and superposition operators]{Tent Carleson measures and superposition operators on Hardy type tent spaces}
\date{June 27, 2024.
}
\author[J. Chen]{Jiale Chen}
\address{School of Mathematics and Statistics, Shaanxi Normal University, Xi'an 710119, China.}
\email{jialechen@snnu.edu.cn}

\thanks{This work was supported by the Fundamental Research Funds for the Central Universities (No. GK202207018) of China.}

\subjclass[2020]{32A37, 42B35, 47B38, 47H30}
\keywords{Tent space, Carleson measure, area operator, superposition operator}


\begin{abstract}
  \noindent In this paper, we completely characterize the positive Borel measures $\mu$ on the unit ball $\mathbb{B}_n$ of $\mathbb{C}^n$ such that the Carleson embedding from holomorphic Hardy type tent spaces $\mathcal{HT}^p_{q,\alpha}$ into the tent spaces $T^t_s(\mu)$ is bounded for all $0<p,q,s,t<\infty$ and $\alpha>-n-1$. As an application, we determine the indices $0<p,q,s,t<\infty$ and $\alpha,\beta>-n-1$ such that the inclusion $\mathcal{HT}^p_{q,\alpha}\subset\mathcal{HT}^t_{s,\beta}$ holds, which allows us to completely characterize the nonlinear superposition operators between Hardy type tent spaces.
\end{abstract}
\maketitle


\section{Introduction}
\allowdisplaybreaks[4]
Let $\mathbb{B}_n$ be the open unit ball of $\mathbb{C}^n$ and $\s_n=\partial\B_n$ be the unit sphere. For $\xi\in\s_n$ and $\gamma>1$, let $\Gamma_{\gamma}(\xi)$ denote the Kor\'{a}nyi region defined by
$$\Gamma_{\gamma}(\xi):=\left\{z\in\B_n:|1-\langle z,\xi\rangle|<\frac{\gamma}{2}\left(1-|z|^2\right)\right\}.$$
In this paper, we agree that $\Gamma(\xi):=\Gamma_2(\xi)$. Given $0<p,q<\infty$ and a positive Borel measure $\mu$ on $\B_n$, the tent space $T^p_q(\mu)$ consists of $\mu$-measurable functions $f:\B_n\to\C$ such that
$$\|f\|_{T^p_q(\mu)}:=\left(\int_{\s_n}\left(\int_{\Gamma(\xi)}|f(z)|^q\frac{d\mu(z)}{(1-|z|^2)^n}\right)^{p/q}d\sigma(\xi)\right)^{1/p}
<\infty,$$
where $d\sigma$ is the normalized area measure on $\s_n$. If $d\mu(z)=dv_{n+\alpha}(z):=(1-|z|^2)^{n+\alpha}dv(z)$ for some $\alpha\in\mathbb{R}$, then we write $T^p_{q,\alpha}:=T_q^p(v_{\alpha+n})$. Here $dv$ denotes the normalized Lebesgue measure on $\B_n$. In the above definition, the Kor\'{a}nyi region $\Gamma(\xi)$ can be replaced by $\Gamma_{\gamma}(\xi)$ for any aperture $\gamma>1$, and it is well-known that any two apertures generate the same function space with equivalent (quasi-)norms.

The concept of tent spaces was first introduced by Coifman, Meyer and Stein \cite{CMS} in order to study several problems in harmonic analysis. Since then, these spaces have become powerful tools in harmonic analysis and operator theory; see \cite{CV,Lue91,MPPW,P1,P2018,PR}. In 2018, Per\"{a}l\"{a} \cite{Per} initiated the study of the holomorphic versions of tent spaces
on the unit ball $\B_n$. Given $0<p,q<\infty$ and $\alpha>-n-1$, the holomorphic Hardy type tent space $\HT^p_{q,\alpha}$ is defined by
$$\HT^p_{q,\alpha}:=T^p_{q,\alpha}\cap\h(\B_n)$$
with inherited (quasi)-norm, where $\h(\B_n)$ is the set of holomorphic functions on $\B_n$. It is quite interesting that these holomorphic tent spaces include several classical spaces of holomorphic functions on $\B_n$. For $0<p<\infty$ and $\alpha>-1$, the tent space $\HT^p_{p,\alpha-n}$ is exactly the Bergman space $A^p_{\alpha}$, consisting of holomorphic functions $f$ on $\B_n$ such that
$$\|f\|^p_{A^p_{\alpha}}:=\int_{\mathbb{B}_n}|f(z)|^pdv_{\alpha}(z)<\infty;$$
and a function $g\in\h(\B_n)$ belongs to the Hardy space $H^p$ if and only if $Rg\in\HT^{p}_{2,1-n}$, where $Rg$ is the radial derivative of $g$ (see for instance \cite{P1}).

Among other results, Per\"{a}l\"{a} \cite{Per} described the dual of $\HT^p_{q,\alpha}$ for all $0<p,q<\infty$ and $\alpha>-n-1$. After that, the function theory and operator theory on these holomorphic tent spaces developed quickly; see \cite{Ch23,Pa20,WZ21,WZ22}. In this paper, we are going to establish a tent Carleson measure theorem for Hardy type tent spaces $\HT^p_{q,\alpha}$, and then apply it to characterize the nonlinear superposition operators acting between $\HT^p_{q,\alpha}$ and $\HT^t_{s,\beta}$.

The concept of Carleson measures was introduced by Carleson \cite{Car0,Car1} when studying interpolating sequences for bounded analytic functions on the unit disk, in route of solving the famous corona problem. Carleson's original result characterizes the positive Borel measures $\mu$ on the unit disk $\mathbb{D}$ such that the embedding $I_d:H^p\to L^p(\mathbb{D},\mu)$ is bounded for $0<p<\infty$, which was  extended to the setting of the unit ball $\B_n$ by Duren, H\"{o}rmander and Luecking \cite{Du69,H,Lue91}, by obtaining a complete description of the boundedness of the embedding $I_d:H^p\to L^q(\B_n,\mu)$ for all $0<p,q<\infty$. After that, Carleson type measures for various function spaces have been investigated and have become important tools for the study of function and operator theory; see for instance \cite{KS88,PP10,PZ15,PR14}. Recently, Wang and Zhou \cite{WZ21,WZ22} characterized the boundedness of the embedding $I_d:\HT^p_{q,\alpha}\to L^s(\B_n,\mu)$ and applied it to investigate the boundedness of Volterra type integration operators and Toeplitz type operators acting on Hardy type tent spaces.

Apart from embedding Hardy type tent spaces $\HT^p_{q,\alpha}$ into Lebesgue spaces $L^s(\B_n,\mu)$, it is more natural to consider the embedding $I_d:\HT^p_{q,\alpha}\to T^t_s(\mu)$, that we are going to pursue in this paper. It turns out that this is equivalent to consider area operators acting from $\HT^p_{q,\alpha}$ into Lebesgue spaces $L^t(\s_n)$. Given $s>0$ and a positive Borel measure on $\B_n$, the area operator $A_{\mu,s}$ acting on $\h(\B_n)$ is defined by
$$A_{\mu,s}f(\xi):=\left(\int_{\Gamma(\xi)}|f(z)|^s\frac{d\mu(z)}{(1-|z|^2)^n}\right)^{1/s},\quad \xi\in\s_n.$$
It is obvious that $I_d:\HT^p_{q,\alpha}\to T^t_s(\mu)$ is bounded if and only if $A_{\mu,s}:\HT_{q,\alpha}^p\to L^t(\s_n)$ is bounded. The operators $A_{\mu,s}$ were introduced by Cohn \cite{Co} in the setting of the unit disk, and related results can be found in \cite{GLW,LLZ,Wu06,Wu11}. Lately, Lv, Pau and Wang \cite{LvP2,LvPW} completely characterized the boundedness of area operators acting on Hardy and Bergman spaces over the unit ball $\B_n$. Our first result generalizes these results and characterizes the boundedness of $I_d:\HT^p_{q,\alpha}\to T^t_s(\mu)$, as well as $A_{\mu,s}:\HT_{q,\alpha}^p\to L^t(\s_n)$, for all possible choices of $0<p,q,s,t<\infty$ and $\alpha>-n-1$.

To state our first result, recall that the Bergman metric $\beta(\cdot,\cdot)$ on $\B_n$ is defined by
$$\beta(z,w):=\frac{1}{2}\log\frac{1+|\varphi_z(w)|}{1-|\varphi_z(w)|},\quad z,w\in\B_n,$$
where $\varphi_z$ is the involutive automorphism of $\B_n$ that interchanges $z$ and $0$. Given $\alpha>-n-1$, $r\in(0,1)$ and a positive Borel measure $\mu$ on $\B_n$, we define
$$\widehat{\mu}_r(z):=\frac{\mu(D(z,r))}{(1-|z|^2)^{2n+1+\alpha}},\quad z\in\B_n,$$
where $D(z,r):=\{w\in\B_n:\beta(z,w)<r\}$ is the Bergman metric ball centered at $z$ with the radius $r$. Our first result reads as follows.

\begin{theorem}\label{main}
Let $\alpha>-n-1$, $0<p,q,s,t<\infty$, and let $\mu$ be a positive Borel measure on $\B_n$. Then the following statements are equivalent.
\begin{enumerate}
	\item [(a)] $I_d:\HT^p_{q,\alpha}\to T^t_s(\mu)$ is bounded.
	\item [(b)] $A_{\mu,s}:\HT_{q,\alpha}^p\to L^t(\s_n)$ is bounded.
	\item [(c)] For any $r\in(0,1)$, one of the following conditions holds:
	  \begin{enumerate}
	  	\item [(1)] If $p<t$, or $p=t$ and $q\leq s$, then
	  	$$\sup_{z\in\B_n}\widehat{\mu}_r(z)(1-|z|^2)^{\frac{(q-s)(n+1+\alpha)}{q}+ns\left(\frac{1}{t}-\frac{1}{p}\right)}<\infty;$$
	  	\item [(2)] If $p=t$ and $q>s$, then
	  	$$\widehat{\mu}_r(z)^{\frac{q}{q-s}}dv_{\alpha+n}(z)$$
	  	is a Carleson measure;
	  	\item [(3)] If $p>t$ and $q>s$, then
	  	$$\xi\mapsto\left(\int_{\Gamma(\xi)}\widehat{\mu}_r(z)^{\frac{q}{q-s}}dv_{\alpha}(z)\right)^{\frac{q-s}{q}}$$
	  	belongs to $L^{\frac{pt}{s(p-t)}}(\s_n)$;
	  	\item [(4)] If $p>t$ and $q\leq s$, then
	  	$$\xi\mapsto\sup_{z\in\Gamma(\xi)}\widehat{\mu}_r(z)(1-|z|^2)^{\frac{(q-s)(n+1+\alpha)}{q}}$$
	  	belongs to $L^{\frac{pt}{s(p-t)}}(\s_n)$.
	  \end{enumerate}
\end{enumerate}
\end{theorem}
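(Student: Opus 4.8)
The equivalence of (a) and (b) is immediate: by the very definitions of $T^t_s(\mu)$ and of the area operator $A_{\mu,s}$ one has $\|I_d f\|_{T^t_s(\mu)}=\|A_{\mu,s}f\|_{L^t(\s_n)}$ for every $f$, so the two boundedness statements literally coincide. The whole content of the theorem is therefore the equivalence of (b) with the measure-theoretic conditions in (c), which I would prove by \emph{discretization} against an $r$-lattice.

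The plan is to fix an $r$-lattice $\{a_k\}$ in the Bergman metric, so that the balls $D(a_k,r)$ cover $\B_n$ with bounded overlap while the $D(a_k,r/2)$ are pairwise disjoint. For holomorphic $f$ the submean value inequality gives $|f(z)|^s\lesssim (1-|z|^2)^{-(n+1)}\int_{D(z,r)}|f|^s\,dv$, and both $(1-|z|^2)$ and $\mu(D(z,r))$ are comparable on each Bergman ball. Combining these with $v(D(a_k,r))\approx(1-|a_k|^2)^{n+1}$, I would establish, on the one hand,
\[
A_{\mu,s}f(\xi)^s\lesssim\sum_{a_k\in\Gamma_\gamma(\xi)}\lambda_k\,c_k^s,\qquad \lambda_k:=\widehat{\mu}_r(a_k)(1-|a_k|^2)^{\frac{(q-s)(n+1+\alpha)}{q}},\ \ c_k:=|f(a_k)|(1-|a_k|^2)^{\frac{n+1+\alpha}{q}},
\]
for an aperture $\gamma>2$ depending only on $r$, and, on the other hand, the standard lattice description of the source (quasi-)norm,
\[
\|f\|_{\HT^p_{q,\alpha}}^p\approx\int_{\s_n}\Big(\sum_{a_k\in\Gamma(\xi)}c_k^q\Big)^{p/q}d\sigma(\xi).
\]
This reduces (b) to a purely sequential tent-space inequality for $(c_k)$ with multiplier $(\lambda_k)$, and it is here that the four regimes of (c) separate, one split coming from the fibre comparison $q$ versus $s$ and one from the base comparison $p$ versus $t$.

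For the sufficiency (c)$\Rightarrow$(b) I would treat the fibre sum and the boundary integral in turn. In the fibre, if $q\le s$ the inclusion $\ell^{q/s}\subset\ell^1$ yields $\sum_{a_k\in\Gamma_\gamma(\xi)}\lambda_k c_k^s\le\big(\sup_{a_k\in\Gamma_\gamma(\xi)}\lambda_k\big)\big(\sum c_k^q\big)^{s/q}$, while if $q>s$ Hölder's inequality with conjugate exponent $\frac{q}{q-s}$ gives $\sum\lambda_kc_k^s\le\big(\sum\lambda_k^{q/(q-s)}\big)^{(q-s)/q}\big(\sum c_k^q\big)^{s/q}$; this is exactly where the exponent $\frac{q}{q-s}$ of parts (2) and (3) is produced. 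In the base, if $p\le t$ the remaining factor is controlled by a pointwise bound on $\lambda_k$, the extra power $(1-|a_k|^2)^{ns(1/t-1/p)}$ accounting for the mismatch between the $L^t$ output and the $L^p$ input, which gives the single supremum of case (1). If instead $p>t$, one further application of Hölder with conjugate exponents $\frac{p}{t}$ and $\frac{p}{p-t}$ isolates $\|f\|_{\HT^p_{q,\alpha}}^t$ and leaves precisely the $L^{pt/(s(p-t))}(\s_n)$ requirement of parts (3) and (4); when $p=t$ no such splitting is available and the condition instead takes the Carleson form of part (2). Passing from sums back to integrals over $\B_n$ then recovers the stated conditions, with $dv_\alpha$ appearing in (3) after absorbing the lattice volumes.

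For the necessity (b)$\Rightarrow$(c) I would use test functions. The supremum conditions (case (1), and the inner supremum of case (4)) follow by testing against a single normalized kernel $f_a(z)=(1-|a|^2)^{b}(1-\langle z,a\rangle)^{-b-(n+1+\alpha)/q}$ with $\|f_a\|_{\HT^p_{q,\alpha}}\approx1$, which is bounded below on $D(a,r)$ and whose area transform is essentially supported on a boundary cap of measure $\approx(1-|a|^2)^n$. The integral and Carleson conditions of cases (2), (3), (4) cannot be reached by a single kernel; here I would form random superpositions $f_\omega=\sum_k\varepsilon_k(\omega)\,d_k\,f_{a_k}$ with Rademacher signs $\varepsilon_k(\omega)$, apply the boundedness hypothesis uniformly in $\omega$, and then integrate in $\omega$ using Khinchine's inequality to replace $|f_\omega|$ by the square function $(\sum_k|d_kf_{a_k}|^2)^{1/2}$ on both sides. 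Choosing the coefficients $(d_k)$ to be the extremal dual sequence for the $L^{pt/(s(p-t))}$ or Carleson quantity then forces the corresponding condition in (c). The main obstacle, and the most delicate part of the argument, is precisely this last step: one must verify that the random functions genuinely lie in $\HT^p_{q,\alpha}$ with norm controlled by the chosen coefficients, and that the Khinchine lower bound for the area output survives the passage through the two (possibly sub-unit) exponents $s$ and $t$, all while keeping track of the bounded-overlap and aperture adjustments needed to identify the discretized quantities with the continuous ones in the statement.
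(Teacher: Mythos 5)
Your skeleton coincides with the paper's: the identity $\|I_df\|_{T^t_s(\mu)}=\|A_{\mu,s}f\|_{L^t(\s_n)}$ for (a)$\Leftrightarrow$(b), discretization over an $r$-lattice via subharmonicity of $|f|^q$, fibrewise H\"older according to $q\lessgtr s$, basewise H\"older for $p>t$, single kernels $f_a$ for the sup condition, and Rademacher randomization with Khinchine--Kahane for the remaining necessities (this is exactly the paper's Proposition \ref{nece}). However, three steps are missing the idea that makes them work. First, for $p<t$ your sufficiency is wrong as stated: condition (1) is a single supremum \emph{for every} relation of $q$ to $s$, and no pointwise bookkeeping of the weight $(1-|a_k|^2)^{ns(1/t-1/p)}$ converts control of $\xi\mapsto\big(\sum_{a_k\in\Gamma(\xi)}c_k^q\big)^{p/q}$ in $L^1(\s_n)$ into the required $L^{t/q}$-type bound, since $t>p$; one needs a genuine self-improving embedding. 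The paper chooses $\hat p\in(p,t)$ and an auxiliary weight $\beta$ and runs $\HT^p_{q,\alpha}\subset A^{\hat p}_{(\hat p/p-1)n-1+\hat p(n+1+\alpha)/q}\subset\HT^t_{s,\beta}$ (Lemma \ref{inclu}); without this sandwich the $p<t$ case does not close.

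Second, for $p=t$, $q>s$ you assert the ``Carleson form'' but give no sufficiency mechanism, and your fibrewise H\"older cannot supply one: it leaves the factor $\big(\sum_{a_k\in\Gamma(\xi)}\lambda_k^{q/(q-s)}\big)^{(q-s)/q}$, and bounding this uniformly in $\xi$ is strictly stronger than the Carleson condition (which only controls such sums over tents $Q(u)$, not over full Kor\'anyi cones). The engine is the factorization of sequence tent spaces (Theorem \ref{factor}): $T^{p/s}_{q/s}(Z)\cdot T^{\infty}_{q/(q-s)}(Z)=T^{t/s}_{1}(Z)$, the Carleson hypothesis being exactly $\eta^{(r/2)}\in T^{\infty}_{q/(q-s)}(Z)$ by Lemma \ref{disCM}. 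Third, in the necessity for cases (2)--(4) your randomization yields, for all $\lambda\in T^p_q(Z)$, the bound $\int_{\s_n}\big(\sum_{a_k\in\Gamma(\xi)}|\lambda_k|^s\eta^{(2r)}_k\big)^{t/s}d\sigma\lesssim\|\lambda\|^t_{T^p_q(Z)}$; note the left side involves $|\lambda_k|^s$, so what you obtain is that $\eta^{(2r)}$ \emph{multiplies} $T^{p/s}_{q/s}(Z)$ into $T^{t/s}_1(Z)$. Picking one ``extremal dual sequence'' cannot convert this into membership of $\eta^{(2r)}$ in $T^{\infty}_{q/(q-s)}$, $T^{pt/(s(p-t))}_{q/(q-s)}$ or $T^{pt/(s(p-t))}_{\infty}$: one must pair $\big(\eta^{(2r)}\big)^{1/\sigma}$ against an arbitrary element $c$ of the predual and \emph{factor} it, $c_k=d_k\lambda_k^{s/\sigma}$ with $d$ absorbed by fibrewise H\"older and $\lambda\in T^p_q(Z)$ fed into the multiplier inequality --- again Theorem \ref{factor}, combined with the duality Theorem \ref{dual}. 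The auxiliary exponent $\sigma>1$ is not cosmetic: it places the pairing in the range where the duality holds, including the $0<q<1<p$ case with dual $T^{p'}_{\infty}(Z)$ needed precisely in your case (4), where the predual fibre exponent $\rho$ satisfies $\rho\le1$. Finally, identifying the discretized cone quantities with the continuous ones in (3)--(4) uses the covering comparison of Lemma \ref{dis}, which should be invoked rather than assumed.
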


Our approach to Theorem \ref{main} is mainly based on Carleson measures and Khinchine--Kahane's inequalities, dual and factorization tricks for sequence tent spaces, as well as some inclusions between Hardy type tent spaces and Bergman spaces. As an immediate application, we can characterize the inclusion between Hardy type tent spaces as follows.

\begin{corollary}\label{incl}
Let $\alpha,\beta>-n-1$ and $0<p,q,s,t<\infty$. Then $\HT^p_{q,\alpha}\subset\HT^t_{s,\beta}$ if and only if one of the following conditions holds:
\begin{enumerate}
	\item [(i)] $p\geq t$, $q>s$ and $\frac{n+1+\alpha}{q}<\frac{n+1+\beta}{s}$;
	\item [(ii)] $p\geq t$, $q\leq s$ and $\frac{n+1+\alpha}{q}\leq\frac{n+1+\beta}{s}$;
	\item [(iii)] $p<t$ and $\frac{n+1+\alpha}{q}+\frac{n}{p}\leq\frac{n+1+\beta}{s}+\frac{n}{t}$.
\end{enumerate}
\end{corollary}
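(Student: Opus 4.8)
The plan is to recognize the set inclusion $\HT^p_{q,\alpha}\subset\HT^t_{s,\beta}$ as the boundedness of a single Carleson embedding covered by Theorem \ref{main}, and then to specialize the four alternatives in part (c) to that embedding. First I would note that $\HT^t_{s,\beta}=T^t_s(v_{\beta+n})\cap\h(\B_n)$ carries the (quasi-)norm inherited from $T^t_s(v_{\beta+n})$, and that every $f\in\HT^p_{q,\alpha}$ is already holomorphic. Hence $\HT^p_{q,\alpha}\subset\HT^t_{s,\beta}$ holds as a set inclusion if and only if $I_d:\HT^p_{q,\alpha}\to T^t_s(v_{\beta+n})$ is bounded; the forward implication follows from the closed graph theorem (these spaces are complete and convergence in each implies local uniform convergence of holomorphic functions, so the inclusion map has closed graph). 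It therefore suffices to apply Theorem \ref{main} with $\mu=v_{\beta+n}$.

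The single computation that drives everything is the evaluation of $\widehat{\mu}_r$ for this measure. Fixing any $r\in(0,1)$ and using the standard comparabilities $1-|w|^2\asymp1-|z|^2$ for $w\in D(z,r)$ and $v(D(z,r))\asymp(1-|z|^2)^{n+1}$, I would obtain
$$\widehat{\mu}_r(z)=\frac{v_{\beta+n}(D(z,r))}{(1-|z|^2)^{2n+1+\alpha}}\asymp(1-|z|^2)^{\beta-\alpha}.$$
Substituting this into each alternative of Theorem \ref{main}(c) reduces it to an explicit inequality among the exponents. For the two supremum-type conditions, namely condition (1) and condition (4), the relevant quantity is a power $(1-|z|^2)^{E}$ with an explicit exponent $E$, whose supremum (over $\B_n$, respectively over $\Gamma(\xi)$) is finite precisely when $E\geq0$. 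A direct rearrangement then shows that condition (1) with $p<t$ is equivalent to (iii), while condition (1) with $p=t$ and condition (4) are each equivalent to (ii); here one uses that a radial power of $1-|z|^2$ has the same finite supremum over every cone $\Gamma(\xi)$, so the $L^{pt/(s(p-t))}(\s_n)$ requirement in (4) collapses to mere finiteness of that supremum.

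For the remaining alternatives I set $c:=\frac{q\beta-s\alpha}{q-s}$ (with $q>s$), so that $\widehat{\mu}_r(z)^{q/(q-s)}\,dv_\alpha(z)\asymp(1-|z|^2)^{c}\,dv(z)$ and $\widehat{\mu}_r(z)^{q/(q-s)}\,dv_{\alpha+n}(z)\asymp(1-|z|^2)^{c+n}\,dv(z)$. In condition (2) the statement is that $(1-|z|^2)^{c+n}\,dv$ is a Carleson measure, which for a power weight happens exactly when $c+n>-1$. In condition (3), the rotation invariance of both $v_\alpha$ and the family $\{\Gamma(\xi)\}_{\xi\in\s_n}$ makes the integrand essentially radial, so the function $\xi\mapsto\left(\int_{\Gamma(\xi)}\widehat{\mu}_r(z)^{q/(q-s)}\,dv_\alpha(z)\right)^{(q-s)/q}$ is comparable to a constant, and its membership in $L^{pt/(s(p-t))}(\s_n)$ is equivalent to finiteness of the radial cone integral $\int_{\Gamma(\xi)}(1-|z|^2)^{c}\,dv(z)$. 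A slicing estimate, $v(\Gamma(\xi)\cap\{1-|z|^2<\delta\})\asymp\delta^{n+1}$, shows this integral is finite iff $c>-(n+1)$. Thus both (2) and (3) yield $c>-(n+1)$, which rearranges to the strict inequality $\frac{n+1+\alpha}{q}<\frac{n+1+\beta}{s}$ of (i). Collecting everything: conditions (2) and (3) give (i), conditions (1)$|_{p=t}$ and (4) give (ii), and (1)$|_{p<t}$ gives (iii).

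The calculations are largely bookkeeping, so the one point demanding genuine care is the strict-versus-non-strict dichotomy. The supremum conditions (1) and (4) tolerate the endpoint exponent $E=0$ and hence deliver the non-strict inequalities in (ii) and (iii), whereas the Carleson condition (2) and the cone-integral convergence in (3) both break down precisely at $c=-(n+1)$: there $(1-|z|^2)^{-1}\,dv$ fails to be a Carleson measure and $\int_{\Gamma(\xi)}(1-|z|^2)^{-(n+1)}\,dv$ diverges logarithmically, forcing the strict inequality in (i). Verifying this endpoint behavior, and checking that it aligns with the strict inequality of (i) while the other two cases stay non-strict, is where I expect the main obstacle to lie.
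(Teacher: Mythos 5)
Your proposal is correct and takes essentially the same route as the paper: the paper likewise reduces the inclusion to boundedness of $I_d:\HT^p_{q,\alpha}\to T^t_s(v_{\beta+n})$ via the closed graph theorem, computes $\widehat{(v_{\beta+n})_r}(z)\asymp(1-|z|^2)^{\beta-\alpha}$, and reads the result off the four alternatives of Theorem \ref{main}(c). Your case-by-case exponent bookkeeping, including the strict-versus-non-strict endpoint dichotomy, matches the paper's argument, which writes out only the representative case $p>t$, $q>s$.
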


With Corollary \ref{incl} in hand, it is possible to determine the nonlinear superposition operators between Hardy type tent spaces. Given a function $\varphi$ on the complex plane $\C$, the nonlinear superposition operator $S_{\varphi}$ is defined for complex-valued functions $f$ by
$$S_{\varphi}f=\varphi\circ f.$$
Given two function spaces $X$ and $Y$, a natural question is to find the functions $\varphi$ such that the operators $S_{\varphi}$ map $X$ into $Y$. The theory of superposition operators has a long history in the context of real-valued functions; see \cite{AZ}. Since the pioneering works of C\'{a}mera and Gim\'{e}nez \cite{Ca95,CaG94}, the superposition operators on spaces of analytic functions have drawn lots of attention; see \cite{AMV,DoGi,Me22,SW} and the references therein. However, as far as we know, it seems that the superposition operators have not been studied in the setting of several complex variables. We here invoke Corollary \ref{incl} to characterize the superposition operators between Hardy type tent spaces as below.

\begin{theorem}\label{super}
Let $0<p,q,s,t<\infty$ and $\alpha,\beta>-n-1$, and let $\varphi$ be a function on $\C$. Then the superposition operator $S_{\varphi}$ maps $\HT^{p}_{q,\alpha}$ into $\HT^{t}_{s,\beta}$ if and only if $\varphi$ is a polynomial of degree $N$, where $N$ satisfies
\begin{enumerate}
	\item [(i)] $N\leq\frac{pq(t(n+1+\beta)+ns)}{st(p(n+1+\alpha)+nq)}$ in the case of $p(n+1+\alpha)/q<t(n+1+\beta)/s$;
	\item [(ii)] $N\leq\frac{q(n+1+\beta)}{s(n+1+\alpha)}$ in the case of $p(n+1+\alpha)/q\geq t(n+1+\beta)/s$ and $\alpha\leq\beta$;
	\item [(iii)] $N<\frac{q(n+1+\beta)}{s(n+1+\alpha)}$ in the case of $p(n+1+\alpha)/q\geq t(n+1+\beta)/s$ and $\alpha>\beta$.
\end{enumerate}
Moreover, if $S_{\varphi}$ maps $\HT^{p}_{q,\alpha}$ into $\HT^{t}_{s,\beta}$, then it is bounded and continuous.
\end{theorem}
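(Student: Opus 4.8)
The plan is to reduce the whole problem to the inclusion of Corollary \ref{incl} via the elementary scaling identity
$$\|f^k\|_{\HT^t_{s,\beta}}=\|f\|_{\HT^{kt}_{ks,\beta}}^{\,k},\qquad f\in\h(\B_n),\ k\in\N,$$
which is immediate from the definition of the tent quasi-norm, since $|f^k|^s=|f|^{ks}$ and $(kt)/(ks)=t/s$. Thus $S_{w^k}$ maps $\HT^p_{q,\alpha}$ into $\HT^t_{s,\beta}$ if and only if $\HT^p_{q,\alpha}\subset\HT^{kt}_{ks,\beta}$. I would first determine the critical degree $N_0:=\sup\{N>0:\HT^p_{q,\alpha}\subset\HT^{Nt}_{Ns,\beta}\}$ by substituting $(t,s)\mapsto(Nt,Ns)$ into Corollary \ref{incl}. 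A direct computation turns the inequalities there into the three numerical bounds stated: the dichotomy $p(n+1+\alpha)/q<t(n+1+\beta)/s$ versus its reverse is exactly the condition deciding whether the critical $N$ is attained in regime (iii) or in regimes (i)/(ii) of Corollary \ref{incl}, because $\frac{pq(t(n+1+\beta)+ns)}{st(p(n+1+\alpha)+nq)}>p/t$ is equivalent to $p(n+1+\alpha)/q<t(n+1+\beta)/s$; and, in the reverse case, the split $\alpha\le\beta$ versus $\alpha>\beta$ decides at the endpoint $N=\frac{q(n+1+\beta)}{s(n+1+\alpha)}$ whether one lands in the closed regime (ii) or the open regime (i), which explains the ``$\le$'' of case (ii) against the ``$<$'' of case (iii).

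Next I would dispose of holomorphy. Since both spaces consist of holomorphic functions and, because $\alpha>-n-1$, contain every bounded holomorphic function, for each $w_0\in\C$ and small $\varepsilon>0$ the map $z\mapsto w_0+\varepsilon z_1$ lies in $\HT^p_{q,\alpha}$. If $S_\varphi$ maps $\HT^p_{q,\alpha}$ into $\HT^t_{s,\beta}$, then $\varphi(w_0+\varepsilon z_1)$ must be holomorphic in $z$, so $\varphi$ is holomorphic on $\{w:|w-w_0|<\varepsilon\}$; letting $w_0,\varepsilon$ vary shows $\varphi$ is entire, and I write $\varphi(w)=\sum_{k\ge0}a_kw^k$. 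For sufficiency, suppose $\varphi$ is a polynomial of degree $N$ obeying the stated bound, i.e.\ $N\le N_0$ with the correct endpoint convention. The family $\{\HT^{Nt}_{Ns,\beta}\}_N$ is strictly nested decreasing in $N$ (again by Corollary \ref{incl} with $\alpha=\beta$), so $\HT^p_{q,\alpha}\subset\HT^{kt}_{ks,\beta}$ for every $0\le k\le N$; the scaling identity then gives $\|f^k\|_{\HT^t_{s,\beta}}=\|f\|_{\HT^{kt}_{ks,\beta}}^{k}\lesssim\|f\|_{\HT^p_{q,\alpha}}^{k}$. Summing the finitely many terms of $\varphi\circ f=\sum_{k=0}^N a_kf^k$ in the quasi-Banach space $\HT^t_{s,\beta}$ yields $S_\varphi f\in\HT^t_{s,\beta}$ together with $\|S_\varphi f\|_{\HT^t_{s,\beta}}\lesssim\sum_{k=0}^N|a_k|\,\|f\|_{\HT^p_{q,\alpha}}^{k}$; this polynomial control gives boundedness on bounded sets, and continuity follows from the same estimate applied after the telescoping $f^k-g^k=(f-g)\sum_{j=0}^{k-1}f^jg^{k-1-j}$ and a Hölder-type splitting in the tent quasi-norm.

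The substance is the necessity of the degree bound, which I would obtain by proving that a nonzero coefficient $a_k$ forces $\HT^p_{q,\alpha}\subset\HT^{kt}_{ks,\beta}$, hence $k\le N_0$; this simultaneously rules out transcendental $\varphi$ and enforces $\deg\varphi\le N_0$, matching cases (i)--(iii). To isolate a single coefficient I would exploit the rotation invariance $\|e^{i\theta}f\|_{\HT^p_{q,\alpha}}=\|f\|_{\HT^p_{q,\alpha}}$ together with the Fourier extraction
$$a_kf^k(z)=\frac{1}{2\pi}\int_0^{2\pi}\varphi\!\left(e^{i\theta}f(z)\right)e^{-ik\theta}\,d\theta,$$
so that $a_kf^k$ is an $\HT^t_{s,\beta}$-average of the functions $\varphi\circ(e^{i\theta}f)\in\HT^t_{s,\beta}$; controlling this average in the quasi-norm is where Khinchine--Kahane's inequalities enter, replacing the circular average by random signs so as to defeat the cancellation among the summands $a_jf^j$. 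I expect this to be the main obstacle, for two reasons. First, the naive point-evaluation test, inserting extremal functions for $|f(z)|\lesssim\|f\|_{\HT^p_{q,\alpha}}(1-|z|^2)^{-(\frac{n+1+\alpha}{q}+\frac{n}{p})}$, only reproduces the exponent $\frac{pq(t(n+1+\beta)+ns)}{st(p(n+1+\alpha)+nq)}$ of case (i) and is strictly too weak in cases (ii)/(iii); the sharp obstruction there must instead be built from functions spread over a Korányi region, namely the extremal (lacunary) functions witnessing the sharpness of the inclusion in Corollary \ref{incl}. Second, arranging these disjointly supported bumps so that no cancellation can rescue $\varphi\circ f$ back into $\HT^t_{s,\beta}$, and making the vector-valued circular average rigorous in the genuinely quasi-Banach range $\min(s,t)<1$, are the delicate points that the argument must handle carefully.
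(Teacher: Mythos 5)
Your sufficiency half is essentially the paper's: the scaling identity $\|f^k\|_{\HT^t_{s,\beta}}=\|f\|^k_{\HT^{kt}_{ks,\beta}}$ reduces monomials to the inclusion $\HT^p_{q,\alpha}\subset\HT^{kt}_{ks,\beta}$, and your case bookkeeping from Corollary \ref{incl} is correct (including the observation that $\frac{pq(t(n+1+\beta)+ns)}{st(p(n+1+\alpha)+nq)}>p/t$ iff $p(n+1+\alpha)/q<t(n+1+\beta)/s$, which is exactly how the paper's Proposition \ref{moni} sorts the regimes). Your entirety argument via $z\mapsto w_0+\varepsilon z_1$ matches the paper's Lemma \ref{poly}, and your telescoping-plus-H\"older continuity argument is a legitimate self-contained alternative to the paper's citation of Boyd--Rueda. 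But the necessity direction --- the substance of the theorem, as you say yourself --- is not proved. The Fourier extraction $a_kf^k=\frac{1}{2\pi}\int_0^{2\pi}\varphi(e^{i\theta}f)e^{-ik\theta}\,d\theta$ has two unrepaired defects. First, $S_\varphi$ is only assumed to \emph{map} $\HT^p_{q,\alpha}$ into $\HT^t_{s,\beta}$, so $\sup_\theta\|\varphi(e^{i\theta}f)\|_{\HT^t_{s,\beta}}<\infty$ is not available, and even granting it, in the quasi-Banach range $\min(s,t)<1$ an average of $m$ terms is only controlled by $m^{1/r-1}$ times the largest summand under the $r$-triangle inequality, which diverges as the Riemann sums refine; the circular average of an $\HT^t_{s,\beta}$-valued function need not lie in $\HT^t_{s,\beta}$. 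Second, Khinchine--Kahane randomizes signs of summands you are free to choose; it cannot isolate a fixed Taylor coefficient of a fixed $\varphi$ --- roots-of-unity averaging only yields $\sum_{j\equiv k\,(\mathrm{mod}\,m)}a_jf^j$, which for transcendental $\varphi$ never collapses to a single term. So neither polynomiality nor the degree bound is established by your plan.

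The paper circumvents extraction entirely, and in two ways that are worth internalizing. For polynomiality (Lemma \ref{poly}): test on $g_{\zeta}(z)=(1-\langle z,\zeta\rangle)^{-\theta}$ with $\theta$ just below the critical value $\frac{n+1+\alpha}{q}+\frac{n}{p}$ (Lemma \ref{test-}), use the rotation trick ($\varphi(au)$ also induces a superposition operator for $|a|=1$) to normalize $\arg u_j\to0$, place $w_j=(1-u_j^{-1/\theta})\zeta$ in a Stolz domain so that $g_\zeta(w_j)=u_j$, and invoke the pointwise growth bound $|h(z)|\lesssim(1-|z|^2)^{-\frac{n+1+\beta}{s}-\frac{n}{t}}$ for $h\in\HT^t_{s,\beta}$. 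Your diagnosis that this point-evaluation test is ``strictly too weak'' in cases (ii)/(iii) is a misreading of where sharpness must come from: writing $A=\frac{n+1+\alpha}{q}$, $B=\frac{n+1+\beta}{s}$, one checks $\frac{p(tB+n)}{t(pA+n)}\geq\frac{B}{A}$ exactly when $pA\geq tB$, so the crude case-(i) exponent dominates the bounds of cases (ii)/(iii) there, and crude growth suffices to conclude that $\varphi$ is a polynomial in \emph{all} cases. The sharp endpoint behavior is then recovered algebraically, not analytically: by the comparison principle (Lemma \ref{comp}), if $\varphi$ has degree $N$ then $|u|^N\leq C|\varphi(u)|$ for $|u|\geq R$, and since $v_\beta(\Gamma(\xi))\asymp v_{n+\beta}(\B_n)<\infty$ the low-level set contributes a constant, giving $\|S_{u^N}f\|_{\HT^t_{s,\beta}}\lesssim\|S_\varphi f\|_{\HT^t_{s,\beta}}+1$; hence the monomial $u^N$ maps, and Proposition \ref{moni} (i.e.\ Corollary \ref{incl}, whose sharpness Theorem \ref{main} already carries) pins down the degree. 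No lacunary extremal functions and no linear coefficient isolation are needed; the missing idea in your proposal is precisely this pointwise domination of the top monomial by $|\varphi|+1$.
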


Since the Bergman space $A^p_{\alpha}$ is exactly $\HT^p_{p,\alpha-n}$ for $\alpha>-1$, we obtain the following description of superposition operators between Bergman spaces over the unit ball $\B_n$, which generalizes \cite[Theorem 1]{CaG94} to the setting of higher dimensions.

\begin{corollary}
Let $\alpha,\beta>-1$ and $0<p,q<\infty$, and let $\varphi$ be a function on $\C$. Then the superposition operator $S_{\varphi}$ maps $A^p_{\alpha}$ into $A^q_{\beta}$ if and only if $\varphi$ is a polynomial of degree $N$, where $N$ satisfies
\begin{enumerate}
	\item [(i)] $N\leq\frac{p(n+1+\beta)}{q(n+1+\alpha)}$ in the case $\alpha\leq\beta$;
	\item [(ii)] $N<\frac{p(\beta+1)}{q(\alpha+1)}$ in the case $\alpha>\beta$.
\end{enumerate}
Moreover, if $S_{\varphi}$ maps $A^p_{\alpha}$ into $A^q_{\beta}$, then it is bounded and continuous.
\end{corollary}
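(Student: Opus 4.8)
The plan is to obtain this corollary as a direct specialization of Theorem~\ref{super}, using the identification $A^p_\alpha=\HT^p_{p,\alpha-n}$ recorded in the introduction. Since the corollary assumes $\alpha,\beta>-1$, the shifted weights satisfy $\alpha-n,\beta-n>-n-1$, so the spaces $\HT^p_{p,\alpha-n}$ and $\HT^q_{q,\beta-n}$ are well-defined and Theorem~\ref{super} applies. Concretely, I would invoke Theorem~\ref{super} with its source space $\HT^p_{q,\alpha}$ taken to be $\HT^p_{p,\alpha-n}$ and its target space $\HT^t_{s,\beta}$ taken to be $\HT^q_{q,\beta-n}$; that is, I replace the theorem's six parameters $(p,q,\alpha,t,s,\beta)$ by $(p,p,\alpha-n,q,q,\beta-n)$, where from now on $(p,q,\alpha,\beta)$ denote the parameters of the corollary. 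It then remains only to simplify the resulting case conditions and degree bounds.

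First I would reduce the trichotomy. Under the substitution, $n+1+(\alpha-n)=\alpha+1$ and $n+1+(\beta-n)=\beta+1$, so the theorem's comparison of $p(n+1+\alpha)/q$ with $t(n+1+\beta)/s$ becomes the comparison of
$$\frac{p(\alpha+1)}{p}=\alpha+1\quad\text{with}\quad\frac{q(\beta+1)}{q}=\beta+1;$$
hence the three cases are governed solely by the sign of $\alpha-\beta$. Tracing through: the theorem's case (i) becomes $\alpha<\beta$; its cases (ii) and (iii) both require $p(n+1+\alpha)/q\geq t(n+1+\beta)/s$, i.e.\ $\alpha\geq\beta$, and are separated by the sign of the theorem's weight difference, which here equals $(\alpha-n)-(\beta-n)=\alpha-\beta$. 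Thus case (ii) forces $\alpha=\beta$ and case (iii) gives $\alpha>\beta$.

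Next I would simplify the degree bounds by cancellation. In case (i) the bound $N\leq\frac{pq(t(n+1+\beta)+ns)}{st(p(n+1+\alpha)+nq)}$ collapses to
$$N\leq\frac{p^2q(n+1+\beta)}{q^2p(n+1+\alpha)}=\frac{p(n+1+\beta)}{q(n+1+\alpha)},$$
which is the corollary's bound in case (i) for $\alpha<\beta$. In case (ii) the bound $N\leq\frac{q(n+1+\beta)}{s(n+1+\alpha)}$ becomes $N\leq\frac{p(\beta+1)}{q(\alpha+1)}$, and since $\alpha=\beta$ here this equals $p/q$, agreeing with the corollary's case (i) expression $\frac{p(n+1+\beta)}{q(n+1+\alpha)}$ evaluated at $\alpha=\beta$. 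Hence the theorem's first two cases glue into the single corollary case (i), valid for all $\alpha\leq\beta$. Finally, in case (iii) the strict bound $N<\frac{q(n+1+\beta)}{s(n+1+\alpha)}$ becomes $N<\frac{p(\beta+1)}{q(\alpha+1)}$, which is precisely the corollary's case (ii).

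These are purely arithmetic reductions, so I anticipate no genuine difficulty beyond bookkeeping; the only point requiring care is to keep the corollary's indices distinct from the theorem's and to verify that the theorem's cases (i) and (ii) match consistently at the boundary $\alpha=\beta$, which they do since both yield $N\leq p/q$ there. The concluding assertion that $S_\varphi$ is then bounded and continuous is inherited verbatim from the corresponding statement in Theorem~\ref{super}.
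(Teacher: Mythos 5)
Your proposal is correct and is exactly the paper's intended argument: the paper derives this corollary immediately from Theorem \ref{super} via the identification $A^p_{\alpha}=\mathcal{HT}^p_{p,\alpha-n}$, i.e.\ the substitution $(p,q,\alpha,t,s,\beta)\mapsto(p,p,\alpha-n,q,q,\beta-n)$, and your arithmetic reductions of the three cases (including the observation that the theorem's cases (i) and (ii) glue into the single bound $N\leq\frac{p(n+1+\beta)}{q(n+1+\alpha)}$ for $\alpha\leq\beta$, with agreement $N\leq p/q$ at $\alpha=\beta$) are all accurate.
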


This paper is organized as follows. In Section \ref{Pre}, we display some background and preliminary results. Section \ref{proof} is devoted to the proof of Theorem \ref{main}. In Section \ref{compact}, we characterize the compactness of the embedding $I_d:\HT^p_{q,\alpha}\to T^t_s(\mu)$. Finally, in Section \ref{su}, we prove Theorem \ref{super}.

Throughout the paper, the notation $A\lesssim B$ (or $B\gtrsim A$) means that  $A\leq CB$ for some inessential constant $C>0$. If $A\lesssim B\lesssim A$, then we write $A\asymp B$. For $1\leq p<\infty$, we use $p'$ to denote the conjugate exponent of $p$, i.e. $\frac{1}{p}+\frac{1}{p'}=1$. For a subset $E\subset\B_n$, $\chi_E$ denotes its characteristic function. For any $\varrho\in(0,1)$, we write $\varrho\B_n=\{z:|z|<\varrho\}$ and $(\varrho\B_n)^c=\B_n\setminus\varrho\B_n$.

\section{Preliminaries}\label{Pre}

In this section, we introduce some preliminary results that will be used throughout the paper.

\subsection{Carleson measures}
For $\xi\in\mathbb{S}_n$ and $\delta>0$, the non-isotropic metric ball $B_{\delta}(\xi)$ is defined by
$$B_{\delta}(\xi)=\{z\in\mathbb{B}_n:|1-\langle z,\xi\rangle|<\delta\}.$$
A positive Borel measure $\mu$ on $\mathbb{B}_n$ is said to be a Carleson measure if there is a constant $C>0$ such that
$$\mu(B_{\delta}(\xi))\leq C\delta^n  $$
for all $\xi\in\mathbb{S}_n$ and $\delta>0$. We denote by $\|\mu\|_{CM}$ the infimum of all possible $C$ above. Obviously every Carleson measure is finite. The famous Carleson measure embedding theorem \cite{Car0, Car1}, extended to several complex variables by H\"{o}rmander \cite{H}, asserts that for $0<p<\infty$, the embedding $I_d:H^p\to L^p(\B_n,d\mu)$ is bounded if and only if $\mu$ is a Carleson measure.  It is also known (see \cite[Theorem 45]{ZZ}) that $\mu$ is a Carleson measure if and only if for each (or some) $\theta>0$,
\begin{equation}\label{CM}
\sup_{a\in\mathbb{B}_n}\int_{\mathbb{B}_n}\frac{(1-|a|^2)^{\theta}}{|1-\langle z,a\rangle|^{n+\theta}}d\mu(z)<\infty.
\end{equation}
Moreover, with constant depending on $t$, the supremum of the above integral is comparable to $\|\mu\|_{CM}$.

A positive Borel measure $\mu$ on $\mathbb{B}_n$ is called a vanishing Carleson measure if
$$\lim_{\delta\rightarrow0}\frac{\mu(B_{\delta}(\xi))}{\delta^n}=0$$
uniformly for $\xi\in\mathbb{S}_n$. Equivalently, one may require that for each (or some) $\theta>0$,
\begin{equation}\label{VCM}
\lim_{|a|\rightarrow1^-}\int_{\mathbb{B}_n}\frac{(1-|a|^2)^{\theta}}{|1-\langle z,a\rangle|^{n+\theta}}d\mu(z)=0.
\end{equation}

We will need the following elementary characterization for vanishing Carleson measures.

\begin{lemma}\label{rVCM}
Let $\mu$ be a Carleson measure on $\B_n$. Then $\mu$ is a vanishing Carleson measure if and only if
$$\lim_{\varrho\to1^-}\|\chi_{(\varrho\mathbb{B}_n)^c}\mu\|_{CM}=0.$$
\end{lemma}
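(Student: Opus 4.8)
The plan is to reduce everything to one geometric observation: if $z\in B_\delta(\xi)$ then
$$1-|z|^2\le 2|1-\langle z,\xi\rangle|<2\delta,$$
because $|1-\langle z,\xi\rangle|\ge 1-|z|\ge\tfrac12(1-|z|^2)$. Thus whenever $2\delta\le 1-\varrho^2$ the entire ball $B_\delta(\xi)$ lies inside the shell $(\varrho\B_n)^c$. Note also that $\chi_{(\varrho\B_n)^c}\mu\le\mu$, so each restricted measure is again Carleson and $\|\chi_{(\varrho\B_n)^c}\mu\|_{CM}$ is nonincreasing in $\varrho$; hence the limit in the statement exists.

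For the implication ``$\Leftarrow$'', suppose $\|\chi_{(\varrho\B_n)^c}\mu\|_{CM}\to 0$. Given $\varepsilon>0$, fix $\varrho$ with $\|\chi_{(\varrho\B_n)^c}\mu\|_{CM}<\varepsilon$ and set $\delta_0=(1-\varrho^2)/2$. For every $\xi\in\s_n$ and every $\delta<\delta_0$ the observation above gives $B_\delta(\xi)\subset(\varrho\B_n)^c$, whence $\mu(B_\delta(\xi))=(\chi_{(\varrho\B_n)^c}\mu)(B_\delta(\xi))\le\varepsilon\delta^n$. Since $\delta_0$ is independent of $\xi$, this is exactly the uniform vanishing condition, so $\mu$ is a vanishing Carleson measure.

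For the harder implication ``$\Rightarrow$'', suppose $\mu$ is a vanishing Carleson measure and fix $\varepsilon>0$; choose $\delta_1>0$ with $\mu(B_\delta(\eta))\le\varepsilon\delta^n$ for all $\eta\in\s_n$ and all $\delta<\delta_1$. I would estimate $(\chi_{(\varrho\B_n)^c}\mu)(B_\delta(\xi))=\mu\big(B_\delta(\xi)\cap(\varrho\B_n)^c\big)$ for $\varrho$ close to $1$, distinguishing two ranges of $\delta$. When $\delta<\delta_1$ the vanishing hypothesis applies directly. The main case is $\delta\ge\delta_1$, with $\varrho$ so close to $1$ that $\delta':=1-\varrho^2$ satisfies $2\delta'<\delta_1$ (in particular $\delta'\le\delta$). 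For $z\in B_\delta(\xi)\cap(\varrho\B_n)^c$ one has $1-|z|\le 1-|z|^2\le\delta'$, so writing $\eta=z/|z|$ the elementary bound $|\langle z-\eta,\xi\rangle|\le|z-\eta|=1-|z|\le\delta'$ places $\eta$ in the surface ball $\{\zeta\in\s_n:|1-\langle\zeta,\xi\rangle|<2\delta\}$. Covering that surface ball by $M\asymp(\delta/\delta')^n$ surface balls of radius $\delta'$ centered at points $\xi_1,\dots,\xi_M\in\s_n$, the same triangle inequality shows $z\in B_{2\delta'}(\xi_j)$ for some $j$, so $B_\delta(\xi)\cap(\varrho\B_n)^c\subset\bigcup_{j=1}^M B_{2\delta'}(\xi_j)$. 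Since $2\delta'<\delta_1$, the vanishing estimate applies to each $B_{2\delta'}(\xi_j)$, and summing gives
$$\mu\big(B_\delta(\xi)\cap(\varrho\B_n)^c\big)\le\sum_{j=1}^M\mu\big(B_{2\delta'}(\xi_j)\big)\lesssim M\,\varepsilon(\delta')^n\asymp\varepsilon\,\delta^n.$$
Taking the supremum over $\xi\in\s_n$ and $\delta>0$ yields $\|\chi_{(\varrho\B_n)^c}\mu\|_{CM}\lesssim\varepsilon$ for all $\varrho$ sufficiently close to $1$, and letting $\varepsilon\to 0$ proves the limit.

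I expect the covering step to be the only genuine obstacle: one must produce a finite family $\{\xi_j\}$ of the correct cardinality $(\delta/\delta')^n$ covering the relevant surface ball and verify the inclusion into the dilated balls $B_{2\delta'}(\xi_j)$. This is handled by the standard doubling and bounded-overlap properties of the nonisotropic quasi-metric $(z,w)\mapsto|1-\langle z,w\rangle|^{1/2}$, together with the comparability $\sigma(\{\zeta\in\s_n:|1-\langle\zeta,\xi\rangle|<\delta\})\asymp\delta^n$; everything else reduces to the elementary inclusion recorded at the outset.
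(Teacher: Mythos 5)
Your proof is correct, and it takes a genuinely different route from the paper in both directions. For the implication ``limit $=0$ $\Rightarrow$ vanishing'', the paper works with the kernel characterizations \eqref{CM} and \eqref{VCM}: it splits the integral $\int_{\B_n}\frac{(1-|a|^2)^{\theta}}{|1-\langle z,a\rangle|^{n+\theta}}d\mu(z)$ over $\varrho_0\B_n$ and its complement, kills the compact part as $|a|\to1^-$, and bounds the tail by $\|\chi_{(\varrho_0\B_n)^c}\mu\|_{CM}$; you instead argue directly from the ball-based definition via the elementary inclusion $B_{\delta}(\xi)\subset(\varrho\B_n)^c$ for $2\delta\le 1-\varrho^2$, which is cleaner and bypasses the kernel machinery entirely. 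For the converse (vanishing $\Rightarrow$ limit $=0$), the paper does not prove it at all but cites \cite[Lemma 3.1]{CPW}, whereas you supply a self-contained argument: the case split at $\delta_1$, the observation that points of $B_{\delta}(\xi)\cap(\varrho\B_n)^c$ project to the surface ball of radius $2\delta$ with $|z-z/|z||=1-|z|\le\delta'$, and the covering of that surface ball by $M\asymp(\delta/\delta')^n$ balls of radius $\delta'$ so that $B_{\delta}(\xi)\cap(\varrho\B_n)^c\subset\bigcup_j B_{2\delta'}(\xi_j)$ with $2\delta'<\delta_1$, yielding $\mu(B_{\delta}(\xi)\cap(\varrho\B_n)^c)\lesssim M\varepsilon(\delta')^n\asymp\varepsilon\delta^n$ uniformly in $\xi$ and $\delta$. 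I checked the arithmetic ($1-|z|\le 1-|z|^2<\delta'$, the two triangle-inequality steps, and the uniformity of the dimensional constant) and it all goes through; the covering step you flag is indeed standard for the nonisotropic quasi-metric with $\sigma(B_{\delta}(\xi)\cap\s_n)\asymp\delta^n$. What each approach buys: the paper's version is shorter given that \eqref{CM}--\eqref{VCM} are already on the table and the hard direction is outsourced; yours makes the lemma independent of the external reference \cite{CPW} and of the kernel characterization, at the modest cost of invoking a routine covering lemma.
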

\begin{proof}
The necessity can be found in \cite[Lemma 3.1]{CPW}. Suppose now that
$$\lim_{\varrho\to1^-}\|\chi_{(\varrho\mathbb{B}_n)^c}\mu\|_{CM}=0.$$
Then for any $\epsilon>0$, there exists $\varrho_0\in(0,1)$ such that $\|\chi_{(\varrho_0\mathbb{B}_n)^c}\mu\|_{CM}<\epsilon$. Consequently, for $\theta>0$ and any $a\in\B_n$,
\begin{align*}
\int_{\B_n}\frac{(1-|a|^2)^{\theta}}{|1-\langle z,a\rangle|^{n+\theta}}d\mu(z)
&=\left(\int_{\varrho_0\B_n}+\int_{(\varrho_0\B_n)^c}\right)\frac{(1-|a|^2)^{\theta}}{|1-\langle z,a\rangle|^{n+\theta}}d\mu(z)\\
&\lesssim\frac{(1-|a|^2)^{\theta}}{(1-\varrho_0)^{n+\theta}}\mu(\B_n)+\|\chi_{(\varrho_0\mathbb{B}_n)^c}\mu\|_{CM},
\end{align*}
which implies that
$$\limsup_{|a|\to1^-}\int_{\B_n}\frac{(1-|a|^2)^{\theta}}{|1-\langle z,a\rangle|^{n+\theta}}d\mu(z)
\lesssim\|\chi_{(\varrho_0\mathbb{B}_n)^c}\mu\|_{CM}<\epsilon.$$
The arbitrariness of $\epsilon$ together with the characterization \eqref{VCM} gives that $\mu$ is a vanishing Carleson measure.
\end{proof}

\subsection{Area methods and some inclusions}
Recall that $D(z,\delta)$ is the Bergman metric ball centered at $z\in\B_n$ with the radius $\delta>0$, and the Kor\'{a}nyi region $\Gamma_{\gamma}(\xi)$ is defined by
$$\Gamma_{\gamma}(\xi)=\left\{z\in\mathbb{B}_n:|1-\langle z,\xi\rangle|<\frac{\gamma}{2}(1-|z|^2)\right\},$$
where $\xi\in\mathbb{S}_n$ and $\gamma>1$. It is known that for every $\delta>0$ and $\gamma>1$, there exists $\gamma'>1$ so that
\begin{equation*}
\bigcup_{z\in\Gamma_{\gamma}(\xi)}D(z,\delta)\subset\Gamma_{\gamma'}(\xi).
\end{equation*}
We will write $\widetilde{\Gamma}(\xi)$ to indicate this change of aperture, and the aperture of $\widetilde{\Gamma}(\xi)$ may change from one occurrence to the next.

For $z\in\B_n$, write $I(z)=\{\xi\in\mathbb{S}_n:z\in\Gamma(\xi)\}$. Then $\sigma(I(z))\asymp(1-|z|^2)^n$, and it follows from Fubini's theorem that
\begin{equation}\label{EqG}
\int_{\mathbb{B}_n}\varphi(z)d\nu(z)\asymp
              \int_{\mathbb{S}_n}\biggl(\int_{\Gamma(\xi)}\varphi(z)\frac{d\nu(z)}{(1-|z|^2)^n}\biggr)d\sigma(\xi),
\end{equation}
where $\varphi$ is a positive measurable function and $\nu$ is a finite positive measure on $\B_n$. An immediate consequence of this fact is that $\HT^p_{p,\alpha}=A^p_{n+\alpha}$ for all $0<p<\infty$ and $\alpha>-n-1$.

We will need the following inclusions between Hardy type tent spaces and Bergman spaces, which can be found in \cite[Lemma 3.1]{WZ21} and \cite[Lemma 2.4]{WZ22}.

\begin{lemma}\label{inclu}
Let $\alpha>-n-1$, $0<q<\infty$ and $0<p<t<\infty$. Then
$$\HT_{q,\alpha}^p\subset A^t_{\left(\frac{t}{p}-1\right)n-1+\frac{t(n+1+\alpha)}{q}}$$
and
$$A^p_{\left(\frac{p}{t}-1\right)n-1+\frac{p(n+1+\alpha)}{q}}\subset\HT_{q,\alpha}^t,$$
where both inclusions are bounded.
\end{lemma}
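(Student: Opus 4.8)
The plan is to reduce both inclusions to two well-understood ingredients: a \emph{point-evaluation splitting} of the Bergman integral, and a \emph{change-of-inner-exponent principle} stating that, for holomorphic $f$, the inner exponent $q$ of $\HT^{\rho}_{q,\alpha}$ may be altered as long as the weight per inner power $\kappa:=\frac{n+1+\alpha}{q}$ is kept fixed. Throughout I would abbreviate the area integral by $A_qf(\xi):=\big(\int_{\Gamma(\xi)}|f|^q(1-|z|^2)^{\alpha}\,dv\big)^{1/q}$, so that $\|f\|_{\HT^p_{q,\alpha}}=\|A_qf\|_{L^p(\s_n)}$, and write $N_{\kappa}f(\xi):=\sup_{z\in\Gamma(\xi)}|f(z)|(1-|z|^2)^{\kappa}$ for the weighted non-tangential maximal function.

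First I would record the sub-mean value estimate. Since $|f|^q$ is plurisubharmonic and $(1-|w|^2)^{\alpha}\asymp(1-|z|^2)^{\alpha}$ on $D(z,r)$, one gets $|f(z)|^{q}(1-|z|^2)^{n+1+\alpha}\lesssim\int_{D(z,r)}|f(w)|^{q}(1-|w|^2)^{\alpha}\,dv(w)$. Choosing $\xi\in I(z)$ (so $D(z,r)\subset\widetilde{\Gamma}(\xi)$) this gives, after averaging over $I(z)$, the sharp point evaluation $|f(z)|\lesssim\|f\|_{\HT^p_{q,\alpha}}(1-|z|^2)^{-\frac np-\kappa}$; and, localizing $I(z)$ in a non-isotropic ball, the pointwise bound $N_{\kappa}f(\xi)^{q}\lesssim M(A_q f^{q})(\xi)$, with $M$ the Hardy–Littlewood maximal operator on $\s_n$.

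Set $b^{*}=\frac np+\kappa$. For the first inclusion, splitting $|f|^{t}=|f|^{t-p}|f|^{p}$ and using the point evaluation on the factor $|f|^{t-p}$ yields
$$\|f\|_{A^t_{\gamma_1}}^{t}\lesssim\|f\|_{\HT^p_{q,\alpha}}^{\,t-p}\int_{\B_n}|f|^{p}(1-|z|^2)^{\beta_0}\,dv,\qquad \beta_0=p\kappa-1,$$
since a direct computation gives $\gamma_1-(t-p)b^{*}=\beta_0$. The remaining integral is $\|f\|_{A^p_{\beta_0}}^{p}$, so it suffices to prove $\HT^p_{q,\alpha}\subset A^p_{\beta_0}=\HT^p_{p,\beta_0-n}$; as $\frac{n+1+(\beta_0-n)}{p}=\kappa$, this is precisely a change of the inner exponent from $q$ to $p$ at the fixed outer exponent $p$. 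The second inclusion I would factor in the mirror way: first the critical Bergman inclusion $A^p_{\gamma_2}\subset A^t_{\gamma_2'}$ with $\gamma_2'=t\kappa-1$, which holds for $p<t$ because both spaces share the point-evaluation order $\frac{n+1+\gamma_2}{p}=\frac{n+1+\gamma_2'}{t}=\frac nt+\kappa$ (and is proved by the same $|f|^{t-p}$-splitting, the leftover integral now being a genuine global Bergman norm with no boundary divergence); then $A^t_{\gamma_2'}=\HT^t_{t,\gamma_2'-n}\subset\HT^t_{q,\alpha}$, again a change of inner exponent, now at the fixed outer exponent $t$.

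It remains to justify the change-of-inner-exponent principle $\|f\|_{\HT^{\rho}_{q_1,\alpha_1}}\asymp\|f\|_{\HT^{\rho}_{q_2,\alpha_2}}$ whenever $\frac{n+1+\alpha_1}{q_1}=\frac{n+1+\alpha_2}{q_2}=\kappa$. Using \eqref{EqG} and a Bergman-metric $r$-lattice $\{a_k\}$, the two-sided sub-mean value estimate identifies the area integral with a discrete sum, giving $\|f\|_{\HT^{\rho}_{q,\alpha}}\asymp\big\|(\sum_{a_k\in\Gamma(\xi)}d_k^{q})^{1/q}\big\|_{L^{\rho}(\s_n)}$ with $d_k=|f(a_k)|(1-|a_k|^2)^{\kappa}$ independent of $q$ except through $\kappa$. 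For $q_1\le q_2$ the pointwise monotonicity $\|\cdot\|_{\ell^{q_2}}\le\|\cdot\|_{\ell^{q_1}}$ gives $\|f\|_{\HT^{\rho}_{q_2,\alpha_2}}\lesssim\|f\|_{\HT^{\rho}_{q_1,\alpha_1}}$ for free. The reverse inequality — passing to a \emph{smaller} inner exponent — is the crux, and according to the ordering of the parameters it is exactly this bound that closes (1a) (when $p<q$) or (2b) (when $q<t$). I would obtain it from the $L^{\rho}$-equivalence, for holomorphic $f$, of $A_qf$ with $N_{\kappa}f$: the direction $N_{\kappa}f\lesssim A_qf$ in $L^{\rho}$ is elementary when $\rho>q$ (from $N_{\kappa}f^{q}\lesssim M(A_qf^{q})$ and boundedness of $M$ on $L^{\rho/q}(\s_n)$), whereas the control of $A_qf$ by $N_{\kappa}f$, and the case $\rho\le q$, rest on the good-$\lambda$ inequality between the Lusin area integral and the non-tangential maximal function of a holomorphic function. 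I expect this to be the main obstacle. I would also stress that holomorphy is genuinely indispensable here: the displayed equivalence is \emph{false} for arbitrary nonnegative arrays $(d_k)$ — an array constant on a fat Carleson tent forces $\|\cdot\|_{\ell^{p}}$ and $\|\cdot\|_{\ell^{q}}$ apart by a logarithmic factor — so no purely combinatorial estimate on the lattice can suffice, and it is precisely subharmonicity that excludes such arrays.
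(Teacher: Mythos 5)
Your reduction has a fatal flaw: the ``change-of-inner-exponent principle'' $\HT^{\rho}_{q_1,\alpha_1}=\HT^{\rho}_{q_2,\alpha_2}$ at fixed $\kappa=\frac{n+1+\alpha_i}{q_i}$ is \emph{false} in the direction you need. Passing to a larger inner exponent (your $\ell^{q_1}\supset\ell^{q_2}$ monotonicity) is indeed true, but the reverse direction --- lowering the inner exponent at equal $\kappa$ --- fails, and this is exactly what closes your case $q>p$ in the first inclusion and $q<t$ in the second. The paper's own Corollary \ref{incl} refutes it: with $p=t$ and $q>s$, case (i) says $\HT^p_{q,\alpha}\subset\HT^p_{s,\beta}$ holds if and only if $\frac{n+1+\alpha}{q}<\frac{n+1+\beta}{s}$ \emph{strictly}; at equality the inclusion fails. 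A concrete counterexample (say $n=1$) is a lacunary series $f(z)=\sum_k c_k 2^{k\kappa}z^{2^k}$: since $|f|$ is essentially radial with $|f(z)|(1-|z|)^{\kappa}\asymp c_k$ at height $1-|z|\asymp 2^{-k}$, and a cone $\Gamma(\xi)$ meets $O(1)$ lattice points per generation, one gets $A_qf(\xi)\asymp\|(c_k)\|_{\ell^q}$ uniformly in $\xi$; choosing $(c_k)\in\ell^{q_1}\setminus\ell^{q_2}$ with $q_2<q_1$ separates the two norms. This also shows your closing remark is wrong-headed: holomorphy does \emph{not} exclude the bad arrays --- lacunary series realize precisely the radially stratified arrays on which the $\ell^{q}$-norms over cones diverge from one another --- so no good-$\lambda$ argument can rescue the step. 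The good-$\lambda$/area--maximal equivalences you invoke concern the Lusin area integral of a \emph{derivative} (as in the Hardy-space identity $g\in H^p\Leftrightarrow Rg\in\HT^p_{2,1-n}$, where cancellation is available), not $A_qf$ of $f$ itself; indeed $\|A_qf\|_{L^\rho}\asymp\|N_\kappa f\|_{L^\rho}$ would imply the $q$-independence just refuted, and already the trivial bound $A_qf\le N_\kappa f\cdot\bigl(\int_{\Gamma(\xi)}\frac{dv}{(1-|z|^2)^{n+1}}\bigr)^{1/q}$ diverges logarithmically.

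What survives of your argument: the point evaluation $|f(z)|\lesssim\|f\|_{\HT^p_{q,\alpha}}(1-|z|^2)^{-\frac np-\kappa}$, the splitting $|f|^t=|f|^{t-p}|f|^p$ with the weight bookkeeping $\gamma_1-(t-p)(\frac np+\kappa)=p\kappa-1$, and the classical Bergman embedding $A^p_{\gamma_2}\subset A^t_{t\kappa-1}$ are all correct; consequently your proof is valid for the first inclusion when $q\le p$ and for the second when $q\ge t$. But in the complementary ranges your intermediate space is genuinely not contained in (resp.\ does not contain) the tent space: e.g.\ for $q>p$ the lacunary $f$ above lies in $\HT^p_{q,\alpha}$ but not in $A^p_{p\kappa-1}$, even though the final conclusion $f\in A^t_{\gamma_1}$ is true --- the $p<t$ slack, which condition (iii) of Corollary \ref{incl} permits at equality, is squandered by routing through two critical $p=t$-type steps. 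Note also that the paper does not prove this lemma at all: it imports it from \cite[Lemma 3.1]{WZ21} and \cite[Lemma 2.4]{WZ22}, whose arguments handle the hard parameter ranges by different (Luecking-type embedding) techniques rather than by an inner-exponent equivalence.
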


\subsection{Tent spaces of sequences}

We will need the discrete version of tent spaces. Before proceeding, we introduce a well-known result on decomposition of the unit ball $\mathbb{B}_n$. By \cite[Theorem 2.23]{Zhuball}, there exists a positive integer $N$ such that for any $0<\delta<1$ we can find a sequence $\{a_k\}\subset\mathbb{B}_n$ with the following properties:
\begin{enumerate}
	\item [(i)] $\mathbb{B}_n=\bigcup_kD(a_k,\delta)$;
	\item [(ii)] The sets $D(a_k,\delta/4)$ are mutually disjoint;
	\item [(iii)] Each point $z\in\mathbb{B}_n$ belongs to at most $N$ of the sets $D(a_k,4\delta)$.
\end{enumerate}
Any sequence $\{a_k\}$ satisfying the above conditions is called a $\delta$-lattice (in the Bergman metric).

Let $Z=\{a_k\}\subset\mathbb{B}_n$ be a $\delta$-lattice. For $0<p,q<\infty$, the tent space $T^p_q(Z)$ consists of those sequences $c=\{c_k\}\subset\C$ with
$$\|c\|^p_{T^p_q(Z)}:=\int_{\mathbb{S}_n}\left(\sum_{a_k\in\Gamma(\xi)}|c_k|^q\right)^{p/q}d\sigma(\xi)<\infty.$$
Analogously, the tent space $T^p_{\infty}(Z)$ consists of those sequences $c=\{c_k\}$ with
$$\|c\|^p_{T^p_{\infty}(Z)}:=\int_{\mathbb{S}_n}\left(\sup_{a_k\in\Gamma(\xi)}|c_k|\right)^pd\sigma(\xi)<\infty.$$
In order to define the tent space $T^{\infty}_q(Z)$, we write $Q(0)=\B_n$ and
$$Q(u):=\left\{z\in\B_n:\left|1-\left\langle z,\frac{u}{|u|}\right\rangle\right|<1-|u|^2\right\}$$
for $u\in\B_n\setminus\{0\}$. The tent space $T^{\infty}_q(Z)$ consists of sequences $c=\{c_k\}$ satisfying
$$\|c\|_{T^{\infty}_q(Z)}:=\mathrm{ess}\sup_{\xi\in\s_n}\left(\sup_{u\in\Gamma(\xi)}\frac{1}{(1-|u|^2)^n}
\sum_{a_k\in Q(u)}|c_k|^q(1-|a_k|^2)^n\right)^{1/q}<\infty.$$
By the definition of Carleson measures, we see that a sequence $c=\{c_k\}$ belongs to $T^{\infty}_q(Z)$ if and only if the measure $d\mu_c=\sum_k|c_k|^q(1-|a_k|^2)^n\delta_{a_k}$ is a Carleson measure, where $\delta_{a_k}$ is the Dirac point mass at $a_k$. Moreover,
$$\|c\|_{T^{\infty}_q(Z)}\asymp\|\mu_c\|^{1/q}_{CM}.$$

We will need the following lemma, which can be found in \cite[Lemma 14]{Per}.

\begin{lemma}\label{test}
Let $\alpha>-n-1$, $0<p,q<\infty$, $Z=\{a_k\}$ be a $\delta$-lattice, and $\theta>n\max\{1,q/p,1/p,1/q\}$. Then the operator $S^{\theta}_Z$, defined by
$$S^{\theta}_Z(\{\lambda_k\})(z):=\sum_{k=1}^{\infty}\lambda_k\frac{(1-|a_k|^2)^{\theta}}{(1-\langle z,a_k\rangle)^{\theta+\frac{n+1+\alpha}{q}}},\quad z\in\B_n,$$
is bounded from $T_q^p(Z)$ into $\HT_{q,\alpha}^p$.
\end{lemma}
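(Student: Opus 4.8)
The plan is to establish the norm inequality $\|S^\theta_Z(\{\lambda_k\})\|_{\HT^p_{q,\alpha}}\lesssim\|\{\lambda_k\}\|_{T^p_q(Z)}$. It is enough to prove this for finitely supported sequences, for which $S^\theta_Z(\{\lambda_k\})$ is patently a well-defined holomorphic function; the general case then follows by density together with the boundedness of point evaluations on $\HT^p_{q,\alpha}$, which holds because by Lemma \ref{inclu} this space embeds boundedly into a Bergman space. Since only an upper bound is wanted, I would replace each $\lambda_k$ by $|\lambda_k|$ and argue with the resulting nonnegative series.

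The first real step is to pass from the discrete sum to a continuous integral operator. Writing $c=\frac{n+1+\alpha}{q}$, I set $h(w):=\sum_k|\lambda_k|(1-|a_k|^2)^{-c}\chi_{D(a_k,\delta)}(w)$. Using that $1-|w|^2\asymp1-|a_k|^2$ and $|1-\langle z,w\rangle|\asymp|1-\langle z,a_k\rangle|$ whenever $w\in D(a_k,\delta)$, together with $v(D(a_k,\delta))\asymp(1-|a_k|^2)^{n+1}$ and the bounded overlap of the lattice, one obtains the pointwise domination
\[
S^\theta_Z(\{|\lambda_k|\})(z)\lesssim\int_{\B_n}\frac{(1-|w|^2)^{\theta'}}{|1-\langle z,w\rangle|^{n+1+\theta'}}\,h(w)\,dv(w)=:P_{\theta'}h(z),\qquad \theta'=\theta+c-(n+1),
\]
where $\theta'>-1$ because $\theta>n$. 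Moreover, a standard comparison between discrete and continuous tent norms (in the spirit of \eqref{EqG}, and using the disjoint balls $D(a_k,\delta/4)$ for the matching lower bound) gives $\|h\|_{T^p_{q,\alpha}}\asymp\|\{\lambda_k\}\|_{T^p_q(Z)}$. Thus the lemma is reduced to the boundedness of the positive Forelli--Rudin operator $P_{\theta'}$ on the tent space $T^p_{q,\alpha}$.

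The crux is therefore the estimate $\|P_{\theta'}h\|_{T^p_{q,\alpha}}\lesssim\|h\|_{T^p_{q,\alpha}}$ under the hypothesis $\theta>n\max\{1,q/p,1/p,1/q\}$. In the diagonal case $p=q$ this is exactly the classical boundedness of $P_{\theta'}$ on the weighted Bergman space $A^p_{n+\alpha}=\HT^p_{p,\alpha}$, which holds for every $0<p<\infty$ once $\theta'$ is large; a direct check shows that the Forelli--Rudin threshold there matches precisely the requirement $\theta>n\max\{1,1/q\}$ coming from the hypothesis. For general indices I would estimate the inner cone integral $\int_{\Gamma(\xi)}(P_{\theta'}h)^q(1-|z|^2)^\alpha\,dv(z)$ by splitting the $w$-integral according to whether $w$ lies in a fixed dilate of $\Gamma(\xi)$, handling the radial ($q$) direction by a Forelli--Rudin/Schur estimate and the tangential ($p$) direction after integrating over $\s_n$; each of the four quantities $1$, $q/p$, $1/p$, $1/q$ is the exponent that keeps one of the resulting H\"older or Schur integrals convergent, which is where the hypothesis on $\theta$ is spent. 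A heavier but more systematic alternative is to verify the bound at the four corner configurations of $(p,q)$ and interpolate, since $P_{\theta'}$ is a single fixed linear operator and the spaces $T^p_{q,\alpha}$ form a complex-interpolation scale.

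I expect the main obstacle to be precisely this last off-diagonal estimate: the contribution to $P_{\theta'}h(z)$ from $w$ outside the cone $\Gamma(\xi)$, where the kernel ceases to localize and one must play the tangential decay of $|1-\langle z,w\rangle|^{-1}$ against the area/Carleson geometry of the Kor\'{a}nyi regions, all while keeping the constants uniform across the four regimes of $(p,q)$. By contrast, the reduction to $P_{\theta'}$ and the discrete-to-continuous comparison are routine, and the diagonal case is classical, so the weight of the proof sits entirely in the tent-space boundedness of the Forelli--Rudin operator.
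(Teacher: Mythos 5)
Your reduction to the positive Forelli--Rudin operator $P_{\theta'}$ contains a genuine gap, and it sits exactly where you place the weight of the proof. The intermediate claim ``$P_{\theta'}$ is bounded on $T^p_{q,\alpha}$'' is not merely left unproved --- it is \emph{false} in the quasi-Banach range $q<1$, which the lemma must cover. Indeed, fix $\xi_0\in\s_n$, let $w_0=\tfrac12\xi_0$, and take
\begin{equation*}
h(w)=|w-w_0|^{-2n}\,\chi_{\{|w-w_0|<\varepsilon\}}(w).
\end{equation*}
Since the real dimension is $2n$ and $2nq<2n$ for $q<1$, one has $\int_{\B_n}h^q\,dv_\alpha<\infty$, hence $h\in T^p_{q,\alpha}$ for every $0<p<\infty$; but $h\notin L^1_{\mathrm{loc}}$, so $P_{\theta'}h(z)\geq c_z\int_{|w-w_0|<\varepsilon}|w-w_0|^{-2n}\,dv(w)=\infty$ for every $z\in\B_n$. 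So $P_{\theta'}$ is not even well defined on $T^p_{q,\alpha}$ when $q<1$. For the same reason your statement that the diagonal case ``is exactly the classical boundedness of $P_{\theta'}$ on $A^p_{n+\alpha}$, which holds for every $0<p<\infty$'' is wrong for $p=q<1$: the classical Forelli--Rudin theorem (with the threshold you computed, matching $\theta>n$) is a theorem about $L^p(dv_{n+\alpha})$ for $p\geq1$ only. The interpolation fallback fares no better: complex interpolation of the quasi-Banach tent scale $T^p_{q,\alpha}$ with $p$ or $q$ below $1$ is not a routine tool, and in any case one endpoint of your ``corner'' scheme would be the false diagonal estimate above.

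What your reduction discards is precisely the saving feature of the problem: the function $h$ you build from a sequence in $T^p_q(Z)$ is \emph{lattice-regularized} (comparable to a constant on each ball $D(a_k,\delta)$), so the local non-integrability that kills $P_{\theta'}$ on general tent functions can never occur. A correct proof must exploit this, which in practice means estimating the discrete sum directly rather than passing through a boundedness statement for $P_{\theta'}$ on all of $T^p_{q,\alpha}$: one bounds $\int_{\Gamma(\xi)}|S^\theta_Z(\lambda)|^q\,dv_\alpha$ by splitting into the cases $q\leq1$ (subadditivity of $u\mapsto u^q$) versus $q>1$ (H\"older against an auxiliary power of $1-|a_k|^2$), and then treats the outer integral in the cases $p\leq q$ versus $p>q$ (the latter by duality/maximal-function arguments), each step using Lemma \ref{FRgeneral} and lattice summation estimates; the four quantities in $\theta>n\max\{1,q/p,1/p,1/q\}$ are exactly the thresholds of these four cases. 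Note also that the paper itself offers no proof to compare against --- it quotes the statement from Per\"al\"a \cite[Lemma 14]{Per}, where the argument is of the direct discrete type just described. Your preliminary steps (reduction to finitely supported sequences via density and bounded point evaluations, the pointwise domination with $\theta'=\theta+\frac{n+1+\alpha}{q}-(n+1)$, and the comparison $\|h\|_{T^p_{q,\alpha}}\asymp\|\lambda\|_{T^p_q(Z)}$ via \eqref{EqG} and bounded overlap) are sound; it is the continuous reformulation of the core estimate that must be abandoned or restricted to regularized functions, at which point it becomes the discrete estimate in disguise.
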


The theory of dual and factorization of sequence tent spaces will play an essential role in the proof of Theorem \ref{main}, which we recall next. The following result about the dual of sequence tent spaces can be found in \cite{Ars,Jev,Lue91}.

\begin{theorem}\label{dual}
Let $Z=\{a_k\}$ be a $\delta$-lattice. If $1\leq p,q<\infty$ with $p+q>2$, then the dual of $T^p_q(Z)$ can be identified with $T^{p'}_{q'}(Z)$ under the pairing
$$\langle c,d\rangle_{T^2_2(Z)}:=\sum_kc_k\overline{d_k}(1-|a_k|^2)^n,
                              \quad c=\{c_k\}, \quad d=\{d_k\}.$$
Moreover, if $0<q<1<p<\infty$, then the dual of $T^p_q(Z)$ can be identified with $T^{p'}_{\infty}(Z)$ under the same pairing.
\end{theorem}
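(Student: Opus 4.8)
The plan is to establish the two continuous inclusions making up each identification, realizing $T^p_q(Z)$ as a complemented piece of a vector-valued Lebesgue space and using an averaging projection to produce the representing sequence. Throughout I would exploit the facts recorded above: $I(z)=\{\xi\in\s_n:z\in\Gamma(\xi)\}$ satisfies $\sigma(I(z))\asymp(1-|z|^2)^n$, together with the Fubini identity behind \eqref{EqG}. For the \emph{easy inclusion}, given $c\in T^p_q(Z)$ and $d$ in the candidate dual, I would write
$$\left|\langle c,d\rangle_{T^2_2(Z)}\right|\le\sum_k|c_k||d_k|(1-|a_k|^2)^n\asymp\int_{\s_n}\sum_{a_k\in\Gamma(\xi)}|c_k||d_k|\,d\sigma(\xi),$$
and then apply Hölder's inequality in the inner $\ell^q$--$\ell^{q'}$ pairing (or, when $q\le1$, the inclusion $\ell^1\subset\ell^q$ to extract $\sup_{a_k\in\Gamma(\xi)}|d_k|$) followed by Hölder in the outer $L^p$--$L^{p'}$ pairing over $\s_n$. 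This yields $|\langle c,d\rangle|\lesssim\|c\|_{T^p_q(Z)}\|d\|$, giving the bounded embedding of $T^{p'}_{q'}(Z)$ (resp.\ $T^{p'}_\infty(Z)$) into $(T^p_q(Z))^*$.

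For the \emph{hard inclusion} in the reflexive range $1<p,q<\infty$, the map $c\mapsto\{c_k\chi_{I(a_k)}(\cdot)\}_k$ embeds $T^p_q(Z)$ isometrically into $L^p(\s_n;\ell^q)$. Given $\Lambda\in(T^p_q(Z))^*$, I would extend it by Hahn--Banach and represent it by some $g\in L^{p'}(\s_n;\ell^{q'})=(L^p(\s_n;\ell^q))^*$, then set
$$d_k:=\frac{1}{(1-|a_k|^2)^n}\int_{I(a_k)}g_k(\xi)\,d\sigma(\xi),$$
so that $\Lambda(c)=\langle c,d\rangle_{T^2_2(Z)}$ for every $c$. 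The crux is the bound $\|d\|_{T^{p'}_{q'}(Z)}\lesssim\|g\|_{L^{p'}(\ell^{q'})}$. Since $I(a_k)$ is comparable to a non-isotropic ball and contains $\eta$ whenever $a_k\in\Gamma(\eta)$, the defining average is dominated by the non-isotropic Hardy--Littlewood maximal function, $|d_k|\lesssim Mg_k(\eta)$; hence $\sum_{a_k\in\Gamma(\eta)}|d_k|^{q'}\le\sum_k(Mg_k(\eta))^{q'}$, and the Fefferman--Stein vector-valued maximal inequality on the space of homogeneous type $(\s_n,\sigma)$ (valid for $1<p',q'<\infty$) closes the estimate. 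Surjectivity is immediate, since applying the averaging to a suitable embedding of any $d\in T^{p'}_{q'}(Z)$ recovers $d$.

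The endpoint cases use the same scheme with a substitute for Fefferman--Stein. When $p=1$ (dual $T^\infty_{q'}(Z)$) the representing object lies in $L^\infty(\s_n;\ell^{q'})$, and I would verify the Carleson condition directly: for a ball $B_\delta(\zeta)$, Jensen's inequality and $\sigma(I(a_k))\asymp(1-|a_k|^2)^n$ reduce $\sum_{a_k\in B_\delta(\zeta)}|d_k|^{q'}(1-|a_k|^2)^n$ to $\int_{\s_n}\sum_{a_k\in B_\delta(\zeta)}\chi_{I(a_k)}(\xi)|g_k(\xi)|^{q'}\,d\sigma(\xi)$; the integrand is at most $\|g\|_{L^\infty(\ell^{q'})}^{q'}$ and is supported (in $\xi$) on $B_{C\delta}(\zeta)$ by the quasi-triangle inequality, so the whole expression is $\lesssim\|g\|_{L^\infty(\ell^{q'})}^{q'}\delta^n$, which is exactly the Carleson description of $T^\infty_{q'}(Z)$. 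When $q=1$ (dual $T^{p'}_\infty(Z)$) the dual of $L^p(\s_n;\ell^1)$ consists of weak-$*$ measurable $\ell^\infty$-valued functions with $\sup_k|g_k|\in L^{p'}$, and here the vector-valued maximal step is replaced by the scalar one via $\sup_k Mg_k\le M(\sup_k|g_k|)$, which is bounded on $L^{p'}$ since $p'>1$; the nontangential description $\|d\|_{T^{p'}_\infty(Z)}=\|\sup_{a_k\in\Gamma(\cdot)}|d_k|\|_{L^{p'}}$ then gives the bound.

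The remaining regime $0<q<1<p$ (again dual $T^{p'}_\infty(Z)$) is where I expect \textbf{the main obstacle}: here $T^p_q(Z)$ is only a quasi-Banach space, $\ell^q$ is not normed, and the soft duality underlying the embedding into $L^p(\s_n;\ell^q)$ breaks down. I would handle it by the factorization theory for sequence tent spaces (in the spirit of Luecking and Cohn--Verbitsky), factoring a generic $c\in T^p_q(Z)$ as a product that moves the inner exponent up to $1$ and thereby reduces the representation problem to the already-settled case $q=1$, while the outer $L^{p'}$ control of the nontangential maximal sequence persists. Assembling the easy and hard inclusions in each of the four regimes completes the identifications and hence the theorem.
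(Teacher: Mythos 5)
You should first note that the paper does not prove Theorem \ref{dual} at all: it is quoted from \cite{Ars,Jev,Lue91}, so your proposal can only be compared with those standard proofs --- and in the Banach range it essentially reproduces them, correctly. The isometric embedding $c\mapsto\{c_k\chi_{I(a_k)}\}_k$ of $T^p_q(Z)$ into $L^p(\s_n;\ell^q)$, the Hahn--Banach extension, the averaging projection $d_k=(1-|a_k|^2)^{-n}\int_{I(a_k)}g_k\,d\sigma$, and the Fefferman--Stein vector-valued maximal inequality on the homogeneous space $(\s_n,\sigma)$ settle $1<p,q<\infty$; your scalar-maximal substitute $\sup_kMg_k\le M(\sup_k|g_k|)$ for $q=1<p$, and your Jensen-plus-quasi-triangle verification of the Carleson condition for the representing sequence when $p=1$, are likewise sound (modulo the harmless discrepancy $\sigma(I(a_k))\asymp(1-|a_k|^2)^n$, which only changes $d$ by bounded multiplicative weights and does not affect the identification).

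Two steps, however, are genuinely incomplete. First, for $p=1$ your ``easy inclusion'' fails as stated: inner H\"{o}lder followed by outer $L^1$--$L^\infty$ H\"{o}lder would require $\mathrm{ess}\sup_{\xi}\bigl(\sum_{a_k\in\Gamma(\xi)}|d_k|^{q'}\bigr)^{1/q'}<\infty$, which is strictly stronger than $d\in T^{\infty}_{q'}(Z)$, whose norm is the Carleson condition of the paper. Indeed, taking $d_k=1$ at the lattice points lying on a single radius ending at $\xi_0$ and $d_k=0$ otherwise, the measure $\sum_k|d_k|^{q'}(1-|a_k|^2)^n\delta_{a_k}$ is Carleson, yet $\sum_{a_k\in\Gamma(\xi)}|d_k|^{q'}\asymp\log\bigl(1/|1-\langle\xi,\xi_0\rangle|\bigr)$ is essentially unbounded; the bound $\bigl|\langle c,d\rangle_{T^2_2(Z)}\bigr|\lesssim\|c\|_{T^1_q(Z)}\|d\|_{T^{\infty}_{q'}(Z)}$ is the nontrivial stopping-time pairing lemma of Coifman--Meyer--Stein type \cite{CMS} and must be imported or proved, not deduced from H\"{o}lder. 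Second, in the regime $0<q<1<p$ you correctly flag the obstacle, but your remedy is a plan rather than an argument: factoring $T^p_q(Z)=T^p_1(Z)\cdot T^{\infty}_{q/(1-q)}(Z)$ via Theorem \ref{factor} and invoking the settled case $q=1$ yields only that $\bigl\|\{b_kd_k\}\bigr\|_{T^{p'}_{\infty}(Z)}\lesssim\|\Lambda\|\cdot\|b\|_{T^{\infty}_{q/(1-q)}(Z)}$ for each \emph{fixed} $b$; to conclude $d\in T^{p'}_{\infty}(Z)$ one must construct, for the given $d$, a selection set $S$ with $\sum_{a_k\in S}(1-|a_k|^2)^n\delta_{a_k}$ a Carleson measure (so that $\chi_S\in T^{\infty}_{q/(1-q)}(Z)$) along which the nontangential supremum $\sup_{a_k\in\Gamma(\xi)}|d_k|$ is essentially attained for a.e.\ $\xi$. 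That stopping-time construction is precisely where the proofs in \cite{Lue91,Jev} do their real work, and it is absent from your sketch.
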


The following result concerning factorization of sequence tent spaces can be found in \cite[Proposition 6]{MPPW}.
\begin{theorem}\label{factor}
Let $0<p,q<\infty$ and $Z=\{a_k\}$ be a $\delta$-lattice. Suppose that $p_0\leq p_1,p_2\leq\infty$, $q_0\leq q_1,q_2\leq\infty$ and the following conditions hold:
\begin{enumerate}
	\item [(i)] $\frac{1}{p_j}+\frac{1}{q_j}>0$ for $j=0,1,2$;
	\item [(ii)] $\frac{1}{p_1}+\frac{1}{p_2}=\frac{1}{p_0}$ and $\frac{1}{q_1}+\frac{1}{q_2}=\frac{1}{q_0}$.
\end{enumerate}
Then
$$T^{p_0}_{q_0}(Z)=T^{p_1}_{q_1}(Z)\cdot T^{p_2}_{q_2}(Z).$$
That is, if $c\in T_{q_1}^{p_1}(Z)$ and $d\in T^{p_2}_{q_2}(Z)$, then $c\cdot d\in T^{p_0}_{q_0}(Z)$ with
$$\|c\cdot d\|_{T^{p_0}_{q_0}(Z)}\lesssim\|c\|_{T^{p_1}_{q_1}(Z)}\cdot\|d\|_{T^{p_2}_{q_2}(Z)};$$
and conversely, if $\lambda\in T^{p_0}_{q_0}(Z)$, then there exist sequences $c\in T_{q_1}^{p_1}(Z)$ and $d\in T^{p_2}_{q_2}(Z)$ such that $\lambda=c\cdot d$, and
$$\|c\|_{T^{p_1}_{q_1}(Z)}\cdot\|d\|_{T^{p_2}_{q_2}(Z)}\lesssim\|\lambda\|_{T^{p_0}_{q_0}(Z)}.$$
\end{theorem}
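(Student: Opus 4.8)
The statement has two halves: the \emph{H\"older half}, that a product of factors from $T^{p_1}_{q_1}(Z)$ and $T^{p_2}_{q_2}(Z)$ lands in $T^{p_0}_{q_0}(Z)$ with the asserted norm control, and the \emph{factorization half}, that every $\lambda\in T^{p_0}_{q_0}(Z)$ splits. The plan is to dispose of the first half by two applications of H\"older's inequality and to concentrate the real effort on the second. For the H\"older half, fix $\xi\in\s_n$ and apply H\"older in the index $k$ with exponents $q_1/q_0$ and $q_2/q_0$ (legitimate since $q_0/q_1+q_0/q_2=1$ and $q_0\le q_1,q_2$), giving
$$\Big(\sum_{a_k\in\Gamma(\xi)}|c_kd_k|^{q_0}\Big)^{1/q_0}\le\Big(\sum_{a_k\in\Gamma(\xi)}|c_k|^{q_1}\Big)^{1/q_1}\Big(\sum_{a_k\in\Gamma(\xi)}|d_k|^{q_2}\Big)^{1/q_2}.$$
One then takes the $L^{p_0}(\s_n)$ norm in $\xi$ and applies H\"older on $\s_n$ with exponents $p_1/p_0,p_2/p_0$ (since $p_0/p_1+p_0/p_2=1$ and $p_0\le p_1,p_2$), obtaining $\|c\cdot d\|_{T^{p_0}_{q_0}(Z)}\lesssim\|c\|_{T^{p_1}_{q_1}(Z)}\|d\|_{T^{p_2}_{q_2}(Z)}$. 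The $\infty$-endpoints are handled by replacing the relevant sum or integral by a supremum, reading $T^\infty_q(Z)$ through the Carleson-measure description recorded above and $T^p_\infty(Z)$ through its defining supremum.

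For the factorization half, normalize $\|\lambda\|_{T^{p_0}_{q_0}(Z)}=1$ and, pushing the phase of $\lambda_k$ into one factor, treat $\lambda_k\ge0$. Introduce the tent function $F(\xi)=\big(\sum_{a_k\in\Gamma(\xi)}\lambda_k^{q_0}\big)^{1/q_0}$, so that $\|F\|_{L^{p_0}(\s_n)}=1$, and attach to each lattice point the shadow average $\rho_k=\big(\sigma(I(a_k))^{-1}\int_{I(a_k)}F^{q_0}\,d\sigma\big)^{1/q_0}$, where $I(a_k)=\{\xi:a_k\in\Gamma(\xi)\}$ satisfies $\sigma(I(a_k))\asymp(1-|a_k|^2)^n$. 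Putting $\beta=\frac{p_0}{p_1}-\frac{q_0}{q_1}$, I would define
$$c_k=\lambda_k^{q_0/q_1}\rho_k^{\beta},\qquad d_k=\lambda_k^{q_0/q_2}\rho_k^{-\beta}$$
(and $c_k=d_k=0$ where $\lambda_k=0$). Since $q_0/q_1+q_0/q_2=1$, the powers of $\rho_k$ cancel and $c_kd_k=\lambda_k$ holds exactly, so the whole task reduces to bounding the two factor norms. A formal computation replacing $\rho_k$ by the ``local value'' $F(\xi)$ yields $\sum_{a_k\in\Gamma(\xi)}c_k^{q_1}\approx F(\xi)^{\beta q_1+q_0}=F(\xi)^{q_1p_0/p_1}$ and $\sum_{a_k\in\Gamma(\xi)}d_k^{q_2}\approx F(\xi)^{q_2p_0/p_2}$, whence $\|c\|_{T^{p_1}_{q_1}(Z)}\approx\|F\|_{L^{p_0}}^{p_0/p_1}$ and $\|d\|_{T^{p_2}_{q_2}(Z)}\approx\|F\|_{L^{p_0}}^{p_0/p_2}$; because $p_0/p_1+p_0/p_2=1$, the product of the two norms reproduces $\|F\|_{L^{p_0}}=1=\|\lambda\|_{T^{p_0}_{q_0}(Z)}$, exactly as the theorem demands.

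Turning the heuristic ``$\rho_k\approx F(\xi)$ for $a_k\in\Gamma(\xi)$'' into rigorous inequalities is the step I expect to be the \emph{main obstacle}. The number $\rho_k$ is a single average, whereas $F(\xi)$ fluctuates across the shadow $I(a_k)$, so the two factor bounds cannot both be read off pointwise. Instead I would estimate $\|c\|_{T^{p_1}_{q_1}(Z)}$ and $\|d\|_{T^{p_2}_{q_2}(Z)}$ by a Fubini interchange, converting $\sum_{a_k\in\Gamma(\xi)}$ back into an integral against $F^{q_0}$ through $\sigma(I(a_k))\asymp(1-|a_k|^2)^n$ and the finite-overlap property (iii) of the $\delta$-lattice, followed by Hardy--Littlewood maximal-function estimates on $\s_n$. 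The delicate point is the opposite signs of the weight exponents $\beta$ and $-\beta$: the factor carrying the positive power of $\rho_k$ is dominated directly by a maximal function of $F$, which is bounded once the hypotheses $p_0\le p_1,p_2$ and $\frac{1}{p_j}+\frac{1}{q_j}>0$ place the effective exponent in the admissible range, whereas the factor carrying the negative power requires a reverse control of $\rho_k$ from below. For the latter I would invoke the duality of sequence tent spaces (Theorem \ref{dual}) in the Banach range and a reverse-H\"older/maximal argument for the shadow averages in the quasi-Banach range where duality is unavailable. Finally, the genuine $\infty$-endpoints ($1/p_j=0$ or $1/q_j=0$) must be carried out separately via the Carleson-measure and supremum descriptions; these degenerate cases, together with the sign bookkeeping of $\beta$ and the passage through the quasi-Banach regime, are where the argument has to be organized most carefully.
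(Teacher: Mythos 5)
First, a point of comparison: the paper does not prove this statement at all --- it is quoted verbatim from \cite[Proposition 6]{MPPW} --- so your attempt must stand on its own merits, and as written it has two genuine gaps. The first is in what you call the easy half. Your double H\"older argument is fine when all four indices are finite, but the substance of the theorem --- and the only way this paper ever uses it, in Theorems \ref{case2} and \ref{case4} --- is at the endpoint $p_2=\infty$, $q_2<\infty$, i.e.\ multiplication by a factor $d\in T^{\infty}_{q_2}(Z)$. There your plan of ``replacing the relevant sum or integral by a supremum'' fails: $T^{\infty}_{q_2}(Z)$ is defined through averages over the Carleson boxes $Q(u)$, not through cone sums, and the Carleson condition does \emph{not} control $\sum_{a_k\in\Gamma(\xi)}|d_k|^{q_2}$. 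Concretely, put $d_k=1$ on a radial sequence of lattice points $a_j$ with $1-|a_j|\asymp 2^{-j}$ tending to a fixed $\xi_0\in\s_n$ and $d_k=0$ otherwise: the measure $\sum_k|d_k|^{q_2}(1-|a_k|^2)^n\delta_{a_k}$ is Carleson (geometric series), yet the cone sum over $\Gamma(\xi)$ grows like $\log\left(1/|1-\langle\xi,\xi_0\rangle|\right)$, so no pointwise H\"older estimate of the type you propose can yield $\|c\cdot d\|_{T^{p_0}_{q_0}(Z)}\lesssim\|c\|_{T^{p_0}_{q_1}(Z)}\|d\|_{T^{\infty}_{q_2}(Z)}$. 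This endpoint is a discrete analogue of the Coifman--Meyer--Stein inclusion $T^{p}_{q}\cdot T^{\infty}_{q'}\subset T^{p}_{1}$ and requires a genuine stopping-time or duality argument, which is precisely part of what the cited source supplies.

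The second gap is that your factorization half is a plan rather than a proof. The exponent bookkeeping for $c_k=\lambda_k^{q_0/q_1}\rho_k^{\beta}$, $d_k=\lambda_k^{q_0/q_2}\rho_k^{-\beta}$ with $\beta=\frac{p_0}{p_1}-\frac{q_0}{q_1}$ is indeed consistent, but you explicitly defer the central estimates --- rigorously bounding $\|c\|_{T^{p_1}_{q_1}(Z)}$ and $\|d\|_{T^{p_2}_{q_2}(Z)}$ --- and the tools you name do not suffice as described. The Hardy--Littlewood maximal operator is unbounded on $L^{r}(\s_n)$ for $r\leq1$, and the hypotheses allow $p_0\leq q_0$, so the maximal-function route breaks down exactly in the quasi-Banach range where you would need it; Theorem \ref{dual} provides duality only for $1\leq p,q$ with $p+q>2$, or $0<q<1<p$, so ``invoke duality in the Banach range'' cannot be stretched to cover the remaining cases, and the ``reverse-H\"older/maximal argument'' for lower bounds on the shadow averages is never specified. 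For the factor carrying the negative power, the only cheap lower bound, $\rho_k\geq\lambda_k$ (valid since $F(\xi)\geq\lambda_k$ for all $\xi\in I(a_k)$), gives $d_k\leq\lambda_k^{\,q_0/q_2-\beta}$ when $\beta\geq0$, which does not produce the claimed $T^{p_2}_{q_2}(Z)$ bound; and $\beta$ can have either sign, so the roles of the two factors cannot be fixed in advance. In short: the statement is plausible under your scheme, but the two hard analytic steps --- the $T^{\infty}_{q}$ endpoint of the H\"older half and the rigorous two-sided comparison of $\rho_k$ with $F$ --- are exactly the content of \cite[Proposition 6]{MPPW}, and neither is carried out here.
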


\subsection{Some integral estimates}

We next recall some useful integral estimates. The following Forelli--Rudin type estimate can be found in \cite[Lemma 2.5]{OF}.

\begin{lemma}\label{FRgeneral}
Let $s>-1$, $r,t>0$ and $r+t>s+n+1>r,t$. Then for $z,u\in\overline{\B_n}$, one has
$$\int_{\B_n}\frac{(1-|w|^2)^sdv(w)}{|1-\langle u,w\rangle|^r|1-\langle z,w\rangle|^t}
\lesssim\frac{1}{|1-\langle z,u\rangle|^{r+t-s-n-1}}.$$
\end{lemma}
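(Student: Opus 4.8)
The plan is to reduce this two–point integral to the classical one–variable Forelli--Rudin estimates (see \cite[Theorem 45]{ZZ} and \cite{Zhuball}) by cutting $\B_n$ into pieces adapted to the two singularities $u$ and $z$. Write $\lambda:=|1-\langle z,u\rangle|$ and $\eta:=r+t-s-n-1$, so the hypotheses read $\eta>0$ together with $0<r,t<s+n+1$, and the target estimate is $I(z,u)\lesssim\lambda^{-\eta}$. The integrand is symmetric under interchanging the data $(u,r)$ with $(z,t)$, which I will exploit to halve the casework. First I would record the two elementary tools used throughout: the quasi–triangle inequality $|1-\langle z,u\rangle|\leq2\left(|1-\langle z,w\rangle|+|1-\langle u,w\rangle|\right)$ on $\overline{\B_n}$ (obtained by squaring the triangle inequality for the metric $|1-\langle\cdot,\cdot\rangle|^{1/2}$), and the truncated single–variable bound
$$\int_{|1-\langle a,w\rangle|<\delta}\frac{(1-|w|^2)^s}{|1-\langle a,w\rangle|^c}\,dv(w)\lesssim\delta^{s+n+1-c}\quad(c<s+n+1),$$
which follows from Lemma \ref{FRgeneral}'s one–variable ancestor together with $v_{s}\left(\{w:|1-\langle a,w\rangle|<\delta\}\right)\asymp\delta^{s+n+1}$.

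Next I would fix a large constant $K$ and split $\B_n=E_z\cup E_u\cup E_0$, where $E_z=\{|1-\langle z,w\rangle|<\lambda/K\}$ is the region near $z$, $E_u=\{|1-\langle u,w\rangle|<\lambda/K\}$ is the region near $u$, and $E_0$ is the remaining both–far region. Choosing $K>4$ makes $E_z$ and $E_u$ disjoint and forces $|1-\langle u,w\rangle|\gtrsim\lambda$ on $E_z$ (and symmetrically $|1-\langle z,w\rangle|\gtrsim\lambda$ on $E_u$), both via the quasi–triangle inequality. On $E_z$ I would freeze the harmless factor, $|1-\langle u,w\rangle|^{-r}\lesssim\lambda^{-r}$, and apply the truncated estimate to the $z$–factor using $t<s+n+1$, getting $\int_{E_z}\lesssim\lambda^{-r}\cdot\lambda^{s+n+1-t}=\lambda^{-\eta}$; the region $E_u$ is treated identically after swapping roles and using $r<s+n+1$, again producing $\lambda^{-\eta}$.

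The hard part is $E_0$, where both distances are $\gtrsim\lambda$ and neither singularity can be frozen: any crude attempt to collapse the two factors onto a single critical exponent $s+n+1$ manufactures a spurious factor $\log(1/\lambda)$ that destroys the sharp power. To bypass this I would decompose $E_0$ into dyadic shells $A_j=\{\,2^{j-1}\lambda/K\leq|1-\langle u,w\rangle|<2^{j}\lambda/K\,\}$ for $j\geq1$. On $A_j$ the $u$–factor is comparable to $(2^{j}\lambda)^{-r}$, while $A_j$ lies in a non–isotropic ball of radius $\asymp2^{j}\lambda$ about $u$ that also contains $z$ (since $\lambda\leq2^{j}\lambda$); hence the truncated estimate yields $\int_{A_j}(1-|w|^2)^{s}|1-\langle z,w\rangle|^{-t}\,dv\lesssim(2^{j}\lambda)^{s+n+1-t}$. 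Multiplying, the $j$-th shell contributes $\lesssim(2^{j}\lambda)^{-r}(2^{j}\lambda)^{s+n+1-t}=2^{-j\eta}\lambda^{-\eta}$, and since $\eta>0$ the geometric series $\sum_{j\geq1}2^{-j\eta}$ converges, giving $\int_{E_0}\lesssim\lambda^{-\eta}$. Summing the three regions gives the claim. The delicate point is precisely the dyadic summation on $E_0$: the condition $\eta>0$ (that is, $r+t>s+n+1$) is exactly what forces geometric decay across shells and removes the logarithm, while $r,t<s+n+1$ guarantees convergence of every truncated single–variable integral invoked along the way.
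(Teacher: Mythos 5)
Your proof is correct, but note that the paper does not prove Lemma \ref{FRgeneral} at all: it imports the estimate wholesale from \cite[Lemma 2.5]{OF}, so there is no internal argument to compare against, and your write-up is a self-contained substitute. Your route --- split off the two non-isotropic balls $E_z$, $E_u$ of radius $\lambda/K$ and run dyadic shells $A_j$ on the rest --- is the standard mechanism for such two-kernel Forelli--Rudin estimates, and every step checks: the quasi-triangle inequality $|1-\langle z,u\rangle|\leq 2(|1-\langle z,w\rangle|+|1-\langle u,w\rangle|)$ does hold on $\overline{\B_n}$ since $|1-\langle\cdot,\cdot\rangle|^{1/2}$ is a metric there; the truncated estimate is valid for $c<n+1+s$ (itself provable by dyadic shells around $a$ plus the volume bound); the near regions consume exactly the hypotheses $t<n+1+s$ and $r<n+1+s$ and each yield $\lambda^{-\eta}$; and on each shell the inclusion $A_j\subset\{w:|1-\langle z,w\rangle|\lesssim 2^j\lambda\}$ (because $\lambda\leq 2^j\lambda$ and the quasi-triangle inequality) legitimizes the bound $(2^j\lambda)^{n+1+s-t}$ for the $z$-factor, after which $\eta>0$ gives geometric decay. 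Three cosmetic repairs: (i) for an interior center $a$ the relation $v_s(\{w:|1-\langle a,w\rangle|<\delta\})\asymp\delta^{n+1+s}$ fails as stated, since $|1-\langle a,w\rangle|\geq 1-|a|$ can make the set empty; only $\lesssim$ holds uniformly in $a\in\overline{\B_n}$, which is all you use, so write $\lesssim$; (ii) you should dispose of the degenerate case $\langle z,u\rangle=1$ at the outset (it forces $z=u\in\s_n$, the right-hand side is infinite, and the claim is vacuous), since your decomposition presupposes $\lambda>0$; (iii) the region $E_z$ is in fact redundant --- on $E_z$ one has $|1-\langle u,w\rangle|\geq\lambda/4\geq\lambda/K$, so $E_z\cup E_0$ is already covered by the shells $A_j$, whose estimate never used $w\notin E_z$ --- so only $E_u$ needs separate treatment; the redundancy is harmless but worth noting. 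Compared with citing \cite{OF}, your argument is longer but elementary and makes visible exactly where each hypothesis $r,t<s+n+1<r+t$ enters, which the paper's quotation hides.
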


We also need the Khinchine--Kahane inequalities regarding integration of random series. Let $\{r_k\}$ be the sequence of Rademacher functions on $[0,1]$ (see \cite[Appendix A]{Du} for example). For almost every $\tau\in[0,1]$, the sequence $\{r_k(\tau)\}$ consists of signs $\pm1$.  The following classical Khinchine's inequality can be found in \cite[Appendix A]{Du}, while Kahane’s inequality can be found in \cite[Lemma 5]{Lue93}.

\noindent{\bf Khinchine's inequality.} {\it Let $0<p<\infty$. Then for any sequence $\{c_k\}$ of complex numbers, one has
$$\int_0^1\left|\sum_{k}c_kr_k(\tau)\right|^pd\tau\asymp\left(\sum_{k}|c_k|^2\right)^{p/2}.$$}

\noindent{\bf Kahane's inequality.} {\it Let $X$ be a quasi-Banach space, and $0<p,q<\infty$. For any sequence $\{x_k\}\subset X$, one has
$$\left(\int_0^1\left\|\sum_kr_k(\tau)x_k\right\|^p_Xd\tau\right)^{1/p}\asymp
\left(\int_0^1\left\|\sum_kr_k(\tau)x_k\right\|^q_Xd\tau\right)^{1/q}.$$
Moreover, the implicit constants can be chosen to depend only on $p$ and $q$, and not on the quasi-Banach space $X$.}

\section{Proof of Theorem \ref{main}}\label{proof}

In this section, we are going to prove Theorem \ref{main}. It is clear that for any $f\in\h(\B_n)$, $\|f\|_{T^t_s(\mu)}=\|A_{\mu,s}(f)\|_{L^t(\s_n)}$. Hence the statements (a) and (b) of Theorem \ref{main} are equivalent. Moreover,
\begin{equation}\label{norm=}
\|I_d\|_{\HT_{q,\alpha}^p\to T^t_s(\mu)}=\|A_{\mu,s}\|_{\HT_{q,\alpha}^p\to L^t(\s_n)}.
\end{equation}
We now focus on the boundedness of $A_{\mu,s}:\HT_{q,\alpha}^p\to L^t(\s_n)$. Before proceeding, we display the following elementary lemma.

\begin{lemma}
Let $\alpha>-n-1$, $0<s,q<\infty$, and let $\mu$ be a positive Borel measure on $\B_n$. Let $Z=\{a_k\}$ be a $\delta$-lattice. Then for any $f\in\h(\B_n)$ and $\xi\in\s_n$,
\begin{equation}\label{ele1}
\left(A_{\mu,s}f(\xi)\right)^s
\lesssim\sum_{a_k\in\widetilde{\Gamma}(\xi)}\left(\int_{D(a_k,2\delta)}|f|^qdv_{\alpha}\right)^{s/q}
  \cdot\widehat{\mu}_{\delta}(a_k)(1-|a_k|^2)^{\frac{(q-s)(n+1+\alpha)}{q}},
\end{equation}
where the implicit constant is independent of $f$ and $\xi$. In particular, for any $r\in(0,1)$,
\begin{equation}\label{ele2}
\left(A_{\mu,s}f(\xi)\right)^s\lesssim\int_{\widetilde{\Gamma}(\xi)}|f(w)|^s\widehat{\mu}_{r}(w)dv_{\alpha}(w).
\end{equation}
\end{lemma}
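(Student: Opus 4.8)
The plan is to prove \eqref{ele1} by a $\delta$-lattice decomposition of the Korányi region combined with the sub-mean value inequality for the (pluri)subharmonic function $|f|^q$, and then to deduce \eqref{ele2} from \eqref{ele1} via the equivalence of $L^q$- and $L^s$-averages of holomorphic functions over Bergman balls. For \eqref{ele1}, fix a $\delta$-lattice $Z=\{a_k\}$ and recall that $\B_n=\bigcup_kD(a_k,\delta)$. Writing $(A_{\mu,s}f(\xi))^s=\int_{\Gamma(\xi)}|f(z)|^s\,d\mu(z)/(1-|z|^2)^n$ and covering $\Gamma(\xi)$ by the balls $D(a_k,\delta)$, I would retain only those $a_k$ with $D(a_k,\delta)\cap\Gamma(\xi)\neq\emptyset$; by the aperture-change relation $\bigcup_{z\in\Gamma(\xi)}D(z,\delta)\subset\widetilde{\Gamma}(\xi)$ these all satisfy $a_k\in\widetilde{\Gamma}(\xi)$. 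On each $D(a_k,\delta)$ the quantities $1-|z|^2$ are comparable to $1-|a_k|^2$, and the sub-mean value inequality gives, for $z\in D(a_k,\delta)$,
$$|f(z)|^s\lesssim\left(\frac{1}{(1-|a_k|^2)^{n+1+\alpha}}\int_{D(a_k,2\delta)}|f|^q\,dv_{\alpha}\right)^{s/q}.$$
Integrating this against $d\mu(z)/(1-|z|^2)^n$ over $D(a_k,\delta)$, using $\int_{D(a_k,\delta)}d\mu(z)/(1-|z|^2)^n\asymp\mu(D(a_k,\delta))/(1-|a_k|^2)^n=\widehat{\mu}_{\delta}(a_k)(1-|a_k|^2)^{n+1+\alpha}$, and summing over $a_k\in\widetilde{\Gamma}(\xi)$, the powers of $1-|a_k|^2$ combine into the exponent $\frac{(q-s)(n+1+\alpha)}{q}$, which is exactly \eqref{ele1}.

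To obtain \eqref{ele2} from \eqref{ele1} I would first record the elementary comparison
$$\left(\int_{D(a_k,2\delta)}|f|^q\,dv_{\alpha}\right)^{s/q}(1-|a_k|^2)^{\frac{(q-s)(n+1+\alpha)}{q}}\lesssim\int_{D(a_k,3\delta)}|f|^s\,dv_{\alpha},$$
valid for holomorphic $f$: when $q\leq s$ this is Jensen's inequality applied to the normalized measure $dv_{\alpha}/v_{\alpha}(D(a_k,2\delta))$ together with $v_{\alpha}(D(a_k,2\delta))\asymp(1-|a_k|^2)^{n+1+\alpha}$, while for $q>s$ it follows by estimating $\sup_{D(a_k,2\delta)}|f|^{q-s}$ through the sub-mean value inequality for $|f|^s$ on the slightly larger ball. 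Given $r\in(0,1)$, choosing the lattice with $\delta\leq r/4$ ensures $D(a_k,\delta)\subset D(w,r)$ whenever $w\in D(a_k,3\delta)$, so that $\widehat{\mu}_{\delta}(a_k)\lesssim\widehat{\mu}_r(w)$ there. Substituting these two facts into \eqref{ele1} turns the sum into $\sum_{a_k\in\widetilde{\Gamma}(\xi)}\int_{D(a_k,3\delta)}|f(w)|^s\widehat{\mu}_r(w)\,dv_{\alpha}(w)$, and the finite overlap property (iii) (each $w$ lies in at most $N$ of the $D(a_k,4\delta)$, while $w\in D(a_k,3\delta)$ with $a_k\in\widetilde{\Gamma}(\xi)$ forces $w\in\widetilde{\Gamma}(\xi)$) collapses this to the integral in \eqref{ele2}. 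Alternatively, \eqref{ele2} can be proved directly by applying the sub-mean value inequality to $|f(z)|^s$ on $D(z,r)$ and swapping the order of integration by Fubini, exploiting the symmetry of the Bergman metric.

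I expect the only genuinely technical step to be the two-sided comparison of $L^q$- and $L^s$-averages that converts the exponent $s/q$ back into a plain $L^s$-integral, since it splits into the two cases $q\leq s$ and $q>s$; everything else is routine bookkeeping with the standard lattice properties, the comparability of $1-|z|^2$ and of $v_{\alpha}(D(a,\delta))$ on Bergman balls, and the aperture-change relations recalled in the preliminaries.
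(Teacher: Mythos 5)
Your proof is correct, and for \eqref{ele1} it coincides with the paper's argument: cover $\Gamma(\xi)$ by the lattice balls, note that the relevant $a_k$ lie in $\widetilde{\Gamma}(\xi)$ via the aperture-change relation, apply the sub-mean value property of $|f|^q$ together with $D(z,\delta)\subset D(a_k,2\delta)$, and collect the powers of $1-|a_k|^2$; your exponent bookkeeping is exact. Where you diverge is the passage to \eqref{ele2}. The paper exploits that $q$ is a \emph{free parameter} in \eqref{ele1}: it simply applies \eqref{ele1} with $q=s$ and $\delta=r/3$, so the exponent $\frac{(q-s)(n+1+\alpha)}{q}$ vanishes, and then uses $D(a_k,\delta)\subset D(w,3\delta)$ for $w\in D(a_k,2\delta)$ to replace $\widehat{\mu}_{\delta}(a_k)$ by $\widehat{\mu}_{r}(w)$ inside the integral, finishing with the finite-overlap property. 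You instead keep $q$ general and prove the comparison
$$\Bigl(\int_{D(a_k,2\delta)}|f|^q\,dv_{\alpha}\Bigr)^{s/q}(1-|a_k|^2)^{\frac{(q-s)(n+1+\alpha)}{q}}\lesssim\int_{D(a_k,3\delta)}|f|^s\,dv_{\alpha},$$
splitting into $q\leq s$ (Jensen, which in fact needs no enlargement of the ball and no holomorphy) and $q>s$ (a sup-estimate via the sub-mean value property of $|f|^s$); both cases check out, as do the bound $\widehat{\mu}_{\delta}(a_k)\lesssim\widehat{\mu}_{r}(w)$ for $\delta\leq r/4$ and $w\in D(a_k,3\delta)$, and the finite-overlap collapse (both you and the paper implicitly use that \eqref{ele1} holds for every $\delta$-lattice, so the parameter $\delta$ may be chosen in terms of $r$). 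Thus the step you flag as the only genuinely technical one is precisely the step the paper's $q=s$ specialization makes trivial; your route costs a case analysis but yields a stronger, reusable intermediate fact (the $L^q$--$L^s$ average comparison on Bergman balls). Your alternative direct proof of \eqref{ele2} --- the sub-mean value inequality for $|f|^s$ on $D(z,r)$, Fubini, and the symmetry $z\in D(w,r)\Longleftrightarrow w\in D(z,r)$ --- is also valid and is arguably the shortest route of all.
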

\begin{proof}
By the sub-harmonic property of $|f|^q$, we obtain that
\begin{align*}
\left(A_{\mu,s}f(\xi)\right)^s
&\leq\sum_{a_k\in\widetilde{\Gamma}(\xi)}\int_{D(a_k,\delta)}|f(z)|^s\frac{d\mu(z)}{(1-|z|^2)^n}\\
&\lesssim\sum_{a_k\in\widetilde{\Gamma}(\xi)}\int_{D(a_k,\delta)}
  \left(\frac{1}{(1-|z|^2)^{n+1+\alpha}}\int_{D(z,\delta)}|f|^qdv_{\alpha}\right)^{s/q}\frac{d\mu(z)}{(1-|z|^2)^n}\\
&\lesssim\sum_{a_k\in\widetilde{\Gamma}(\xi)}\left(\int_{D(a_k,2\delta)}|f|^qdv_{\alpha}\right)^{s/q}
  \cdot\frac{\mu(D(a_k,\delta))}{(1-|a_k|^2)^{n+\frac{s(n+1+\alpha)}{q}}}\\
&=\sum_{a_k\in\widetilde{\Gamma}(\xi)}\left(\int_{D(a_k,2\delta)}|f|^qdv_{\alpha}\right)^{s/q}
  \cdot\widehat{\mu}_{\delta}(a_k)(1-|a_k|^2)^{\frac{(q-s)(n+1+\alpha)}{q}},
\end{align*}
which establishes \eqref{ele1}. Noting that $D(a_k,\delta)\subset D(w,3\delta)$ whenever $w\in D(a_k,2\delta)$, we may choose $q=s$ and $\delta=r/3$ in \eqref{ele1} to obtain that
\begin{align*}
\left(A_{\mu,s}f(\xi)\right)^s
&\lesssim\sum_{a_k\in\widetilde{\Gamma}(\xi)}\int_{D(a_k,2\delta)}|f(w)|^s\widehat{\mu}_{3\delta}(w)dv_{\alpha}(w)\\
&\lesssim\int_{\widetilde{\Gamma}(\xi)}|f(w)|^s\widehat{\mu}_{r}(w)dv_{\alpha}(w),
\end{align*}
which gives \eqref{ele2}.
\end{proof}

The following lemma establishes a family of test functions in $\HT^p_{q,\alpha}$.

\begin{lemma}\label{te}
Let $\alpha>-n-1$ and $0<p,q,\theta<\infty$. For $a\in\B_n$, define
$$f_a(z)=\frac{(1-|a|^2)^{\theta}}{(1-\langle z,a \rangle)^{\theta+\frac{n+1+\alpha}{q}+\frac{n}{p}}},\quad z\in\B_n.$$
Then $f_a\in\HT^p_{q,\alpha}$, and $\|f_a\|_{\HT^p_{q,\alpha}}\lesssim1.$
\end{lemma}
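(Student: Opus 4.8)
The plan is to estimate the $\HT^p_{q,\alpha}$-norm directly from its definition, reducing everything to a single pointwise estimate on the inner area integral followed by one integration over $\s_n$. Writing $d=1-|a|^2$ and unwinding the norm (recall that $dv_{\alpha+n}/(1-|z|^2)^n=dv_\alpha$), we have $\|f_a\|_{\HT^p_{q,\alpha}}^p=\int_{\s_n}G(\xi)^{p/q}\,d\sigma(\xi)$, where $G(\xi):=\int_{\Gamma(\xi)}|f_a(z)|^q\,dv_\alpha(z)=d^{\theta q}\int_{\Gamma(\xi)}(1-|z|^2)^\alpha|1-\langle z,a\rangle|^{-N}\,dv(z)$ and $N:=q\theta+(n+1+\alpha)+nq/p$ is the exponent obtained by raising the denominator of $f_a$ to the power $q$. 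The whole argument hinges on the clean pointwise bound
$$G(\xi)\lesssim d^{\theta q}\,|1-\langle\xi,a\rangle|^{-q(\theta+n/p)},\qquad \xi\in\s_n,$$
with constant independent of $a$. Note that no splitting into the cases $p<q$ and $p\ge q$ is needed: once $G$ is controlled pointwise, raising to the power $p/q$ is harmless.

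To prove this bound I would first record the geometry of the Kor\'anyi region. For $z\in\Gamma(\xi)$ one has $|1-\langle z,\xi\rangle|\asymp 1-|z|^2$, while the elementary inequality $|1-\langle z,a\rangle|\ge\tfrac12(1-|z|^2)$ holds for all $z$; combining these with the quasi-triangle inequality for the non-isotropic metric $|1-\langle\cdot,\cdot\rangle|^{1/2}$ yields $|1-\langle z,a\rangle|\asymp (1-|z|^2)+|1-\langle\xi,a\rangle|$ throughout $\Gamma(\xi)$. Inserting this into $G(\xi)$ and using the standard layer (Carleson-box) volume estimate $v\bigl(\{z\in\Gamma(\xi):1-|z|^2<t\}\bigr)\asymp t^{n+1}$, the area integral collapses to a one-dimensional integral, $G(\xi)\lesssim d^{\theta q}\int_0^1 s^{n+\alpha}(s+\rho)^{-N}\,ds$ with $\rho:=|1-\langle\xi,a\rangle|$. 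Here the hypothesis $\alpha>-n-1$ is exactly what guarantees convergence at $s=0$, while $N>n+1+\alpha$ (which holds since $\theta>0$) controls the tail; the substitution $s=\rho u$ then gives $\int_0^\infty s^{n+\alpha}(s+\rho)^{-N}\,ds\asymp\rho^{\,n+1+\alpha-N}$, and since $N-(n+1+\alpha)=q(\theta+n/p)$ this is precisely the claimed bound. I expect this layer estimate, together with the geometric comparison, to be the technical heart of the argument.

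With the pointwise bound in hand, I would finish by writing $\|f_a\|_{\HT^p_{q,\alpha}}^p\lesssim d^{\theta p}\int_{\s_n}|1-\langle\xi,a\rangle|^{-(n+p\theta)}\,d\sigma(\xi)$ and invoking the classical integral estimate over the sphere (a boundary analogue, in the spirit of Lemma \ref{FRgeneral}), namely $\int_{\s_n}|1-\langle\xi,a\rangle|^{-(n+c)}\,d\sigma(\xi)\asymp(1-|a|^2)^{-c}$ valid for every $c>0$, applied with $c=p\theta>0$. This produces $\|f_a\|_{\HT^p_{q,\alpha}}^p\lesssim d^{\theta p}\cdot d^{-p\theta}=1$, uniformly in $a$, which is exactly the assertion $\|f_a\|_{\HT^p_{q,\alpha}}\lesssim 1$ and in particular shows that $f_a\in\HT^p_{q,\alpha}$.
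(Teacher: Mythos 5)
Your proof is correct, and it reaches the same intermediate pointwise bound as the paper --- namely $\int_{\Gamma(\xi)}|f_a|^q\,dv_\alpha\lesssim (1-|a|^2)^{\theta q}|1-\langle \xi,a\rangle|^{-q(\theta+n/p)}$, followed by the classical sphere estimate $\int_{\s_n}|1-\langle \xi,a\rangle|^{-(n+p\theta)}\,d\sigma(\xi)\asymp(1-|a|^2)^{-p\theta}$ --- but by a genuinely different route for the inner cone integral. The paper inserts the factor $(1-|z|^2)^\kappa/|1-\langle z,\xi\rangle|^\kappa\asymp 1$ (valid on $\Gamma(\xi)$, with $\kappa>\max\{n,\theta q+nq/p\}$), extends the integral to all of $\B_n$, and invokes the two-kernel Forelli--Rudin estimate of Lemma \ref{FRgeneral}; you instead stay inside the cone, using the nonisotropic quasi-triangle inequality to get $|1-\langle z,a\rangle|\asymp(1-|z|^2)+|1-\langle\xi,a\rangle|$ on $\Gamma(\xi)$ (your lower bound, which is all you need, is justified exactly as you say via $|1-\langle z,a\rangle|\ge\frac12(1-|z|^2)$), together with the layer volume bound $v(\{z\in\Gamma(\xi):1-|z|^2<t\})\lesssim t^{n+1}$ (the truncated cone sits inside the Carleson box $B_t(\xi)$), to reduce to the one-dimensional integral $\int_0^1 s^{n+\alpha}(s+\rho)^{-N}\,ds\lesssim\rho^{\,n+1+\alpha-N}$; your convergence conditions $n+\alpha>-1$ and $N>n+1+\alpha$ are exactly $\alpha>-n-1$ and $\theta>0$, and the exponent bookkeeping ($N-(n+1+\alpha)=q(\theta+n/p)$) checks out. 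The paper's argument is shorter given that Lemma \ref{FRgeneral} is already available and is the tool reused elsewhere (e.g., in Lemma \ref{test-}); yours is more self-contained and elementary, replacing the two-kernel lemma by transparent cone geometry, and it makes visible that no case split in $p$ versus $q$ is needed --- which is equally true, if implicit, in the paper's proof. Your remark that the uniform norm bound itself yields membership $f_a\in\HT^p_{q,\alpha}$ is also fine (the paper instead notes that $f_a$ is bounded, hence trivially in the space, before estimating the norm).
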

\begin{proof}
It is clear that $f_a$ is bounded on $\B_n$. Therefore, $f_a\in\HT^p_{q,\alpha}$. Fix $\kappa>\max\{n,\theta q+nq/p\}$. Noting that for any $\xi\in\s_n$, $|1-\langle z,\xi\rangle|\asymp1-|z|^2$ if $z\in\Gamma(\xi)$, we can apply Lemma \ref{FRgeneral} and the classical Forelli--Rudin estimate (see \cite[Theorem 1.12]{Zhuball}) to obtain that
\begin{align*}
\|f_a\|^p_{\HT^p_{q,\alpha}}
&=\int_{\s_n}\left(\int_{\Gamma(\xi)}\frac{(1-|a|^2)^{\theta q}dv_{\alpha}(z)}
  {|1-\langle z,a\rangle|^{\theta q+n+1+\alpha+\frac{nq}{p}}}\right)^{p/q}d\sigma(\xi)\\
&\asymp\int_{\s_n}\left(\int_{\Gamma(\xi)}\frac{(1-|z|^2)^{\kappa}}{|1-\langle z,\xi\rangle|^{\kappa}}
  \frac{(1-|a|^2)^{\theta q}dv_{\alpha}(z)}
  {|1-\langle z,a\rangle|^{\theta q+n+1+\alpha+\frac{nq}{p}}}\right)^{p/q}d\sigma(\xi)\\
&\lesssim\int_{\s_n}\frac{(1-|a|^2)^{\theta p}}{|1-\langle a,\xi\rangle|^{\theta p+n}}d\sigma(\xi)\lesssim1,
\end{align*}
which finishes the proof.
\end{proof}

We will separate the boundedness of $A_{\mu,s}:\HT_{q,\alpha}^p\to L^t(\s_n)$ into four results according to what case is considered. The following is the first case with the corresponding estimate for the norm of $A_{\mu,s}$.

\begin{theorem}\label{case1}
Let $\alpha>-n-1$, $p<t$, or $p=t$ and $q\leq s$. Then $A_{\mu,s}:\HT_{q,\alpha}^p\to L^t(\s_n)$ is bounded if and only if
$$G_{\mu}(z):=\widehat{\mu}_r(z)(1-|z|^2)^{\frac{(q-s)(n+1+\alpha)}{q}+ns\left(\frac{1}{t}-\frac{1}{p}\right)}$$
is bounded on $\B_n$. Moreover,
$$\|A_{\mu,s}\|_{\HT_{q,\alpha}^p\to L^t(\s_n)}\asymp\sup_{z\in\B_n}G_{\mu}(z)^{1/s}.$$
\end{theorem}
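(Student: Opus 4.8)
The plan is to prove the two implications simultaneously and, in the process, extract both halves of the norm comparison. Set $M:=\sup_{z\in\B_n}G_\mu(z)$. I would show that boundedness of $A_{\mu,s}$ forces $M^{1/s}\lesssim\|A_{\mu,s}\|_{\HT^p_{q,\alpha}\to L^t(\s_n)}$ (necessity), and conversely that finiteness of $M$ yields $\|A_{\mu,s}\|_{\HT^p_{q,\alpha}\to L^t(\s_n)}\lesssim M^{1/s}$ (sufficiency). Combining these gives the equivalence together with the stated norm estimate. It is worth noting at the outset that only the sufficiency direction will use the case hypothesis ($p<t$, or $p=t$ with $q\le s$); the necessity argument is valid in every case.

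For necessity I would test against the functions $f_a$ of Lemma \ref{te} (with $\theta$ fixed), which satisfy $\|f_a\|_{\HT^p_{q,\alpha}}\lesssim1$ uniformly in $a\in\B_n$. On a fixed Bergman ball $D(a,r)$ one has $1-|z|^2\asymp1-|a|^2\asymp|1-\langle z,a\rangle|$, so $|f_a(z)|\asymp(1-|a|^2)^{-\frac{n+1+\alpha}{q}-\frac np}$ there. For $\xi$ in a subset $J(a)\subset\s_n$ of measure $\sigma(J(a))\asymp(1-|a|^2)^n$ one has $D(a,r)\subset\Gamma(\xi)$, so restricting the area integral to $D(a,r)$ and inserting $\mu(D(a,r))=\widehat\mu_r(a)(1-|a|^2)^{2n+1+\alpha}$ gives, after a short exponent computation, $(A_{\mu,s}f_a(\xi))^s\gtrsim G_\mu(a)(1-|a|^2)^{-ns/t}$ for $\xi\in J(a)$. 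Raising to the power $t/s$ and integrating over $J(a)$ yields $\|A_{\mu,s}f_a\|_{L^t(\s_n)}\gtrsim G_\mu(a)^{1/s}$, and since $\|A_{\mu,s}f_a\|_{L^t(\s_n)}\le\|A_{\mu,s}\|\,\|f_a\|_{\HT^p_{q,\alpha}}\lesssim\|A_{\mu,s}\|$, taking the supremum over $a$ gives $M^{1/s}\lesssim\|A_{\mu,s}\|$.

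For sufficiency I would start from the pointwise bound \eqref{ele2}. Inserting $\widehat\mu_r(w)\le M(1-|w|^2)^{-\frac{(q-s)(n+1+\alpha)}{q}-ns(1/t-1/p)}$, which is precisely $G_\mu\le M$, converts the weight $\widehat\mu_r\,dv_\alpha$ into $M\,dv_{\beta_0}$ with $\beta_0:=-(n+1)+\frac{s(n+1+\alpha)}{q}-ns(1/t-1/p)$, so that $(A_{\mu,s}f(\xi))^s\lesssim M\int_{\widetilde\Gamma(\xi)}|f|^s\,dv_{\beta_0}$. Taking $t/s$-th powers, integrating over $\s_n$, and invoking aperture-independence of the tent (quasi-)norm reduces everything to the continuous inclusion $\HT^p_{q,\alpha}\subset\HT^t_{s,\beta_0}$, i.e.\ $\|f\|_{\HT^t_{s,\beta_0}}\lesssim\|f\|_{\HT^p_{q,\alpha}}$. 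A direct check shows $\beta_0>-n-1$ and that the indices satisfy the identity $\frac{n+1+\beta_0}{s}+\frac nt=\frac{n+1+\alpha}{q}+\frac np$.

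The real work—and the step I expect to be the main obstacle, since Corollary \ref{incl} is not yet available (it is deduced from this very theorem)—is proving this inclusion from scratch using only Lemma \ref{inclu}. When $p<t$ I would interpolate the outer exponent: choose any $m\in(p,t)$, apply the first inclusion of Lemma \ref{inclu} to embed $\HT^p_{q,\alpha}$ into the Bergman space $A^m_\gamma$ with $\gamma=(\frac mp-1)n-1+\frac{m(n+1+\alpha)}{q}$, and apply the second inclusion of Lemma \ref{inclu} to embed $A^m_{\gamma'}$ into $\HT^t_{s,\beta_0}$ with $\gamma'=(\frac mt-1)n-1+\frac{m(n+1+\beta_0)}{s}$; the index identity above forces $\gamma=\gamma'$, so the two bounded embeddings compose into the desired inclusion. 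When $p=t$ and $q\le s$ the weight degenerates, and I would instead discretize along a $\delta$-lattice: writing $c_k^\ast:=(\int_{D(a_k,2\delta)}|f|^q\,dv_\alpha)^{1/q}$, a sub-mean-value (reverse Hölder) estimate on Bergman balls gives $(\int_{D(a_k,\delta)}|f|^s\,dv_{\beta_0})^{1/s}\lesssim c_k^\ast$ exactly because $\frac{n+1+\beta_0}{s}=\frac{n+1+\alpha}{q}$ here, and then the monotonicity $\ell^q\hookrightarrow\ell^s$ (valid since $q\le s$) together with the standard discretization $\|\{c_k^\ast\}\|_{T^p_q(Z)}\lesssim\|f\|_{\HT^p_{q,\alpha}}$ closes the estimate. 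Putting necessity and sufficiency together yields $\|A_{\mu,s}\|_{\HT^p_{q,\alpha}\to L^t(\s_n)}\asymp\sup_{z\in\B_n}G_\mu(z)^{1/s}$.
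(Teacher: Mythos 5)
Your proposal is correct and follows essentially the same route as the paper: the necessity via the test functions $f_a$ of Lemma \ref{te} localized on $D(a,r)$ and integrated over a cap of measure $\asymp(1-|a|^2)^n$, and the sufficiency via the pointwise bound \eqref{ele2} with the same weight (your $\beta_0$ equals the paper's $\beta$), handled for $p<t$ by sandwiching through the Bergman space $A^{\hat p}_{\gamma}$ with $\hat p\in(p,t)$ using both inclusions of Lemma \ref{inclu}, and for $p=t$, $q\leq s$ by lattice discretization, the sub-mean-value estimate, and $\ell^q\hookrightarrow\ell^s$, exactly as in the paper's estimate \eqref{Hol}. Your observation that the necessity argument needs no case hypothesis also matches the paper, where the same computation is reused in the other cases.
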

\begin{proof}
Suppose first that $A_{\mu,s}:\HT_{q,\alpha}^p\to L^t(\s_n)$ is bounded. For $a\in\B_n$, define $f_a$ as in Lemma \ref{te}. It is well-known that
$$|f_a(z)|\asymp(1-|a|^2)^{-\frac{n+1+\alpha}{q}-\frac{n}{p}},\quad z\in D(a,r).$$
Since $A_{\mu,s}:\HT_{q,\alpha}^p\to L^t(\s_n)$ is bounded, we have
\begin{align*}
\|A_{\mu,s}\|^t_{\HT_{q,\alpha}^p\to L^t(\s_n)}
&\gtrsim\|A_{\mu,s}f_a\|^t_{L^t(\s_n)}\\
&\asymp\int_{\s_n}\left(\int_{\widetilde{\Gamma}(\xi)}|f_a(z)|^s\frac{d\mu(z)}{(1-|z|^2)^n}\right)^{t/s}d\sigma(\xi)\\
&\geq\int_{T(a)}\left(\int_{D(a,r)}|f_a(z)|^s\frac{d\mu(z)}{(1-|z|^2)^n}\right)^{t/s}d\sigma(\xi)\\
&\asymp\left(\mu(D(a,r))(1-|a|^2)^{-\frac{s(n+1+\alpha)}{q}-\frac{sn}{p}-n+\frac{ns}{t}}\right)^{t/s}\\
&=G_{\mu}(a)^{t/s}.
\end{align*}
The arbitrariness of $a\in\B_n$ implies that $G_{\mu}$ is bounded on $\B_n$, and $$\sup_{a\in\B_n}G_{\mu}(a)\lesssim\|A_{\mu,s}\|^s_{\HT_{q,\alpha}^p\to L^t(\s_n)}.$$

Conversely, suppose that $G_{\mu}$ is bounded on $\B_n$. Fix $f\in\HT_{q,\alpha}^p$. Then for any $\xi\in\s_n$, \eqref{ele2} yields that
\begin{equation}\label{xi}
\left(A_{\mu,s}f(\xi)\right)^s\lesssim\int_{\widetilde{\Gamma}(\xi)}|f(z)|^s\widehat{\mu}_{r}(z)dv_{\alpha}(z)
\leq\sup_{\B_n}G_{\mu}\cdot\int_{\widetilde{\Gamma}(\xi)}|f(z)|^sdv_{\beta}(z),
\end{equation}
where
$$\beta=\frac{(s-q)(n+1+\alpha)}{q}+ns\left(\frac{1}{p}-\frac{1}{t}\right)+\alpha.$$
We first consider the case of $p=t$ and $q\leq s$. Let $Z=\{a_k\}$ be a $\delta$-lattice. Then by the sub-harmonic property of $|f|^q$ and H\"{o}lder's inequality,
\begin{align}\label{Hol}
&\nonumber\int_{\widetilde{\Gamma}(\xi)}|f(z)|^sdv_{\beta}(z)\\
&\nonumber\ \lesssim\sum_{a_k\in\widetilde{\Gamma}(\xi)}\int_{D(a_k,\delta)}
    \left(\int_{D(z,\delta)}|f|^qdv_{\alpha}\right)^{s/q}(1-|z|^2)^{\beta-\frac{s(n+1+\alpha)}{q}}dv(z)\\
&\nonumber\ \lesssim\sum_{a_k\in\widetilde{\Gamma}(\xi)}\left(\int_{D(a_k,2\delta)}|f|^qdv_{\alpha}\right)^{s/q}
    \leq\left(\sum_{a_k\in\widetilde{\Gamma}(\xi)}\int_{D(a_k,2\delta)}|f|^qdv_{\alpha}\right)^{s/q}\\
&\ \lesssim\left(\int_{\widetilde{\Gamma}(\xi)}|f|^qdv_{\alpha}\right)^{s/q}.
\end{align}
Inserting this into \eqref{xi}, we obtain that
\begin{align*}
\int_{\s_n}\left(A_{\mu,s}f(\xi)\right)^td\sigma(\xi)
&\lesssim\sup_{\B_n}G_{\mu}^{t/s}\cdot\int_{\s_n}\left(\int_{\widetilde{\Gamma}(\xi)}|f|^qdv_{\alpha}\right)^{t/q}d\sigma(\xi)\\
&\asymp\sup_{\B_n}G_{\mu}^{t/s}\cdot\|f\|^t_{\HT_{q,\alpha}^p}.
\end{align*}
Hence $A_{\mu,s}:\HT_{q,\alpha}^p\to L^t(\s_n)$ is bounded, and
$$\|A_{\mu,s}\|_{\HT_{q,\alpha}^p\to L^t(\s_n)}\lesssim\sup_{\B_n}G_{\mu}^{1/s}.$$
We next consider the case of $p<t$. Choose $p<\hat{p}<t$. Then it is clear that $\beta>-n-1$, and
$$\left(\frac{\hat{p}}{t}-1\right)n-1+\frac{\hat{p}(n+1+\beta)}{s}=\left(\frac{\hat{p}}{p}-1\right)n-1+\frac{\hat{p}(n+1+\alpha)}{q}.$$
Consequently, it follows from Lemma \ref{inclu} that
\begin{equation}\label{inclus}
\HT_{q,\alpha}^p\subset A^{\hat{p}}_{\left(\frac{\hat{p}}{p}-1\right)n-1+\frac{\hat{p}(n+1+\alpha)}{q}}\subset\HT_{s,\beta}^t
\end{equation}
with bounded inclusions. Since $f\in\HT^p_{q,\alpha}$, we may apply \eqref{xi} and \eqref{inclus} to obtain that
\begin{align*}
\int_{\s_n}\left(A_{\mu,s}f(\xi)\right)^td\sigma(\xi)
&\lesssim\sup_{\B_n}G_{\mu}^{t/s}\cdot
    \int_{\s_n}\left(\int_{\widetilde{\Gamma}(\xi)}|f|^sdv_{\beta}\right)^{t/s}d\sigma(\xi)\\
&\asymp\sup_{\B_n}G_{\mu}^{t/s}\cdot\|f\|^t_{\HT_{s,\beta}^t}\\
&\lesssim\sup_{\B_n}G_{\mu}^{t/s}\cdot\|f\|^t_{\HT_{q,\alpha}^p},
\end{align*}
which implies that $A_{\mu,s}:\HT_{q,\alpha}^p\to L^t(\s_n)$ is bounded, and
$$\|A_{\mu,s}\|_{\HT_{q,\alpha}^p\to L^t(\s_n)}\lesssim\sup_{\B_n}G_{\mu}^{1/s}.$$
The proof is complete.
\end{proof}

In the rest part of this section, we always let $Z=\{a_k\}$ be an $\frac{r}{2}$-lattice, and for $\delta>0$, define the sequence $\eta^{(\delta)}=\{\eta^{(\delta)}_k\}$ by
$$\eta^{(\delta)}_k=\widehat{\mu}_{\delta}(a_k)(1-|a_k|^2)^{\frac{(q-s)(n+1+\alpha)}{q}}.$$
To handle other cases of the boundedness of $A_{\mu,s}:\HT_{q,\alpha}^p\to L^t(\s_n)$, we need the following proposition.

\begin{proposition}\label{nece}
Let $\alpha>-n-1$ and $0<p,q,s,t<\infty$. Suppose that $A_{\mu,s}:\HT_{q,\alpha}^p\to L^t(\s_n)$ is bounded. Then for any $\lambda=\{\lambda_k\}\in T^p_q(Z)$,
$$\int_{\s_n}\left(\sum_{a_k\in\Gamma(\xi)}|\lambda_k|^s\eta^{(2r)}_k\right)^{t/s}d\sigma(\xi)\lesssim
\|A_{\mu,s}\|^{t}_{\HT_{q,\alpha}^p\to L^t(\s_n)}\cdot\|\lambda\|^t_{T_q^p(Z)}.$$
\end{proposition}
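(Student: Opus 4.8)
The plan is to test the assumed boundedness of $A_{\mu,s}$ against the randomized atomic functions furnished by Lemma \ref{test}, and then to linearize the outcome via Khinchine's and Kahane's inequalities. Fix $\theta>n\max\{1,q/p,1/p,1/q\}$, let $\{r_k\}$ be the Rademacher functions, and for $\tau\in[0,1]$ set
$$f_\tau(z):=S^{\theta}_Z(\{r_k(\tau)\lambda_k\})(z)=\sum_k r_k(\tau)\lambda_k\frac{(1-|a_k|^2)^{\theta}}{(1-\langle z,a_k\rangle)^{\theta+\frac{n+1+\alpha}{q}}}=:\sum_k r_k(\tau)g_k(z).$$
Since $|r_k(\tau)|=1$, Lemma \ref{test} yields $\|f_\tau\|_{\HT^p_{q,\alpha}}\lesssim\|\{r_k(\tau)\lambda_k\}\|_{T^p_q(Z)}=\|\lambda\|_{T^p_q(Z)}$ uniformly in $\tau$, so the boundedness of $A_{\mu,s}$ gives $\int_0^1\|A_{\mu,s}f_\tau\|^t_{L^t(\s_n)}\,d\tau\lesssim\|A_{\mu,s}\|^t_{\HT^p_{q,\alpha}\to L^t(\s_n)}\|\lambda\|^t_{T^p_q(Z)}$. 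The entire task reduces to bounding the left-hand side of the proposition by this quantity.

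To process the random average, I would apply Fubini to bring the $\tau$-integration inside the integral over $\s_n$, and then, for each fixed $\xi$, work in the quasi-Banach space $X=L^s\big(\widetilde{\Gamma}(\xi),d\mu/(1-|z|^2)^n\big)$. Kahane's inequality with exponents $t$ and $s$ (whose constants do not depend on $X$) gives $\int_0^1\|f_\tau\|_X^t\,d\tau\asymp\big(\int_0^1\|f_\tau\|_X^s\,d\tau\big)^{t/s}$, while Khinchine's inequality, applied pointwise in $z$, gives $\int_0^1|f_\tau(z)|^s\,d\tau\asymp\big(\sum_k|g_k(z)|^2\big)^{s/2}$. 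Combining the two and using the aperture-independence of the area function (so that replacing $\Gamma$ by $\widetilde{\Gamma}$ costs only a constant in the $L^t(\s_n)$-norm), I obtain
$$\int_{\s_n}\left(\int_{\widetilde{\Gamma}(\xi)}\Big(\sum_k|g_k(z)|^2\Big)^{s/2}\frac{d\mu(z)}{(1-|z|^2)^n}\right)^{t/s}d\sigma(\xi)\lesssim\|A_{\mu,s}\|^t_{\HT^p_{q,\alpha}\to L^t(\s_n)}\|\lambda\|^t_{T^p_q(Z)}.$$

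The remaining, and decisive, step is to dominate $\sum_{a_k\in\Gamma(\xi)}|\lambda_k|^s\eta^{(2r)}_k$ by the inner square-function integral. For this I would use that $|g_k(z)|\asymp|\lambda_k|(1-|a_k|^2)^{-\frac{n+1+\alpha}{q}}$ on $D(a_k,2r)$, where $1-|z|^2\asymp1-|a_k|^2\asymp|1-\langle z,a_k\rangle|$, together with $\int_{D(a_k,2r)}d\mu/(1-|z|^2)^n\asymp\mu(D(a_k,2r))(1-|a_k|^2)^{-n}$; a direct exponent count then identifies $|\lambda_k|^s\eta^{(2r)}_k\asymp\int_{D(a_k,2r)}|g_k|^s\,d\mu/(1-|z|^2)^n$. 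Summing over $a_k\in\Gamma(\xi)$, the pointwise bound $|g_k(z)|^s\le\big(\sum_j|g_j(z)|^2\big)^{s/2}$ controls each diagonal term, and the key structural fact that, for an $\tfrac r2$-lattice, each point of $\B_n$ lies in at most $N$ of the balls $D(a_k,2r)=D(a_k,4\delta)$ (property (iii) of the lattice) controls the overcounting; moreover $z\in D(a_k,2r)$ with $a_k\in\Gamma(\xi)$ forces $z\in\widetilde{\Gamma}(\xi)$. This delivers $\sum_{a_k\in\Gamma(\xi)}|\lambda_k|^s\eta^{(2r)}_k\lesssim\int_{\widetilde{\Gamma}(\xi)}\big(\sum_j|g_j|^2\big)^{s/2}\,d\mu/(1-|z|^2)^n$, and raising to the power $t/s$ and integrating in $\xi$ closes the argument.

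The hard part will be precisely this last matching of scales: the radius $2r$ in $\eta^{(2r)}_k$ is forced to equal $4\delta$ so that the bounded-overlap property applies, which is what allows the \emph{larger} ball $D(a_k,2r)$ (rather than a disjointified smaller one) to appear while still summing the diagonal contributions into the full square function. The interplay between the pointwise Khinchine estimate, the $X$-valued Kahane estimate, and this geometric bookkeeping is where all the care is needed; the rest is routine Fubini and aperture-change manipulation.
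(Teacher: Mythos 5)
Your proposal is correct and takes essentially the same route as the paper's proof: the same randomized test functions $S^{\theta}_Z(\{r_k(\tau)\lambda_k\})$ via Lemma \ref{test}, Fubini plus Kahane (in $L^s$ on the cone, exponents $t$ and $s$) and pointwise Khinchine, and then the diagonal extraction on $D(a_k,2r)$ using $|1-\langle z,a_k\rangle|\asymp 1-|a_k|^2$, the bounded overlap of the balls $D(a_k,4\delta)=D(a_k,2r)$, and an aperture change. The only cosmetic difference is that you enlarge the $z$-cone to $\widetilde{\Gamma}(\xi)$ and apply the pointwise bound $|g_k|^s\leq\bigl(\sum_j|g_j|^2\bigr)^{s/2}$ after integrating, whereas the paper keeps $\Gamma(\xi)$, lower-bounds the square function pointwise, and shrinks the $a_k$-cone to $\Gamma_{\gamma}(\xi)$ before invoking aperture equivalence --- equivalent bookkeeping with the same ingredients.
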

\begin{proof}
Fix $\theta>n\max\{1,q/p,1/p,1/q\}$. For any $\lambda=\{\lambda_k\}\in T_q^p(Z)$, define
$$F_{\tau}(z)=\sum_{k=1}^{\infty}\lambda_kr_k(\tau)f_k(z),\quad z\in\B_n,\quad \tau\in[0,1],$$
where $\{r_k\}$ is the sequence of Rademacher functions on $[0,1]$, and
$$f_k(z)=\frac{(1-|a_k|^2)^{\theta}}{(1-\langle z,a_k\rangle)^{\theta+\frac{n+1+\alpha}{q}}}.$$
Then by Lemma \ref{test}, $F_{\tau}\in\HT^p_{q,\alpha}$, and $\|F_{\tau}\|_{\HT_{q,\alpha}^p}\lesssim\|\lambda\|_{T_q^p(Z)}$ for any $\tau\in[0,1]$. The boundedness of $A_{\mu,s}:\HT_{q,\alpha}^p\to L^t(\s_n)$ then gives that
\begin{align}\label{inte}
\nonumber\left\|A_{\mu,s}F_{\tau}\right\|^t_{L^t(\s_n)}=
\int_{\s_n}\left(\int_{\Gamma(\xi)}\left|\sum_{k=1}^{\infty}\lambda_kr_k(\tau)f_k(z)\right|^s\frac{d\mu(z)}{(1-|z|^2)^n}\right)^{t/s}
    d\sigma(\xi)\\
\lesssim\|A_{\mu,s}\|^{t}_{\HT_{q,\alpha}^p\to L^t(\s_n)}\cdot\|\lambda\|^t_{T_q^p(Z)}.
\end{align}
Applying Fubini's theorem, Kahane and Khinchine’s inequalities, we obtain that
\begin{align*}
&\int_0^1\left\|A_{\mu,s}F_{\tau}\right\|^t_{L^t(\s_n)}d\tau\\
&\ =\int_{\s_n}\int_0^1\left(\int_{\Gamma(\xi)}\left|\sum_{k=1}^{\infty}\lambda_kr_k(\tau)f_k(z)\right|^s
    \frac{d\mu(z)}{(1-|z|^2)^n}\right)^{t/s}d\tau d\sigma(\xi)\\
&\ \asymp\int_{\s_n}\left(\int_0^1\int_{\Gamma(\xi)}\left|\sum_{k=1}^{\infty}\lambda_kr_k(\tau)f_k(z)\right|^s
    \frac{d\mu(z)}{(1-|z|^2)^n}d\tau\right)^{t/s}d\sigma(\xi)\\
&\ \asymp\int_{\s_n}\left(\int_{\Gamma(\xi)}\left(\sum_{k=1}^{\infty}|\lambda_k|^2|f_k(z)|^2\right)^{s/2}
    \frac{d\mu(z)}{(1-|z|^2)^n}\right)^{t/s}d\sigma(\xi).
\end{align*}
Noting that each point $z\in\B_n$ belongs to at most $N$ of the sets $D(a_k,2r)$, we get
\begin{align*}
\left(\sum_{k=1}^{\infty}|\lambda_k|^2|f_k(z)|^2\right)^{s/2}
&\geq\left(\sum_{k=1}^{\infty}|\lambda_k|^2\frac{(1-|a_k|^2)^{2\theta}\chi_{D(a_k,2r)}(z)}
    {|1-\langle z,a_k\rangle|^{2\theta+\frac{2(n+1+\alpha)}{q}}}\right)^{s/2}\\
&\gtrsim\sum_{k=1}^{\infty}|\lambda_k|^s\frac{\chi_{D(a_k,2r)}(z)}{(1-|a_k|^2)^{\frac{s(n+1+\alpha)}{q}}}.
\end{align*}
Combining this with the fact that there exists $\gamma>1$ such that $D(z,2r)\subset\Gamma(\xi)$ if $z\in\Gamma_{\gamma}(\xi)$, we arrive at
\begin{align*}
&\int_0^1\left\|A_{\mu,s}F_{\tau}\right\|^t_{L^t(\s_n)}d\tau\\
&\ \gtrsim\int_{\s_n}\left(\int_{\Gamma(\xi)}
    \sum_{k=1}^{\infty}|\lambda_k|^s\frac{\chi_{D(a_k,2r)}(z)}{(1-|a_k|^2)^{\frac{s(n+1+\alpha)}{q}}}
    \frac{d\mu(z)}{(1-|z|^2)^n}\right)^{t/s}d\sigma(\xi)\\
&\ \gtrsim\int_{\s_n}\left(\sum_{a_k\in\Gamma_{\gamma}(\xi)}|\lambda_k|^s
    \frac{\mu(D(a_k,2r))}{(1-|a_k|^2)^{n+\frac{s(n+1+\alpha)}{q}}}\right)^{t/s}d\sigma(\xi)\\
&\ \asymp\int_{\s_n}\left(\sum_{a_k\in\Gamma(\xi)}|\lambda_k|^s\eta^{(2r)}_k\right)^{t/s}d\sigma(\xi),
\end{align*}
which, in conjunction with \eqref{inte}, completes the proof.
\end{proof}

We also need the following discretization lemma to deal with the case of $p=t$ and $q>s$.

\begin{lemma}\label{disCM}
Let $\alpha>-n-1$ and $0<s<q<\infty$. Consider the following condition:
\begin{enumerate}
	\item [(a)] $\eta^{(2r)}\in T^{\infty}_{\frac{q}{q-s}}(Z)$;
	\item [(b)] The measure
	$$d\nu_{\mu}(z):=\widehat{\mu}_r(z)^{\frac{q}{q-s}}dv_{\alpha+n}(z)$$
	is a Carleson measure;
	\item [(c)] $\eta^{(r/2)}\in T^{\infty}_{\frac{q}{q-s}}(Z)$.
\end{enumerate}
Then we have the implications (a)$\Longrightarrow$(b)$\Longrightarrow$(c). Moreover,
$$\left\|\eta^{(r/2)}\right\|_{T^{\infty}_{\frac{q}{q-s}}(Z)}\lesssim\|\nu_{\mu}\|^{\frac{q-s}{q}}_{CM}\lesssim
\left\|\eta^{(2r)}\right\|_{T^{\infty}_{\frac{q}{q-s}}(Z)}.$$
\end{lemma}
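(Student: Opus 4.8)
The plan is to transcribe all three conditions into statements about ordinary Carleson measures and then compare the Carleson norms of one continuous measure against two discrete ones. Write $u=\frac{q}{q-s}$, which exceeds $1$ since $0<s<q$. Recall from the discussion of sequence tent spaces that $\eta^{(\delta)}\in T^{\infty}_{u}(Z)$ if and only if the discrete measure $d\mu_{\eta^{(\delta)}}=\sum_k(\eta^{(\delta)}_k)^u(1-|a_k|^2)^n\delta_{a_k}$ is a Carleson measure, with $\|\eta^{(\delta)}\|_{T^{\infty}_{u}(Z)}\asymp\|\mu_{\eta^{(\delta)}}\|_{CM}^{1/u}$. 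Using the identity $\frac{(q-s)(n+1+\alpha)}{q}\cdot u=n+1+\alpha$, the point masses simplify to $(\eta^{(\delta)}_k)^u(1-|a_k|^2)^n=\widehat{\mu}_{\delta}(a_k)^u(1-|a_k|^2)^{2n+1+\alpha}$, while the measure in (b) is $d\nu_{\mu}(z)=\widehat{\mu}_r(z)^u(1-|z|^2)^{n+\alpha}dv(z)$. Thus the whole lemma reduces to proving $\|\nu_{\mu}\|_{CM}\lesssim\|\mu_{\eta^{(2r)}}\|_{CM}$ (which yields (a)$\Rightarrow$(b) and the right-hand estimate) and $\|\mu_{\eta^{(r/2)}}\|_{CM}\lesssim\|\nu_{\mu}\|_{CM}$ (which yields (b)$\Rightarrow$(c) and the left-hand estimate), after raising to the power $1/u=\frac{q-s}{q}$.

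The two engines for the comparison are elementary. First, for $z\in D(a_k,r/2)$ the inclusions $D(a_k,r/2)\subset D(z,r)\subset D(a_k,2r)$ (triangle inequality in $\beta$), together with $1-|z|^2\asymp1-|a_k|^2$, give the two-sided control
$$\widehat{\mu}_{r/2}(a_k)\lesssim\widehat{\mu}_r(z)\lesssim\widehat{\mu}_{2r}(a_k),\quad z\in D(a_k,r/2);$$
this is exactly why the radii $r/2$ and $2r$ show up in (c) and (a). Second, the standard estimate $|1-\langle z,\xi\rangle|\asymp|1-\langle a_k,\xi\rangle|$ for $z\in D(a_k,r/2)$, uniformly in $\xi\in\s_n$, shows that a Bergman ball $D(a_k,r/2)$ meeting a non-isotropic ball $B_{\delta}(\xi)$ forces $a_k\in B_{C\delta}(\xi)$, and conversely $a_k\in B_{\delta}(\xi)$ forces $D(a_k,r/2)\subset B_{C\delta}(\xi)$, with $C=C(r,n)$ independent of $\delta$.

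For $\|\nu_{\mu}\|_{CM}\lesssim\|\mu_{\eta^{(2r)}}\|_{CM}$, I would fix $B_{\delta}(\xi)$, cover it by the lattice balls $\{D(a_k,r/2)\}$, and on each piece use the upper bound $\widehat{\mu}_r(z)\lesssim\widehat{\mu}_{2r}(a_k)$ together with $v(D(a_k,r/2))\asymp(1-|a_k|^2)^{n+1}$ to obtain $\int_{D(a_k,r/2)}d\nu_{\mu}\lesssim\widehat{\mu}_{2r}(a_k)^u(1-|a_k|^2)^{2n+1+\alpha}$. Summing over the balls that meet $B_{\delta}(\xi)$ and noting that their centres lie in $B_{C\delta}(\xi)$ gives $\nu_{\mu}(B_{\delta}(\xi))\lesssim\mu_{\eta^{(2r)}}(B_{C\delta}(\xi))\lesssim\|\mu_{\eta^{(2r)}}\|_{CM}\,\delta^n$. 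For the reverse inequality, I would fix $B_{\delta}(\xi)$ and, for each $a_k\in B_{\delta}(\xi)$, use the lower bound $\widehat{\mu}_r(z)\gtrsim\widehat{\mu}_{r/2}(a_k)$ on $D(a_k,r/2)$ to get $\widehat{\mu}_{r/2}(a_k)^u(1-|a_k|^2)^{2n+1+\alpha}\lesssim\nu_{\mu}(D(a_k,r/2))$; summing over $a_k\in B_{\delta}(\xi)$ and invoking the bounded-overlap property (iii) of the lattice turns $\sum_{a_k\in B_{\delta}(\xi)}\nu_{\mu}(D(a_k,r/2))$ into at most $N\,\nu_{\mu}\big(\bigcup_k D(a_k,r/2)\big)\leq N\,\nu_{\mu}(B_{C\delta}(\xi))\lesssim\|\nu_{\mu}\|_{CM}\,\delta^n$.

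The calculations are routine; the one point that needs care is the bookkeeping of the three radii so that every Bergman-ball inclusion genuinely holds, and the uniformity in $\delta$ of the comparison between Bergman balls and non-isotropic balls, in particular that it persists as $\delta\to0$ near the boundary. Everything else is the two-sided control of $\widehat{\mu}$ combined with the bounded overlap of the lattice.
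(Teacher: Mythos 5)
Your proof is correct, and it rests on the same local ingredients as the paper's: the identification of membership in $T^{\infty}_{\frac{q}{q-s}}(Z)$ with a discrete Carleson measure condition, the two-sided control $\widehat{\mu}_{r/2}(a_k)\lesssim\widehat{\mu}_r(z)\lesssim\widehat{\mu}_{2r}(a_k)$ for $z\in D(a_k,r/2)$, the volume estimate $v_{\alpha+n}(D(a_k,r/2))\asymp(1-|a_k|^2)^{2n+1+\alpha}$, and the bounded overlap of the lattice. Where you genuinely diverge is in the global assembly: the paper never touches the non-isotropic balls $B_{\delta}(\xi)$ at all, but instead tests both $\nu_{\mu}$ and the discrete measures against the kernels $(1-|a|^2)^{\theta}|1-\langle z,a\rangle|^{-n-\theta}$ via the integral characterization \eqref{CM}; there the single comparability $|1-\langle z,a\rangle|\asymp|1-\langle a_k,a\rangle|$ on a Bergman ball does all the geometric work, and taking the supremum over $a\in\B_n$ converts the resulting sums directly into Carleson norms. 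Your route uses only the definition of Carleson measures on balls, which is more elementary, but it is precisely this choice that forces the extra geometric input you flag yourself: the uniform (in $\delta$ and $\xi$) compatibility that $D(a_k,r/2)\cap B_{\delta}(\xi)\neq\emptyset$ implies $a_k\in B_{C\delta}(\xi)$ and that $a_k\in B_{\delta}(\xi)$ implies $D(a_k,r/2)\subset B_{C\delta}(\xi)$. This does hold, with $C=C(r,n)$ independent of $\delta$, as a consequence of the standard estimate $|1-\langle z,u\rangle|\asymp|1-\langle a_k,u\rangle|$ for $z\in D(a_k,r/2)$, uniformly in $u\in\overline{\B_n}$ (see \cite{Zhuball}), so the point you identify as ``needing care'' is genuinely closable; with it, your two ball-counting estimates yield exactly $\|\nu_{\mu}\|_{CM}\lesssim\left\|\eta^{(2r)}\right\|^{\frac{q}{q-s}}_{T^{\infty}_{\frac{q}{q-s}}(Z)}$ and $\left\|\eta^{(r/2)}\right\|^{\frac{q}{q-s}}_{T^{\infty}_{\frac{q}{q-s}}(Z)}\lesssim\|\nu_{\mu}\|_{CM}$, which after raising to the power $\frac{q-s}{q}$ is the full statement. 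One further remark: the paper writes out only (a)$\Rightarrow$(b) and leaves (b)$\Rightarrow$(c) as ``similar''; that implicit direction also requires the bounded overlap property of the lattice, just as your second estimate does, so your method introduces no asymmetry --- it simply trades the kernel characterization \eqref{CM} for the ball geometry.
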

\begin{proof}
We only prove the implication (a)$\Longrightarrow$(b). The other is similar. Suppose $\eta^{(2r)}\in T^{\infty}_{\frac{q}{q-s}}(Z)$. Then the measure
$$\sum_{k=1}^{\infty}\left(\eta^{(2r)}_k\right)^{\frac{q}{q-s}}(1-|a_k|^2)^n\delta_{a_k}$$
is a Carleson measure. Hence for any $\theta>0$ and $a\in\B_n$, by the characterization \eqref{CM},
\begin{align*}
\int_{\B_n}\frac{(1-|a|^2)^{\theta}}{|1-\langle z,a\rangle|^{n+\theta}}d\nu_{\mu}(z)
&\leq\sum_{k=1}^{\infty}\int_{D(a_k,r)}\frac{(1-|a|^2)^{\theta}}{|1-\langle z,a\rangle|^{n+\theta}}
    \widehat{\mu}_r(z)^{\frac{q}{q-s}}dv_{\alpha+n}(z)\\
&\lesssim\sum_{k=1}^{\infty}\frac{(1-|a|^2)^{\theta}}{|1-\langle a_k,a\rangle|^{n+\theta}}
    \widehat{\mu}_{2r}(a_k)^{\frac{q}{q-s}}(1-|a_k|^2)^{2n+1+\alpha}\\
&=\sum_{k=1}^{\infty}\frac{(1-|a|^2)^{\theta}}{|1-\langle a_k,a\rangle|^{n+\theta}}\left(\eta^{(2r)}_k\right)^{\frac{q}{q-s}}(1-|a_k|^2)^n\\
&\lesssim\left\|\eta^{(2r)}\right\|^{\frac{q}{q-s}}_{T^{\infty}_{\frac{q}{q-s}}(Z)}.
\end{align*}
Therefore, $\nu_{\mu}$ is a Carleson measure, and $\|\nu_{\mu}\|_{CM}\lesssim
\left\|\eta^{(2r)}\right\|^{\frac{q}{q-s}}_{T^{\infty}_{\frac{q}{q-s}}(Z)}$.
\end{proof}

\begin{theorem}\label{case2}
Let $\alpha>-n-1$, $p=t$ and $q>s$. Then $A_{\mu,s}:\HT_{q,\alpha}^p\to L^t(\s_n)$ is bounded if and only if
$$d\nu_{\mu}(z):=\widehat{\mu}_r(z)^{\frac{q}{q-s}}dv_{\alpha+n}(z)$$
is a Carleson measure. Moreover,
$$\|A_{\mu,s}\|_{\HT_{q,\alpha}^p\to L^t(\s_n)}\asymp\|\nu_{\mu}\|^{\frac{q-s}{qs}}_{CM}.$$
\end{theorem}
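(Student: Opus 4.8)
The plan is to transfer everything to the sequence tent spaces attached to the $\tfrac r2$-lattice $Z=\{a_k\}$ and to exploit the single instance of the factorization theorem (Theorem \ref{factor}) for the present indices,
$$T^{p/s}_1(Z)=T^{p/s}_{q/s}(Z)\cdot T^{\infty}_{q/(q-s)}(Z),$$
which is legitimate since $q/s>1$ and its conjugate exponent is $q/(q-s)$. Write $M:=\|A_{\mu,s}\|_{\HT^p_{q,\alpha}\to L^t(\s_n)}$ and recall $t=p$ throughout.

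For the sufficiency, assume $\nu_\mu$ is a Carleson measure. Taking $\delta=r/2$ in \eqref{ele1} and setting $\beta_k:=\left(\int_{D(a_k,r)}|f|^q\,dv_\alpha\right)^{s/q}$, I obtain $\left(A_{\mu,s}f(\xi)\right)^s\lesssim\sum_{a_k\in\widetilde\Gamma(\xi)}\beta_k\,\eta^{(r/2)}_k$, so raising to the power $p/s$ and integrating over $\s_n$ yields $\|A_{\mu,s}f\|^p_{L^p(\s_n)}\lesssim\|\beta\cdot\eta^{(r/2)}\|^{p/s}_{T^{p/s}_1(Z)}$. The forward half of Theorem \ref{factor} bounds the right-hand side by $\left(\|\beta\|_{T^{p/s}_{q/s}(Z)}\,\|\eta^{(r/2)}\|_{T^{\infty}_{q/(q-s)}(Z)}\right)^{p/s}$; the finite-overlap property of the lattice together with \eqref{EqG} gives $\|\beta\|_{T^{p/s}_{q/s}(Z)}\lesssim\|f\|^s_{\HT^p_{q,\alpha}}$, while the implication (b)$\Rightarrow$(c) of Lemma \ref{disCM} gives $\|\eta^{(r/2)}\|_{T^{\infty}_{q/(q-s)}(Z)}\lesssim\|\nu_\mu\|^{(q-s)/q}_{CM}$. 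Collecting these and taking a $p$-th root yields $M\lesssim\|\nu_\mu\|^{(q-s)/(qs)}_{CM}$.

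For the necessity, assume $A_{\mu,s}$ is bounded. With $t=p$ and $\beta_k=|\lambda_k|^s$, Proposition \ref{nece} is precisely the multiplier estimate
$$\|\beta\cdot\eta^{(2r)}\|_{T^{p/s}_1(Z)}\lesssim M^s\,\|\beta\|_{T^{p/s}_{q/s}(Z)}\qquad(\beta\geq0).$$
The aim is to upgrade this to $\eta^{(2r)}\in T^{\infty}_{q/(q-s)}(Z)$; once that is in hand, the implication (a)$\Rightarrow$(b) of Lemma \ref{disCM} shows $\nu_\mu$ is a Carleson measure with $\|\nu_\mu\|^{(q-s)/(qs)}_{CM}\lesssim M$, which combined with the sufficiency gives the asserted norm comparison. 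To identify the multipliers I would use duality: since $q/s>1$, Theorem \ref{dual} identifies $T^{\infty}_{q/(q-s)}(Z)$ with the dual of $T^1_{q/s}(Z)$ under the pairing $\langle c,d\rangle=\sum_k c_k\overline{d_k}(1-|a_k|^2)^n$, so it suffices to prove $\sum_k c_k\,\eta^{(2r)}_k(1-|a_k|^2)^n\lesssim M^s\|c\|_{T^1_{q/s}(Z)}$ for all $c\geq0$. When $p\geq s$ this is quick: if $p=s$ the multiplier estimate already sits at the first index $1$ and gives the bound directly, while if $p>s$ the converse half of Theorem \ref{factor} factors $c=\beta\cdot\gamma$ with $\beta\in T^{p/s}_{q/s}(Z)$ and $\gamma\in T^{(p/s)'}_{\infty}(Z)=\bigl(T^{p/s}_1(Z)\bigr)^{*}$, whence $\sum_k c_k\eta^{(2r)}_k(1-|a_k|^2)^n=\langle\beta\cdot\eta^{(2r)},\gamma\rangle\leq\|\beta\cdot\eta^{(2r)}\|_{T^{p/s}_1(Z)}\|\gamma\|_{T^{(p/s)'}_{\infty}(Z)}$ is controlled by the multiplier estimate.

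I expect the genuine obstacle to be this converse multiplier inclusion in the range $p<s$, i.e. when the outer exponent $p/s$ drops below $1$, since there both the factorization $T^1_{q/s}=T^{p/s}_{q/s}\cdot T^{(p/s)'}_{\infty}$ and the duality $\bigl(T^{p/s}_1\bigr)^{*}=T^{(p/s)'}_{\infty}$ used above cease to be available. In that regime I would instead test the multiplier estimate against the localized sequences $\beta_k=(\eta^{(2r)}_k)^{q/(q-s)-1}\chi_{B_\delta(\zeta)}(a_k)$ attached to the non-isotropic balls $B_\delta(\zeta)$, which collapses both sides to the single area function $\Psi_\delta(\xi)=\sum_{a_k\in\Gamma(\xi)\cap B_\delta(\zeta)}(\eta^{(2r)}_k)^{q/(q-s)}$ and yields the self-improving inequality $\int_{\s_n}\Psi_\delta^{p/s}\,d\sigma\lesssim M^{p}\int_{\s_n}\Psi_\delta^{p/q}\,d\sigma$. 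Turning this, uniformly in $\zeta$ and $\delta$, into the actual Carleson growth $\int_{\s_n}\Psi_\delta\,d\sigma\lesssim M^{sq/(q-s)}\delta^n$ for the discrete measure $\sum_k(\eta^{(2r)}_k)^{q/(q-s)}(1-|a_k|^2)^n\delta_{a_k}$ needs more than a single application of Hölder's inequality once $p/s<1$ — the peaks of $\Psi_\delta$ over the Whitney layers of $B_\delta(\zeta)$ must be handled by a covering or stopping-time argument — and this is the delicate point. Equivalently, the entire necessity rests on the clean statement that the pointwise multipliers of $T^{p/s}_{q/s}(Z)$ into $T^{p/s}_1(Z)$ are exactly $T^{\infty}_{q/(q-s)}(Z)$ for all $0<p/s<\infty$, which is where I would concentrate the effort.
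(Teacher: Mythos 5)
Your sufficiency argument and your necessity argument in the range $p\geq s$ are correct and essentially coincide with the paper's: the estimate \eqref{ele1} with $\delta=r/2$, the product inclusion $T^{p/s}_{q/s}(Z)\cdot T^{\infty}_{\frac{q}{q-s}}(Z)\subset T^{p/s}_{1}(Z)$ from Theorem \ref{factor}, and Lemma \ref{disCM} give $\|A_{\mu,s}\|\lesssim\|\nu_{\mu}\|^{\frac{q-s}{qs}}_{CM}$, and for $p\geq s$ your duality--factorization reduction of the multiplier estimate to Proposition \ref{nece} is sound. The genuine gap is exactly where you place it: the case $0<p<s$ is a full part of the theorem (only $q>s$ and $p=t$ are assumed), and there your proof stops at a conjecture. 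The self-improving inequality $\int_{\s_n}\Psi_{\delta}^{p/s}\,d\sigma\lesssim M^{p}\int_{\s_n}\Psi_{\delta}^{p/q}\,d\sigma$ controls a larger concave power by a smaller one and does not by itself produce the linear Carleson growth $\int_{\s_n}\Psi_{\delta}\,d\sigma\lesssim M^{\frac{qs}{q-s}}\delta^{n}$; the covering or stopping-time argument you invoke is never carried out, so the necessity is incomplete in precisely the regime you flag.

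The missing idea --- and the paper's actual device --- is a power trick that removes the restriction $p\geq s$ altogether. Since the quasi-norms satisfy $\bigl\|\eta^{(2r)}\bigr\|_{T^{\infty}_{\frac{q}{q-s}}(Z)}=\bigl\|\bigl(\eta^{(2r)}\bigr)^{1/\sigma}\bigr\|^{\sigma}_{T^{\infty}_{\frac{q\sigma}{q-s}}(Z)}$, one fixes $\sigma>1$ so large that $p\sigma>s$ and proves instead that $\bigl(\eta^{(2r)}\bigr)^{1/\sigma}\in T^{\infty}_{\frac{q\sigma}{q-s}}(Z)$. At the raised indices the machinery you wanted is restored for \emph{every} $p$: Theorem \ref{dual} gives
$$T^{\infty}_{\frac{q\sigma}{q-s}}(Z)=\left(T^{1}_{\left(\frac{q\sigma}{q-s}\right)'}(Z)\right)^{*},$$
Theorem \ref{factor} factors $T^{1}_{\left(\frac{q\sigma}{q-s}\right)'}(Z)=T^{\frac{p\sigma}{p\sigma-s}}_{\sigma'}(Z)\cdot T^{p\sigma/s}_{q\sigma/s}(Z)$ (legitimate exactly because $p\sigma>s$), so any test sequence factors as $c_{k}=d_{k}\lambda_{k}^{s/\sigma}$ with $\lambda\in T^{p}_{q}(Z)$ --- the space where Proposition \ref{nece} applies. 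H\"{o}lder inside the cone with exponents $(\sigma',\sigma)$ and then on $\s_{n}$ with exponents $\bigl(\tfrac{p\sigma}{p\sigma-s},\tfrac{p\sigma}{s}\bigr)$ bound the pairing $\bigl\langle c,(\eta^{(2r)})^{1/\sigma}\bigr\rangle_{T^{2}_{2}(Z)}$ by $M^{s/\sigma}\|c\|$, and Lemma \ref{disCM} then yields $\|\nu_{\mu}\|_{CM}\lesssim M^{\frac{qs}{q-s}}$. In particular this proves, uniformly in $0<p/s<\infty$ and with no covering argument, the clean multiplier statement you isolate at the end of your proposal; your $p>s$ argument is morally the same computation without the extra exponent $\sigma$, which is what fails to survive when $p/s$ drops below $1$.
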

\begin{proof}
We first consider the necessity. By Lemma \ref{disCM}, it suffices to show $\eta^{(2r)}\in T^{\infty}_{\frac{q}{q-s}}(Z)$, which is equivalent to
$$\left(\eta^{(2r)}\right)^{1/\sigma}:=\left\{\left(\eta_k^{(2r)}\right)^{1/\sigma}\right\}\in T^{\infty}_{\frac{q\sigma}{q-s}}(Z),$$
where $\sigma>1$ is large enough such that $p\sigma>s$.
We will accomplish this by the dual and factorization of sequence tent spaces. In fact, by Theorems \ref{dual} and \ref{factor}, we have
$$T^{\infty}_{\frac{q\sigma}{q-s}}(Z)=\left(T^1_{\left(\frac{q\sigma}{q-s}\right)'}(Z)\right)^*
=\left(T^{\frac{p\sigma}{p\sigma-s}}_{\sigma'}(Z)\cdot T^{p\sigma/s}_{q\sigma/s}(Z)\right)^*.$$
Take any $c=\{c_k\}\in T^1_{\left(\frac{q\sigma}{q-s}\right)'}(Z)$, and factor it as suggested as
$$c_k=d_k\lambda^{s/\sigma}_{k},\quad d=\{d_k\}\in T^{\frac{p\sigma}{p\sigma-s}}_{\sigma'}(Z),\quad \lambda=\{\lambda_k\}\in T^{p}_{q}(Z),$$
with
$$\|d\|_{T^{\frac{p\sigma}{p\sigma-s}}_{\sigma'}(Z)}\cdot\|\lambda\|^{s/\sigma}_{T^{p}_{q}(Z)}\lesssim \|c\|_{T^1_{\left(\frac{q\sigma}{q-s}\right)'}(Z)}.$$
Using this factorization, the estimate \eqref{EqG}, H\"{o}lder's inequality and Proposition \ref{nece}, we obtain that
\begin{align*}
\left|\left\langle c,\left(\eta^{(2r)}\right)^{1/\sigma}\right\rangle_{T^2_2(Z)}\right|
&\lesssim\int_{\s_n}\left(\sum_{a_k\in\Gamma(\xi)}|d_k||\lambda_k|^{s/\sigma}\left(\eta_k^{(2r)}\right)^{1/\sigma}\right)d\sigma(\xi)\\
&\leq\int_{\s_n}\left(\sum_{a_k\in\Gamma(\xi)}|d_k|^{\sigma'}\right)^{1/{\sigma'}}
    \left(\sum_{a_k\in\Gamma(\xi)}|\lambda_k|^{s}\eta_k^{(2r)}\right)^{1/\sigma}d\sigma(\xi)\\
&\leq\|d\|_{T^{\frac{p\sigma}{p\sigma-s}}_{\sigma'}(Z)}\cdot
    \left(\int_{\s_n}\left(\sum_{a_k\in\Gamma(\xi)}|\lambda_k|^{s}\eta_k^{(2r)}\right)^{t/s}d\sigma(\xi)\right)^{\frac{s}{t\sigma}}\\
&\lesssim\|d\|_{T^{\frac{p\sigma}{p\sigma-s}}_{\sigma'}(Z)}\cdot\|A_{\mu,s}\|^{s/\sigma}_{\HT_{q,\alpha}^p\to L^t(\s_n)}\cdot
    \|\lambda\|^{s/\sigma}_{T_q^p(Z)}\\
&\lesssim\|A_{\mu,s}\|^{s/\sigma}_{\HT_{q,\alpha}^p\to L^t(\s_n)}\cdot\|c\|_{T^1_{\left(\frac{q\sigma}{q-s}\right)'}(Z)}.
\end{align*}
By the duality of sequence tent spaces, we obtain that $\left(\eta^{(2r)}\right)^{1/\sigma}\in T^{\infty}_{\frac{q\sigma}{q-s}}(Z)$ with
$$\left\|\left(\eta^{(2r)}\right)^{1/\sigma}\right\|_{T^{\infty}_{\frac{q\sigma}{q-s}}(Z)}\lesssim
\|A_{\mu,s}\|^{s/\sigma}_{\HT_{q,\alpha}^p\to L^t(\s_n)}.$$
Therefore, Lemma \ref{disCM} implies that $\nu_{\mu}$ is a Carleson measure, and
$$\|\nu_{\mu}\|_{CM}\lesssim\left\|\eta^{(2r)}\right\|^{\frac{q}{q-s}}_{T^{\infty}_{\frac{q}{q-s}}(Z)}
=\left\|\left(\eta^{(2r)}\right)^{1/\sigma}\right\|^{\frac{q\sigma}{q-s}}_{T^{\infty}_{\frac{q\sigma}{q-s}}(Z)}\lesssim
\|A_{\mu,s}\|^{\frac{qs}{q-s}}_{\HT_{q,\alpha}^p\to L^t(\s_n)}.$$

We next look for the sufficiency. For any $f\in\HT^p_{q,\alpha}$, applying \eqref{ele1} with $\delta=r/2$ yields that
$$\left(A_{\mu,s}f(\xi)\right)^s
\lesssim\sum_{a_k\in\widetilde{\Gamma}(\xi)}\left(\int_{D(a_k,r)}|f|^qdv_{\alpha}\right)^{s/q}
\cdot\eta^{(r/2)}_k.$$
Since $\nu_{\mu}$ is a Carleson measure, Lemma \ref{disCM} implies that $\eta^{(r/2)}\in T^{\infty}_{\frac{q}{q-s}}(Z)$ with
$$\left\|\eta^{(r/2)}\right\|_{T^{\infty}_{\frac{q}{q-s}}(Z)}\lesssim\|\nu_{\mu}\|^{\frac{q-s}{q}}_{CM}.$$
On the other hand, it is easy to verify that
$$\left\{\left(\int_{D(a_k,r)}|f|^qdv_{\alpha}\right)^{s/q}\right\}\in T^{p/s}_{q/s}(Z),$$
and
$$\left\|\left\{\left(\int_{D(a_k,r)}|f|^qdv_{\alpha}\right)^{s/q}\right\}\right\|_{T^{p/s}_{q/s}(Z)}\lesssim\|f\|^s_{\HT_{q,\alpha}^p}.$$
Bearing in mind that $p=t$, we may apply Theorem \ref{factor} to obtain that
$$\left\{\left(\int_{D(a_k,r)}|f|^qdv_{\alpha}\right)^{s/q}\cdot\eta^{(r/2)}_k\right\}
\in T^{p/s}_{q/s}(Z)\cdot T^{\infty}_{\frac{q}{q-s}}(Z)=T^{t/s}_{1}(Z)$$
with
\begin{align*}
&\left\|\left\{\left(\int_{D(a_k,r)}|f|^qdv_{\alpha}\right)^{s/q}\cdot\eta^{(r/2)}_k\right\}\right\|_{T^{t/s}_{1}(Z)}\\
&\ \lesssim\left\|\left\{\left(\int_{D(a_k,r)}|f|^qdv_{\alpha}\right)^{s/q}\right\}\right\|_{T^{p/s}_{q/s}(Z)}\cdot
    \left\|\eta^{(r/2)}\right\|_{T^{\infty}_{\frac{q}{q-s}}(Z)}\\
&\ \lesssim\|\nu_{\mu}\|^{\frac{q-s}{q}}_{CM}\cdot\|f\|^s_{\HT_{q,\alpha}^p}.
\end{align*}
Consequently,
\begin{align*}
\int_{\s_n}\left(A_{\mu,s}f(\xi)\right)^td\sigma(\xi)
&\lesssim\int_{\s_n}\left(\sum_{a_k\in\widetilde{\Gamma}(\xi)}\left(\int_{D(a_k,r)}|f|^qdv_{\alpha}\right)^{s/q}
    \cdot\eta^{(r/2)}_k\right)^{t/s}d\sigma(\xi)\\
&\asymp\left\|\left\{\left(\int_{D(a_k,r)}|f|^qdv_{\alpha}\right)^{s/q}\cdot\eta^{(r/2)}_k\right\}\right\|^{t/s}_{T^{t/s}_{1}(Z)}\\
&\lesssim\|\nu_{\mu}\|^{\frac{t(q-s)}{qs}}_{CM}\cdot\|f\|^t_{\HT_{q,\alpha}^p}.
\end{align*}
Therefore, $A_{\mu,s}:\HT_{q,\alpha}^p\to L^t(\s_n)$ is bounded, and
$$\|A_{\mu,s}\|_{\HT_{q,\alpha}^p\to L^t(\s_n)}\lesssim\|\nu_{\mu}\|^{\frac{q-s}{qs}}_{CM},$$
which finishes the proof.
\end{proof}

To handle the case $p>t$, we need the following discretization lemma, which can be found in \cite[Lemmas 2.1 and 2.2]{LvPW}.

\begin{lemma}\label{dis}
Let $\beta\in\mathbb{R}$ and $0<p,q<\infty$. Then
\begin{align*}
\int_{\s_n}\left(\int_{\Gamma(\xi)}\widehat{\mu}_r(z)^pdv_{\beta}(z)\right)^qd\sigma(\xi)
\lesssim\int_{\s_n}\left(\sum_{a_k\in\Gamma(\xi)}\widehat{\mu}_{2r}(a_k)^p(1-|a_k|^2)^{n+1+\beta}\right)^qd\sigma(\xi)
\end{align*}
and
\begin{align*}
\int_{\s_n}\sup_{z\in\Gamma(\xi)}\widehat{\mu}_r(z)^p(1-|z|^2)^{\beta}d\sigma(\xi)
\lesssim\int_{\s_n}\sup_{a_k\in\Gamma(\xi)}\widehat{\mu}_{2r}(a_k)^p(1-|a_k|^2)^{\beta}d\sigma(\xi).
\end{align*}
\end{lemma}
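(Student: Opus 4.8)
The plan is to obtain both inequalities from one Bergman-metric covering argument that reduces the continuous quantities on the left-hand sides to discrete lattice sums taken over a slightly enlarged Kor\'anyi aperture, and then to discard the enlargement via the standard aperture-independence of (discrete) tent-space quasi-norms. I would work with the ambient $\frac{r}{2}$-lattice $Z=\{a_k\}$ (any $\delta$-lattice with $\delta\le r$ would serve equally well). The estimate driving everything is a single pointwise comparison: for $z\in D(a_k,r/2)$ one has $D(z,r)\subset D(a_k,2r)$ and, by the comparability $1-|z|^2\asymp1-|a_k|^2$ of the Bergman metric together with the monotonicity of $\mu$,
$$\widehat{\mu}_r(z)=\frac{\mu(D(z,r))}{(1-|z|^2)^{2n+1+\alpha}}\lesssim\frac{\mu(D(a_k,2r))}{(1-|a_k|^2)^{2n+1+\alpha}}=\widehat{\mu}_{2r}(a_k),$$
while $(1-|z|^2)^{\beta}\asymp(1-|a_k|^2)^{\beta}$ for every real $\beta$.

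For the first inequality I would cover $\B_n=\bigcup_kD(a_k,r/2)$ and estimate, for each fixed $\xi\in\s_n$,
$$\int_{\Gamma(\xi)}\widehat{\mu}_r(z)^p\,dv_{\beta}(z)\le\sum_{k}\int_{D(a_k,r/2)\cap\Gamma(\xi)}\widehat{\mu}_r(z)^p\,dv_{\beta}(z).$$
Only indices $k$ with $D(a_k,r/2)\cap\Gamma(\xi)\ne\emptyset$ contribute; for such $k$, choosing $z\in D(a_k,r/2)\cap\Gamma(\xi)$ gives $a_k\in D(z,r/2)$ with $z\in\Gamma(\xi)$, whence $a_k\in\widetilde{\Gamma}(\xi)$ by the change-of-aperture inclusion recorded in the excerpt. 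Inserting the pointwise bound above and the volume estimate $\int_{D(a_k,r/2)}dv_{\beta}\asymp(1-|a_k|^2)^{n+1+\beta}$, this yields
$$\int_{\Gamma(\xi)}\widehat{\mu}_r(z)^p\,dv_{\beta}(z)\lesssim\sum_{a_k\in\widetilde{\Gamma}(\xi)}\widehat{\mu}_{2r}(a_k)^p(1-|a_k|^2)^{n+1+\beta}.$$
Raising to the power $q$, integrating in $\xi$ over $\s_n$, and finally replacing $\widetilde{\Gamma}$ by $\Gamma$ through the aperture-independence of the discrete tent quasi-norm $\|\cdot\|_{T^q_1(Z)}$ gives the first claim.

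For the second inequality the scheme is identical but lighter, as no spatial summation or integration occurs. Each $z\in\Gamma(\xi)$ lies in some $D(a_k,r/2)$ with $a_k\in\widetilde{\Gamma}(\xi)$, so the two pointwise comparisons give $\widehat{\mu}_r(z)^p(1-|z|^2)^{\beta}\lesssim\widehat{\mu}_{2r}(a_k)^p(1-|a_k|^2)^{\beta}$, and hence $\sup_{z\in\Gamma(\xi)}\widehat{\mu}_r(z)^p(1-|z|^2)^{\beta}\lesssim\sup_{a_k\in\widetilde{\Gamma}(\xi)}\widehat{\mu}_{2r}(a_k)^p(1-|a_k|^2)^{\beta}$. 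Integrating over $\s_n$ and again passing from $\widetilde{\Gamma}$ to $\Gamma$ by aperture-independence of the $T^1_{\infty}$-type quasi-norm finishes the proof.

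The points that demand care, rather than any genuine obstacle, are purely organizational: ensuring the lattice radius is small enough relative to $r$ so that the ball inflation $D(z,r)\subset D(a_k,2r)$ holds, and tracking the aperture so that the covering produces only the controlled enlargement $\widetilde{\Gamma}(\xi)$, which can then be absorbed by the equivalence of tent-space quasi-norms across apertures. The finite-overlap property of the lattice is not even needed for these upper bounds, so the whole argument is a routine discretization, which is why the statement is simply quoted from \cite{LvPW}.
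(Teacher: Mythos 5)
Your proof is correct, and there is in fact nothing in the paper to compare it against: the paper does not prove Lemma \ref{dis} at all, but quotes it directly from \cite[Lemmas 2.1 and 2.2]{LvPW}. Your discretization is exactly the standard argument one expects behind that citation. The covering $\B_n=\bigcup_k D(a_k,r/2)$ from property (i) of the $\frac{r}{2}$-lattice, the inflation $D(z,r)\subset D(a_k,2r)$ for $z\in D(a_k,r/2)$ via the triangle inequality for the Bergman metric, the comparisons $1-|z|^2\asymp1-|a_k|^2$ and $v_{\beta}(D(a_k,r/2))\asymp(1-|a_k|^2)^{n+1+\beta}$, and the placement $a_k\in\widetilde{\Gamma}(\xi)$ through the recorded change-of-aperture inclusion are all sound; you are also right that the finite-overlap property of the lattice is irrelevant for these one-sided bounds. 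The only ingredient you invoke without justification is the aperture-independence of the discrete quasi-norms $\int_{\s_n}\bigl(\sum_{a_k\in\Gamma_{\gamma}(\xi)}b_k\bigr)^{q}d\sigma(\xi)$ and $\int_{\s_n}\sup_{a_k\in\Gamma_{\gamma}(\xi)}b_k\,d\sigma(\xi)$ for \emph{arbitrary} nonnegative sequences $\{b_k\}$; note that the aperture-independence stated in the introduction concerns continuous tent spaces, and for the sup-type norm the general (non-holomorphic) statement is the Fefferman--Stein ``windows'' estimate for non-tangential maximal functions rather than anything specific to $\HT^p_{q,\alpha}$. This fact is true and standard --- one can reduce the discrete case to the continuous one by smearing $b_k$ over the pairwise disjoint balls $D(a_k,r/8)$ --- and the paper itself uses it tacitly (for instance in the final display of the proof of Proposition \ref{nece}, where $\Gamma_{\gamma}$ is replaced by $\Gamma$), so a one-line justification or a reference at that step would make your write-up airtight.
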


\begin{theorem}\label{case3}
Let $\alpha>-n-1$, $p>t$ and $q>s$. Then $A_{\mu,s}:\HT_{q,\alpha}^p\to L^t(\s_n)$ is bounded if and only if
$$U_{\mu}(\xi):=\left(\int_{\Gamma(\xi)}\widehat{\mu}_r(z)^{\frac{q}{q-s}}dv_{\alpha}(z)\right)^{\frac{q-s}{q}}$$
belongs to $L^{\frac{pt}{s(p-t)}}(\s_n)$. Moreover,
$$\|A_{\mu,s}\|_{\HT_{q,\alpha}^p\to L^t(\s_n)}\asymp\|U_{\mu}\|^{1/s}_{L^{\frac{pt}{s(p-t)}}(\s_n)}.$$
\end{theorem}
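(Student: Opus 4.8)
The plan is to reduce the statement to a membership condition for the discretized sequence and then run a duality--factorization argument parallel to the proof of Theorem \ref{case2}. Throughout write $P=\frac{pt}{s(p-t)}$ and $Q=\frac{q}{q-s}$, both positive and finite since $p>t$ and $q>s$. Using the identity $Q\cdot\frac{(q-s)(n+1+\alpha)}{q}=n+1+\alpha$ one has $\big(\eta^{(\delta)}_k\big)^{Q}=\widehat{\mu}_{\delta}(a_k)^{Q}(1-|a_k|^2)^{n+1+\alpha}$, so that $\|\eta^{(\delta)}\|_{T^{P}_{Q}(Z)}^{P}=\int_{\s_n}\big(\sum_{a_k\in\Gamma(\xi)}\big(\eta^{(\delta)}_k\big)^{Q}\big)^{P/Q}d\sigma(\xi)$ with $P/Q=\frac{(q-s)}{q}P$. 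Consequently the condition $U_\mu\in L^{P}(\s_n)$ is comparable to $\eta\in T^{P}_{Q}(Z)$: the first inequality of Lemma \ref{dis} (with $\beta=\alpha$) gives $\|U_\mu\|_{L^P(\s_n)}\lesssim\|\eta^{(2r)}\|_{T^{P}_{Q}(Z)}$, while comparing $\widehat{\mu}_{r/2}(a_k)$ with $\widehat{\mu}_r(z)$ on $D(a_k,r/2)$ and using the finite overlap of a lattice yields $\|\eta^{(r/2)}\|_{T^{P}_{Q}(Z)}\lesssim\|U_\mu\|_{L^P(\s_n)}$. It therefore suffices to characterize boundedness of $A_{\mu,s}$ through the single discrete condition, with matching norm estimates.

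For the sufficiency, assume $U_\mu\in L^{P}(\s_n)$, so $\eta^{(r/2)}\in T^{P}_{Q}(Z)$. Starting from \eqref{ele1} with $\delta=r/2$, I would write, for $f\in\HT^p_{q,\alpha}$,
$$\left(A_{\mu,s}f(\xi)\right)^s\lesssim\sum_{a_k\in\widetilde{\Gamma}(\xi)}\left(\int_{D(a_k,r)}|f|^qdv_{\alpha}\right)^{s/q}\eta^{(r/2)}_k,$$
whose $t/s$-power, integrated over $\s_n$ and interpreted via aperture invariance, is the $T^{t/s}_{1}(Z)$ quasi-norm of the product sequence $\{b_k\eta^{(r/2)}_k\}$ with $b_k=\big(\int_{D(a_k,r)}|f|^qdv_{\alpha}\big)^{s/q}$. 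Since $t<p$ and $s<q$, the indices satisfy the hypotheses of Theorem \ref{factor} with $T^{t/s}_{1}(Z)=T^{p/s}_{q/s}(Z)\cdot T^{P}_{Q}(Z)$, so this norm is at most $\|\{b_k\}\|_{T^{p/s}_{q/s}(Z)}\cdot\|\eta^{(r/2)}\|_{T^{P}_{Q}(Z)}$. The first factor is $\lesssim\|f\|^s_{\HT^p_{q,\alpha}}$ exactly as in the proof of Theorem \ref{case2}, and the second is $\lesssim\|U_\mu\|_{L^P(\s_n)}$ by the reduction above, yielding $\|A_{\mu,s}\|_{\HT^p_{q,\alpha}\to L^t(\s_n)}\lesssim\|U_\mu\|^{1/s}_{L^P(\s_n)}$.

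The necessity is the heart of the matter. Assuming $A_{\mu,s}$ bounded, I aim to prove $\|\eta^{(2r)}\|_{T^{P}_{Q}(Z)}\lesssim\|A_{\mu,s}\|^s$. Because $P,Q$ may be small, I first fix $\sigma>1$ large enough that all exponents produced below lie in the admissible ranges, and estimate $\big(\eta^{(2r)}\big)^{1/\sigma}$ in $T^{P\sigma}_{Q\sigma}(Z)$ instead. By Theorem \ref{dual}, $T^{P\sigma}_{Q\sigma}(Z)=\big(T^{(P\sigma)'}_{(Q\sigma)'}(Z)\big)^{*}$, so it is enough to control the pairing $\big\langle c,\big(\eta^{(2r)}\big)^{1/\sigma}\big\rangle_{T^2_2(Z)}$ for $c\in T^{(P\sigma)'}_{(Q\sigma)'}(Z)$. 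Factoring $c_k=d_k\lambda_k^{s/\sigma}$ via Theorem \ref{factor}, where the bookkeeping $\frac{1}{(P\sigma)'}=\frac{s}{p\sigma}+\big(1-\frac{s}{\sigma t}\big)$ and $\frac{1}{(Q\sigma)'}=\frac{s}{q\sigma}+\frac{1}{\sigma'}$ forces $\lambda\in T^{p}_{q}(Z)$ and $d\in T^{\sigma t/(\sigma t-s)}_{\sigma'}(Z)$, I would pass to the area integral via \eqref{EqG}, apply H\"older in the sum with exponents $\sigma',\sigma$, and then H\"older in the $\xi$-integral with the conjugate exponents $\frac{\sigma t}{\sigma t-s},\frac{\sigma t}{s}$. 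The first resulting factor is $\|d\|_{T^{\sigma t/(\sigma t-s)}_{\sigma'}(Z)}$, while the second is precisely the quantity bounded in Proposition \ref{nece}, producing $\|A_{\mu,s}\|^{s/\sigma}\|\lambda\|^{s/\sigma}_{T^{p}_{q}(Z)}$; the factorization bound $\|d\|_{T^{\sigma t/(\sigma t-s)}_{\sigma'}(Z)}\|\lambda\|^{s/\sigma}_{T^{p}_{q}(Z)}\lesssim\|c\|_{T^{(P\sigma)'}_{(Q\sigma)'}(Z)}$ then closes the duality. Raising back to the power $\sigma$ gives $\|\eta^{(2r)}\|_{T^{P}_{Q}(Z)}\lesssim\|A_{\mu,s}\|^s$, and Lemma \ref{dis} upgrades this to $\|U_\mu\|^{1/s}_{L^P(\s_n)}\lesssim\|A_{\mu,s}\|$, completing the norm equivalence.

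The main obstacle is the necessity step: one must verify that a single $\sigma$ makes all the exponents in the two applications of Theorem \ref{factor} and in Theorem \ref{dual} admissible. As $\sigma\to\infty$ the required inequalities $p_0\le p_1,p_2$ and $q_0\le q_1,q_2$ reduce, at leading order, to $\frac{1}{P}<\frac{s}{t}$ (equivalently $p-t<p$) and $\frac{1}{Q}<1$ (equivalently $q>s$), both of which hold strictly; a finite $\sigma$ depending only on $p,q,s,t$ thus works, and the remaining care is in matching the powers of $\sigma$ when passing from $\big(\eta^{(2r)}\big)^{1/\sigma}\in T^{P\sigma}_{Q\sigma}(Z)$ back to $\eta^{(2r)}\in T^{P}_{Q}(Z)$, exactly as in the proof of Theorem \ref{case2}.
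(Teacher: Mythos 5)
Your proposal is correct, and its necessity half coincides with the paper's own argument: the same choice of a sufficiently large $\sigma$, the identification of $T^{P\sigma}_{Q\sigma}(Z)$ (with $P=\frac{pt}{s(p-t)}$, $Q=\frac{q}{q-s}$) as the dual of $T^{(P\sigma)'}_{(Q\sigma)'}(Z)$ via Theorem \ref{dual}, the factorization $c_k=d_k\lambda_k^{s/\sigma}$ with $d\in T^{t\sigma/(t\sigma-s)}_{\sigma'}(Z)$ and $\lambda\in T^{p}_{q}(Z)$ from Theorem \ref{factor}, H\"older first with exponents $\sigma',\sigma$ in the sum and then with $\frac{t\sigma}{t\sigma-s},\frac{t\sigma}{s}$ in the $\xi$-integral, Proposition \ref{nece}, and finally Lemma \ref{dis} to pass from $\eta^{(2r)}\in T^{P}_{Q}(Z)$ back to $U_\mu\in L^{P}(\s_n)$; in fact you are more explicit than the paper in checking the admissibility hypotheses of Theorems \ref{dual} and \ref{factor}, which the paper handles only by demanding $t\sigma>s$, and your observation that the constraints degenerate to the strict inequalities $\frac{1}{P}<\frac{s}{t}$ and $\frac{1}{Q}<1$ as $\sigma\to\infty$ is the right justification. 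Where you genuinely diverge is the sufficiency. The paper argues continuously: starting from \eqref{ele2}, it applies H\"older on $\widetilde{\Gamma}(\xi)$ with exponents $\frac{q}{s}$ and $\frac{q}{q-s}$, and then H\"older on $\s_n$ with exponents $\frac{p}{t}$ and $\frac{p}{p-t}$, so that $U_\mu$ appears directly and no discretization of the hypothesis is required. You instead discretize via \eqref{ele1} and invoke the product direction of Theorem \ref{factor} in the form $T^{t/s}_{1}(Z)=T^{p/s}_{q/s}(Z)\cdot T^{P}_{Q}(Z)$ --- which is the same pair of H\"older inequalities performed at the lattice level, exactly as in the sufficiency part of Theorem \ref{case2} --- at the cost of an extra converse discretization estimate $\|\eta^{(r/2)}\|_{T^{P}_{Q}(Z)}\lesssim\|U_\mu\|_{L^{P}(\s_n)}$. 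That estimate is correct (for $z\in D(a_k,r/2)$ one has $D(a_k,r/2)\subset D(z,r)$, hence $\widehat{\mu}_{r/2}(a_k)\lesssim\widehat{\mu}_r(z)$, and averaging over $D(a_k,r/2)$ with respect to $dv_\alpha$, finite overlap of the lattice, and aperture invariance finish it), but it is neither present in nor needed by the paper. The trade-off: the paper's sufficiency is shorter and more elementary, while yours makes Theorem \ref{case3} structurally parallel to Theorem \ref{case2}, routing both implications through the single discrete condition $\eta^{(\delta)}\in T^{P}_{Q}(Z)$; both routes produce the same two-sided norm estimate $\|A_{\mu,s}\|_{\HT_{q,\alpha}^p\to L^t(\s_n)}\asymp\|U_{\mu}\|^{1/s}_{L^{P}(\s_n)}$.
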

\begin{proof}
Suppose first that $A_{\mu,s}:\HT_{q,\alpha}^p\to L^t(\s_n)$ is bounded. We are going to establish 
$\eta^{(2r)}\in T^{\frac{pt}{s(p-t)}}_{\frac{q}{q-s}}(Z)$, which is equivalent to $\left(\eta^{(2r)}\right)^{1/\sigma}\in T^{\frac{pt\sigma}{s(p-t)}}_{\frac{q\sigma}{q-s}}(Z)$, where $\sigma>1$ is large enough such that $t\sigma>s$. By Theorems \ref{dual} and \ref{factor}, we have
$$\left(T^{\frac{pt\sigma}{s(p-t)}}_{\frac{q\sigma}{q-s}}(Z)\right)^*
=T^{\left(\frac{pt\sigma}{s(p-t)}\right)'}_{\left(\frac{q\sigma}{q-s}\right)'}(Z)
=T^{\frac{t\sigma}{t\sigma-s}}_{\sigma'}(Z)\cdot T^{p\sigma/s}_{q\sigma/s}(Z).$$
Take any $c=\{c_k\}\in T^{\left(\frac{pt\sigma}{s(p-t)}\right)'}_{\left(\frac{q\sigma}{q-s}\right)'}(Z)$, and factor it as $c_k=d_k\lambda_k^{s/\sigma}$, with $d=\{d_k\}\in T^{\frac{t\sigma}{t\sigma-s}}_{\sigma'}(Z)$ and $\lambda=\{\lambda_k\}\in T_q^p(Z)$. Applying \eqref{EqG}, H\"{o}lder's inequality and Proposition \ref{nece} as in the proof of Theorem \ref{case2}, we obtain that
\begin{align*}
\left|\left\langle c,\left(\eta^{(2r)}\right)^{1/\sigma}\right\rangle_{T^2_2(Z)}\right|
&\lesssim\|d\|_{T^{\frac{t\sigma}{t\sigma-s}}_{\sigma'}(Z)}\cdot
    \left(\int_{\s_n}\left(\sum_{a_k\in\Gamma(\xi)}|\lambda_k|^{s}\eta_k^{(2r)}\right)^{t/s}d\sigma(\xi)\right)^{\frac{s}{t\sigma}}\\
&\lesssim\|d\|_{T^{\frac{t\sigma}{t\sigma-s}}_{\sigma'}(Z)}\cdot\|A_{\mu,s}\|^{s/\sigma}_{\HT_{q,\alpha}^p\to L^t(\s_n)}\cdot
    \|\lambda\|^{s/\sigma}_{T_q^p(Z)}\\
&\lesssim\|A_{\mu,s}\|^{s/\sigma}_{\HT_{q,\alpha}^p\to L^t(\s_n)}
    \cdot\|c\|_{T^{\left(\frac{pt\sigma}{s(p-t)}\right)'}_{\left(\frac{q\sigma}{q-s}\right)'}(Z)}.
\end{align*}
By duality we get that $\left(\eta^{(2r)}\right)^{1/\sigma}\in T^{\frac{pt\sigma}{s(p-t)}}_{\frac{q\sigma}{q-s}}(Z)$ with
$$\left\|\left(\eta^{(2r)}\right)^{1/\sigma}\right\|_{T^{\frac{pt\sigma}{s(p-t)}}_{\frac{q\sigma}{q-s}}(Z)}
\lesssim\|A_{\mu,s}\|^{s/\sigma}_{\HT_{q,\alpha}^p\to L^t(\s_n)},$$
which, in conjunction with Lemma \ref{dis}, implies that $U_{\mu}\in L^{\frac{pt}{s(p-t)}}(\s_n)$, and
$$\|U_{\mu}\|_{L^{\frac{pt}{s(p-t)}}(\s_n)}\lesssim\left\|\eta^{(2r)}\right\|_{T^{\frac{pt}{s(p-t)}}_{\frac{q}{q-s}}(Z)}
=\left\|\left(\eta^{(2r)}\right)^{1/\sigma}\right\|^{\sigma}_{T^{\frac{pt\sigma}{s(p-t)}}_{\frac{q\sigma}{q-s}}(Z)}
\lesssim\|A_{\mu,s}\|^{s}_{\HT_{q,\alpha}^p\to L^t(\s_n)}.$$

Conversely, suppose that $U_{\mu}\in L^{\frac{pt}{s(p-t)}}(\s_n)$. Note that $p/t>1$ and $q/s>1$. For any $f\in\HT_{q,\alpha}^p$, the estimate \eqref{ele2} together with H\"{o}lder's inequality yields that
\begin{align*}
\int_{\s_n}\left(A_{\mu,s}f(\xi)\right)^td\sigma(\xi)
&\lesssim\int_{\s_n}\left(\int_{\widetilde{\Gamma}(\xi)}|f|^s\widehat{\mu}_{r}dv_{\alpha}\right)^{t/s}d\sigma(\xi)\\
&\leq\int_{\s_n}\left(\int_{\widetilde{\Gamma}(\xi)}|f|^qdv_{\alpha}\right)^{t/q}
    \left(\int_{\widetilde{\Gamma}(\xi)}\widehat{\mu}_r^{\frac{q}{q-s}}dv_{\alpha}\right)^{\frac{t(q-s)}{qs}}d\sigma(\xi)\\
&\leq\left(\int_{\s_n}\left(\int_{\widetilde{\Gamma}(\xi)}|f|^qdv_{\alpha}\right)^{p/q}d\sigma(\xi)\right)^{t/p}\\
&\qquad\qquad    \cdot\left(\int_{\s_n}\left(\int_{\widetilde{\Gamma}(\xi)}
    \widehat{\mu}_r^{\frac{q}{q-s}}dv_{\alpha}\right)^{\frac{pt(q-s)}{qs(p-t)}}d\sigma(\xi)\right)^{\frac{p-t}{p}}\\
&\asymp\|U_{\mu}\|^{t/s}_{L^{\frac{pt}{s(p-t)}}(\s_n)}\cdot\|f\|^t_{\HT_{q,\alpha}^p}.
\end{align*}
Therefore, $A_{\mu,s}:\HT_{q,\alpha}^p\to L^t(\s_n)$ is bounded, and
$$\|A_{\mu,s}\|_{\HT_{q,\alpha}^p\to L^t(\s_n)}\lesssim\|U_{\mu}\|^{1/s}_{L^{\frac{pt}{s(p-t)}}(\s_n)},$$
which finishes the proof.
\end{proof}

Finally, we deal with the last case of the boundedness of $A_{\mu,s}:\HT_{q,\alpha}^p\to L^t(\s_n)$.

\begin{theorem}\label{case4}
Let $\alpha>-n-1$, $p>t$ and $q\leq s$. Then $A_{\mu,s}:\HT_{q,\alpha}^p\to L^t(\s_n)$ is bounded if and only if
$$V_{\mu}(z):=\sup_{z\in\Gamma(\xi)}\widehat{\mu}_r(z)(1-|z|^2)^{\frac{(q-s)(n+1+\alpha)}{q}}$$
belongs to $L^{\frac{pt}{s(p-t)}}(\s_n)$. Moreover,
$$\|A_{\mu,s}\|_{\HT_{q,\alpha}^p\to L^t(\s_n)}\asymp\|V_{\mu}\|^{1/s}_{L^{\frac{pt}{s(p-t)}}(\s_n)}.$$
\end{theorem}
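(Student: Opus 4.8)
The plan is to mirror the proof of Theorem \ref{case3}, replacing the finite secondary exponent $\frac{q}{q-s}$ by $\infty$ (its natural limit as $q\downarrow s$), so that the governing sequence space becomes $T^{\frac{pt}{s(p-t)}}_\infty(Z)$. Throughout write $m=\frac{pt}{s(p-t)}$, which is positive since $p>t$.

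For the sufficiency, suppose $V_\mu\in L^{m}(\s_n)$ and fix $f\in\HT^p_{q,\alpha}$. Starting from \eqref{ele2} I would factor the integrand as $\widehat\mu_r(w)\,dv_\alpha(w)=\big[\widehat\mu_r(w)(1-|w|^2)^{\frac{(q-s)(n+1+\alpha)}{q}}\big]\,dv_{\alpha+\frac{(s-q)(n+1+\alpha)}{q}}(w)$, pull the bracketed factor out of the cone integral by its supremum over $\widetilde\Gamma(\xi)$ (an enlarged-aperture version of $V_\mu$), and then, using $q\le s$, invoke the sub-mean value computation of \eqref{Hol} to bound $\int_{\widetilde\Gamma(\xi)}|f|^s\,dv_{\alpha+\frac{(s-q)(n+1+\alpha)}{q}}$ by $\big(\int_{\widetilde\Gamma(\xi)}|f|^q\,dv_\alpha\big)^{s/q}$. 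Raising to the power $t/s$, integrating over $\s_n$, and applying Hölder's inequality with exponents $p/t$ and $\frac{p}{p-t}$ separates the two factors: the supremum factor contributes $\|V_\mu\|^{t/s}_{L^{m}(\s_n)}$ (after identifying the enlarged-aperture version with $V_\mu$ by aperture independence, or Lemma \ref{dis}), and the remaining factor is $\asymp\|f\|^t_{\HT^p_{q,\alpha}}$. This is routine.

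For the necessity, I would first use the second estimate of Lemma \ref{dis} to reduce the claim $V_\mu\in L^{m}(\s_n)$ to the discrete membership $\eta^{(2r)}\in T^{m}_\infty(Z)$. Choosing $\sigma>1$ large enough that $t\sigma>s$ and $m\sigma>1$ (so all exponents below fall in the admissible ranges of Theorems \ref{dual} and \ref{factor}), it suffices to show $(\eta^{(2r)})^{1/\sigma}\in T^{m\sigma}_\infty(Z)$. When $q<s$ I would run the duality-and-factorization argument of Theorem \ref{case3}, now using the \emph{second} part of Theorem \ref{dual}: one has $T^{m\sigma}_\infty(Z)=\big(T^{(m\sigma)'}_{\tilde q}(Z)\big)^*$ for the exponent $\tilde q=\frac{q\sigma}{q\sigma-q+s}$, which satisfies $0<\tilde q<1$ precisely because $q<s$; Theorem \ref{factor} then factors the predual as $T^{(m\sigma)'}_{\tilde q}(Z)=T^{\frac{t\sigma}{t\sigma-s}}_{\sigma'}(Z)\cdot T^{p\sigma/s}_{q\sigma/s}(Z)$, and pairing $(\eta^{(2r)})^{1/\sigma}$ against a factored $c_k=d_k\lambda_k^{s/\sigma}$, together with \eqref{EqG}, discrete Hölder (exponents $\sigma,\sigma'$) and Proposition \ref{nece}, bounds the pairing by $\|A_{\mu,s}\|^{s/\sigma}_{\HT^p_{q,\alpha}\to L^t(\s_n)}\|c\|$. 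Duality then gives the desired membership with the correct norm control.

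The main obstacle is the endpoint $q=s$: there the forced factorization exponent $\tilde q=\frac{q\sigma}{q\sigma-q+s}$ equals $1$, which lies outside the range $0<\tilde q<1$ of Theorem \ref{dual}, so the pairing argument above breaks down. I would treat this case by a multiplier reformulation instead: setting $\beta_k=|\lambda_k|^q$, Proposition \ref{nece} with $s=q$ says exactly that $\eta^{(2r)}$ is a bounded nonnegative pointwise multiplier $T^{p/q}_1(Z)\to T^{t/q}_1(Z)$, and since $p>t$, Theorems \ref{dual} and \ref{factor} identify the corresponding multiplier space (via $T^{t/q}_1(Z)=T^{p/q}_1(Z)\cdot T^{\frac{pt}{q(p-t)}}_\infty(Z)$) with $T^{\frac{pt}{s(p-t)}}_\infty(Z)$, yielding $\eta^{(2r)}$ in the required space. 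In both sub-cases Lemma \ref{dis} converts $\eta^{(2r)}\in T^{m}_\infty(Z)$ back to $V_\mu\in L^{m}(\s_n)$, and tracking the constants through both directions produces the stated equivalence $\|A_{\mu,s}\|_{\HT^p_{q,\alpha}\to L^t(\s_n)}\asymp\|V_\mu\|^{1/s}_{L^{m}(\s_n)}$.
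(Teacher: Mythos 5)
Your sufficiency argument and your necessity argument for $q<s$ coincide with the paper's proof: your exponent $\tilde q=\frac{q\sigma}{q\sigma-q+s}$ is exactly the paper's $\rho$ defined by $\frac{1}{\rho}=\frac{1}{\sigma'}+\frac{s}{q\sigma}$, and the pairing, factorization and Proposition~\ref{nece} steps are the same. The problem is your treatment of the endpoint $q=s$, in two respects.

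First, the ``main obstacle'' you identify there is illusory. When $q=s$ one gets $\tilde q=\rho=1$, which indeed leaves the range $0<q<1$ of the \emph{second} half of Theorem~\ref{dual}, but it is covered by the \emph{first} half: writing $P=\bigl(\frac{pt\sigma}{s(p-t)}\bigr)'$, the choice $t\sigma>s$ forces $\frac{pt\sigma}{s(p-t)}>\frac{t\sigma}{s}>1$, hence $1<P<\infty$ and $P+1>2$, so the dual of $T^{P}_{1}(Z)$ is $T^{P'}_{1'}(Z)=T^{\frac{pt\sigma}{s(p-t)}}_{\infty}(Z)$ under the same pairing. Thus the single duality-and-factorization argument runs verbatim for all $q\le s$, which is exactly how the paper handles it (the paper only records $\rho\le1$ and invokes Theorem~\ref{dual} once).

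Second, your substitute argument at $q=s$ has a genuine gap. Proposition~\ref{nece} does show that $\eta^{(2r)}$ is a bounded pointwise multiplier from $T^{p/q}_1(Z)$ to $T^{t/q}_1(Z)$, and Theorem~\ref{factor} does give the factorization $T^{t/q}_1(Z)=T^{p/q}_1(Z)\cdot T^{m}_{\infty}(Z)$ with $m=\frac{pt}{q(p-t)}$; but factorization only yields the inclusion $T^{m}_{\infty}(Z)\subseteq\mathrm{Mult}\bigl(T^{p/q}_1(Z)\to T^{t/q}_1(Z)\bigr)$, i.e.\ the direction you do not need. The converse inclusion, which is what you assert ``yields $\eta^{(2r)}$ in the required space,'' does not follow from Theorems~\ref{dual} and~\ref{factor} as stated: extracting membership from the multiplier property would require pairing against a predual of $T^{t/q}_1(Z)$, or realizing $T^{m}_{\infty}(Z)$ as a dual space, and Theorem~\ref{dual} provides neither when $t/q\le1$ or $m\le1$ --- parameter ranges that genuinely occur here (for instance $p=2$, $t=1$, $q=s=3$ gives $t/q=\frac13$ and $m=\frac23$). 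Note also that in this branch you abandoned the $\sigma$-power normalization that elsewhere protects you from sub-unital exponents, so these small exponents are not avoided. The repair is simply to delete the detour and run the $\rho=1$ case through the first half of Theorem~\ref{dual} as indicated above; with that change your proof becomes the paper's proof.
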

\begin{proof}
We first discuss the sufficiency. For any $f\in\HT^{p}_{q,\alpha}$, the estimate \eqref{ele2} together with \eqref{Hol} gives that
\begin{align*}
\left(A_{\mu,s}f(\xi)\right)^s&\lesssim\int_{\widetilde{\Gamma}(\xi)}|f(z)|^s\widehat{\mu}_{r}(z)dv_{\alpha}(z)\\
&\leq\left(\sup_{z\in\widetilde{\Gamma}(\xi)}\widehat{\mu}_{r}(z)(1-|z|^2)^{\frac{(q-s)(n+1+\alpha)}{q}}\right)
    \cdot\int_{\widetilde{\Gamma}(\xi)}|f(z)|^sdv_{\alpha+\frac{(s-q)(n+1+\alpha)}{q}}(z)\\
&\lesssim\left(\sup_{z\in\widetilde{\Gamma}(\xi)}\widehat{\mu}_{r}(z)(1-|z|^2)^{\frac{(q-s)(n+1+\alpha)}{q}}\right)
    \cdot\left(\int_{\widetilde{\Gamma}(\xi)}|f|^qdv_{\alpha}\right)^{s/q}.
\end{align*}
Consequently, noting that $p/t>1$, we obtain that
\begin{align*}
&\int_{\s_n}\left(A_{\mu,s}f(\xi)\right)^td\sigma(\xi)\\
&\ \lesssim\int_{\s_n}\left(\sup_{z\in\widetilde{\Gamma}(\xi)}\widehat{\mu}_{r}(z)(1-|z|^2)^{\frac{(q-s)(n+1+\alpha)}{q}}\right)^{t/s}
    \cdot\left(\int_{\widetilde{\Gamma}(\xi)}|f|^qdv_{\alpha}\right)^{t/q}d\sigma(\xi)\\
&\ \leq\left(\int_{\s_n}\left(\sup_{z\in\widetilde{\Gamma}(\xi)}\widehat{\mu}_{r}(z)
    (1-|z|^2)^{\frac{(q-s)(n+1+\alpha)}{q}}\right)^{\frac{pt}{s(p-t)}}d\sigma(\xi)\right)^{\frac{p-t}{p}}\\
&\qquad\qquad    \cdot\left(\int_{\s_n}\left(\int_{\widetilde{\Gamma}(\xi)}|f|^qdv_{\alpha}\right)^{p/q}d\sigma(\xi)\right)^{t/p}\\
&\ \asymp\|V_{\mu}\|^{t/s}_{L^{\frac{pt}{s(p-t)}}(\s_n)}\cdot\|f\|^{t}_{\HT^{p}_{q,\alpha}},
\end{align*}
which implies the boundedness of $A_{\mu,s}:\HT_{q,\alpha}^p\to L^t(\s_n)$ with 
$$\|A_{\mu,s}\|_{\HT_{q,\alpha}^p\to L^t(\s_n)}\lesssim\|V_{\mu}\|^{1/s}_{L^{\frac{pt}{s(p-t)}}(\s_n)}.$$

We next consider the necessity. By Lemma \ref{dis}, it is enough to show $\eta^{(2r)}\in T^{\frac{pt}{s(p-t)}}_{\infty}(Z)$, and in this case, we will have
$$\|V_{\mu}\|_{L^{\frac{pt}{s(p-t)}}(\s_n)}\lesssim\|\eta^{(2r)}\|_{T^{\frac{pt}{s(p-t)}}_{\infty}(Z)}.$$
Choose $\sigma>1$ such that $t\sigma>s$. We will apply the dual and factorization of sequence tent spaces as before to show $\left(\eta^{(2r)}\right)^{1/\sigma}\in T^{\frac{pt\sigma}{s(p-t)}}_{\infty}(Z)$. Let $\rho>0$ satisfy
$$\frac{1}{\rho}=\frac{1}{\sigma'}+\frac{s}{q\sigma}.$$
Then $\rho\leq1$ since $q\leq s$. Consequently, by Theorems \ref{dual} and \ref{factor},
$$T^{\frac{pt\sigma}{s(p-t)}}_{\infty}(Z)=\left(T^{\left(\frac{pt\sigma}{s(p-t)}\right)'}_{\rho}(Z)\right)^*
=\left(T^{\frac{t\sigma}{t\sigma-s}}_{\sigma'}(Z)\cdot T^{p\sigma/s}_{q\sigma/s}(Z)\right)^*.$$
Take any $c=\{c_k\}\in T^{\left(\frac{pt\sigma}{s(p-t)}\right)'}_{\rho}(Z)$ and factor it as $c_k=d_k\lambda^{s/\sigma}_k$, with $d=\{d_k\}\in T^{\frac{t\sigma}{t\sigma-s}}_{\sigma'}(Z)$ and $\lambda=\{\lambda_k\}\in T_q^p(Z)$. Then, as in the proof of Theorem \ref{case3}, we have
\begin{align*}
\left|\left\langle c,\left(\eta^{(2r)}\right)^{1/\sigma}\right\rangle_{T^2_2(Z)}\right|
&\lesssim\|d\|_{T^{\frac{t\sigma}{t\sigma-s}}_{\sigma'}(Z)}
    \cdot\|A_{\mu,s}\|^{s/\sigma}_{\HT_{q,\alpha}^p\to L^t(\s_n)}\cdot\|\lambda\|^{s/\sigma}_{T_q^p(Z)}\\
&\lesssim\|A_{\mu,s}\|^{s/\sigma}_{\HT_{q,\alpha}^p\to L^t(\s_n)}
	\cdot\|c\|_{T^{\left(\frac{pt\sigma}{s(p-t)}\right)'}_{\rho}(Z)}.
\end{align*}
By duality we obtain that $\left(\eta^{(2r)}\right)^{1/\sigma}\in T^{\frac{pt\sigma}{s(p-t)}}_{\infty}(Z)$ with
$$\left\|\left(\eta^{(2r)}\right)^{1/\sigma}\right\|_{T^{\frac{pt\sigma}{s(p-t)}}_{\infty}(Z)}\lesssim
\|A_{\mu,s}\|^{s/\sigma}_{\HT_{q,\alpha}^p\to L^t(\s_n)}.$$
Therefore,
$$\|V_{\mu}\|_{L^{\frac{pt}{s(p-t)}}(\s_n)}\lesssim\|\eta^{(2r)}\|_{T^{\frac{pt}{s(p-t)}}_{\infty}(Z)}
=\left\|\left(\eta^{(2r)}\right)^{1/\sigma}\right\|^{\sigma}_{T^{\frac{pt\sigma}{s(p-t)}}_{\infty}(Z)}\lesssim
\|A_{\mu,s}\|^{s}_{\HT_{q,\alpha}^p\to L^t(\s_n)}$$
and the proof is complete.
\end{proof}

By putting the equation \eqref{norm=}, Theorems \ref{case1}, \ref{case2}, \ref{case3} and \ref{case4} together, we obtain the whole proof of Theorem \ref{main}.

We end this section by the proof of Corollary \ref{incl}.

\begin{proof}[Proof of Corollary \ref{incl}]
By the closed graph theorem, the inclusion $\HT^p_{q,\alpha}\subset\HT^t_{s,\beta}$ is bounded whenever it is valid. Consequently, the inclusion $\HT^p_{q,\alpha}\subset\HT^t_{s,\beta}$ holds if and only if $I_d:\HT^p_{q,\alpha}\to T^t_{s}(v_{\beta+n})$ is bounded. Note that for fixed $r\in(0,1)$,
$$\widehat{(v_{\beta+n})_r}(z)\asymp(1-|z|^2)^{\beta-\alpha},\quad z\in\B_n.$$
The desired result then follows from Theorem \ref{main} easily. For example, in the case of $p>t$ and $q>s$, the inclusion $\HT^p_{q,\alpha}\subset\HT^t_{s,\beta}$ holds if and only if
$$\int_{\s_n}\left(\int_{\Gamma(\xi)}(1-|z|^2)^{\frac{q(\beta-\alpha)}{q-s}+\alpha}dv(z)\right)^{\frac{pt(q-s)}{qs(p-t)}}d\sigma(\xi)
<\infty,$$
which is equivalent to
$$\frac{q(\beta-\alpha)}{q-s}+\alpha>-n-1,$$
that is, $\frac{n+1+\alpha}{q}<\frac{n+1+\beta}{s}$.
\end{proof}

\section{Compact embeddings}\label{compact}

In this section, we characterize the compactness of $I_d:\HT^p_{q,\alpha}\to T^t_s(\mu)$. Note that by Theorem \ref{main}, if $I_d:\HT_{q,\alpha}^p\to T^t_s(\mu)$ is bounded, then $\mu$ is finite on compact subsets of $\B_n$. Hence we can assume that $\mu$ is finite on compact subsets of $\B_n$ when we deal with compactness. The main result is as follows.

\begin{theorem}\label{cpt}
Let $\alpha>-n-1$, $0<p,q,s,t<\infty$, and let $\mu$ be a positive Borel measure on $\B_n$, finite on compact subsets of $\B_n$. Fix $r\in(0,1)$. Then the following statements hold.
\begin{enumerate}
\item [(1)] If $p<t$, or $p=t$ and $q\leq s$, then $I_d:\HT_{q,\alpha}^p\to T^t_s(\mu)$ is compact if and only if
	$$\lim_{|z|\to1^-}\widehat{\mu}_r(z)(1-|z|^2)^{\frac{(q-s)(n+1+\alpha)}{q}+ns\left(\frac{1}{t}-\frac{1}{p}\right)}=0.$$
\item [(2)] If $p=t$ and $q>s$, then $I_d:\HT_{q,\alpha}^p\to T^t_s(\mu)$ is compact if and only if
	$$\widehat{\mu}_r(z)^{\frac{q}{q-s}}dv_{\alpha+n}(z)$$
	is a vanishing Carleson measure.
\item [(3)] If $p>t$ and $q>s$, then $I_d:\HT_{q,\alpha}^p\to T^t_s(\mu)$ is compact if and only if
	$$\xi\mapsto\left(\int_{\Gamma(\xi)}\widehat{\mu}_r(z)^{\frac{q}{q-s}}dv_{\alpha}(z)\right)^{\frac{q-s}{q}}$$
	belongs to $L^{\frac{pt}{s(p-t)}}(\s_n)$.
\item [(4)] If $p>t$ and $q\leq s$, then $I_d:\HT_{q,\alpha}^p\to T^t_s(\mu)$ is compact if and only if
	$$\lim_{\varrho\to1^-}\int_{\s_n}\left(\sup_{z\in\Gamma(\xi)\setminus \varrho\B_n}
	\widehat{\mu}_r(z)(1-|z|^2)^{\frac{(q-s)(n+1+\alpha)}{q}}\right)^{\frac{pt}{s(p-t)}}d\sigma(\xi)=0.$$
\end{enumerate}
\end{theorem}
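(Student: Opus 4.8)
The plan is to reduce the compactness of $I_d$ to a statement about the operator norms of truncated embeddings and then read off each of the four cases directly from Theorem \ref{main}. The underlying tool is the standard sequence criterion: since point evaluations are bounded on $\HT^p_{q,\alpha}$ and norm-bounded subsets are normal families (Montel), the embedding $I_d:\HT^p_{q,\alpha}\to T^t_s(\mu)$ is compact if and only if $\|f_j\|_{T^t_s(\mu)}\to0$ for every sequence $\{f_j\}$ that is bounded in $\HT^p_{q,\alpha}$ and converges to $0$ uniformly on compact subsets of $\B_n$.

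The heart of the argument is a reduction lemma: for $\mu$ finite on compact subsets and $I_d$ bounded,
$$I_d:\HT^p_{q,\alpha}\to T^t_s(\mu)\ \text{is compact}\iff \lim_{\varrho\to1^-}\big\|I_d:\HT^p_{q,\alpha}\to T^t_s(\chi_{(\varrho\B_n)^c}\mu)\big\|=0.$$
For the implication $\Leftarrow$ I would split $\mu=\chi_{\varrho\B_n}\mu+\chi_{(\varrho\B_n)^c}\mu$; since the area integral is additive in $\mu$ and $(a+b)^{t/s}\lesssim a^{t/s}+b^{t/s}$, one gets $\|f\|^t_{T^t_s(\mu)}\lesssim\|f\|^t_{T^t_s(\chi_{\varrho\B_n}\mu)}+\|f\|^t_{T^t_s(\chi_{(\varrho\B_n)^c}\mu)}$. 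For a bounded, locally uniformly null sequence the first term tends to $0$ (because $\mu(\varrho\B_n)<\infty$ and $\sup_{\overline{\varrho\B_n}}|f_j|\to0$), while the second is $\lesssim\|I_d^{(\chi_{(\varrho\B_n)^c}\mu)}\|^t$; letting $j\to\infty$ and then $\varrho\to1$ finishes it. For $\Rightarrow$ I would argue by contradiction through the sequence criterion: if the tail norms stayed $\geq\epsilon$, pick unit vectors $f_j$ with $\|f_j\|_{T^t_s(\chi_{(\varrho_j\B_n)^c}\mu)}\geq\epsilon$, pass by normality to a locally uniformly convergent subsequence $f_j\to f\in\HT^p_{q,\alpha}$, observe that compactness of $I_d^{(\mu)}$ forces $I_df_j\to I_df$ in $T^t_s(\mu)$ (the $T^t_s(\mu)$-limit agrees $\mu$-a.e.\ with the pointwise limit $f$), and then dominate $\|f_j\|_{T^t_s(\chi_{(\varrho_j\B_n)^c}\mu)}$ by a multiple of $\|f_j-f\|_{T^t_s(\mu)}$ plus a boundary tail of $\|f\|_{T^t_s(\mu)}$, both of which vanish, contradicting $\epsilon$.

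Finally I would translate the reduction lemma into the four stated conditions by applying the norm estimates of Theorems \ref{case1}--\ref{case4} to the tail measures $\mu_\varrho:=\chi_{(\varrho\B_n)^c}\mu$. The book-keeping device is that $\widehat{(\mu_\varrho)}_r(z)=0$ unless $D(z,r)$ meets $(\varrho\B_n)^c$, so $\widehat{(\mu_\varrho)}_r\leq\widehat{\mu}_r\,\chi_{(\varrho'\B_n)^c}$ with $\varrho'=\varrho'(\varrho,r)\to1$ as $\varrho\to1$. With this, in case (1) $\|I_d^{(\mu_\varrho)}\|\asymp(\sup_{\B_n} G_{\mu_\varrho})^{1/s}\to0$ reduces to the stated boundary decay of $G_\mu$; in case (2) $\|I_d^{(\mu_\varrho)}\|\asymp\|\nu_{\mu_\varrho}\|_{CM}^{(q-s)/(qs)}\to0$ is equivalent, via Lemma \ref{rVCM}, to $\nu_\mu$ being a vanishing Carleson measure; in case (4) the tail norm is $\asymp\|V_{\mu_\varrho}\|_{L^{pt/s(p-t)}}^{1/s}$, which reproduces the stated limit; and in case (3), since the exponent $\tfrac{pt}{s(p-t)}$ is finite, dominated convergence gives $\|U_{\mu_\varrho}\|_{L^{pt/s(p-t)}}\to0$ automatically whenever $U_\mu\in L^{pt/s(p-t)}$, so compactness there coincides with boundedness. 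The main obstacle I anticipate is the $\Rightarrow$ direction of the reduction lemma, namely correctly identifying the $T^t_s(\mu)$-limit with the locally uniform limit $f$ and controlling the quasi-triangle constants in the quasi-Banach range $\min\{p,q,s,t\}<1$, together with the two-radii fuzz estimate needed to pass cleanly between $\widehat{(\mu_\varrho)}_r$ and the boundary tail of $\widehat{\mu}_r$.
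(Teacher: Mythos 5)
Your proposal is correct, but it is organized quite differently from the paper's proof, and the difference is worth recording. The paper does not go through a reduction to tail operator norms: instead, for the necessity directions it proves a little-oh version of its testing proposition (Proposition \ref{cpt-nece}), obtained from compactness via a finite $\epsilon$-net for $S^{\theta}_Z(B_{T^p_q(Z)})$ and dominated convergence, and then reruns the duality--factorization machinery of Theorems \ref{case2}--\ref{case4} on the truncated sequences $\{\eta^{(2r)}_k\chi_{(\varrho_0\B_n)^c}(a_k)\}$, together with the discretization Lemmas \ref{disCM} and \ref{dis}; the sufficiency directions are proved by hand, splitting the lattice sum in \eqref{ele1} into $a_k\in\varrho'\B_n$ and its complement. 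Your single reduction lemma, $I_d$ compact $\iff$ $\lim_{\varrho\to1^-}\|I_d:\HT^p_{q,\alpha}\to T^t_s(\chi_{(\varrho\B_n)^c}\mu)\|=0$, lets you instead black-box the quantitative norm equivalences $\|A_{\mu,s}\|\asymp\sup G_\mu^{1/s}$, $\|\nu_\mu\|_{CM}^{(q-s)/(qs)}$, $\|U_\mu\|^{1/s}$, $\|V_\mu\|^{1/s}$ applied to the tail measures $\mu_\varrho$, so the duality/factorization arguments are used only once (in the boundedness theorems) rather than redone in little-oh form; this is legitimate because the implicit constants in Theorems \ref{case1}--\ref{case4} depend only on the parameters and not on $\mu$, and your two-radii comparison $\widehat{(\mu_\varrho)}_r\le\widehat{\mu}_r\chi_{(\varrho'\B_n)^c}$ and $\widehat{\mu}_r\chi_{(\varrho''\B_n)^c}\le\widehat{(\mu_\varrho)}_r$ (valid since $1-|w|\asymp1-|z|$ on $D(z,r)$, with $\varrho',\varrho''\to1$) transfers each tail condition to the stated boundary condition, with Lemma \ref{rVCM} closing case (2) exactly as in the paper and dominated convergence giving ``compact $=$ bounded'' in case (3), again as in the paper. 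Two small remarks: in the $\Rightarrow$ direction of your reduction lemma you can sidestep the limit-identification issue you flag by applying the Montel sequence criterion (which the paper itself invokes without proof) directly to $g_j=f_j-f$, where $f\in\HT^p_{q,\alpha}$ by Fatou's lemma applied first on $\Gamma(\xi)$ and then on $\s_n$, so that $\|f_j-f\|_{T^t_s(\mu)}\to0$ and $\|f\|_{T^t_s(\mu_{\varrho_j})}\to0$ by dominated convergence yield the contradiction; and the quasi-triangle constants you worry about are harmless since every splitting you use has a bounded number of terms. So the proposal stands as a correct, somewhat more economical alternative; what the paper's route buys in exchange is explicit truncated testing estimates (Proposition \ref{cpt-nece}) that localize exactly which lattice points matter, which is what makes its direct sufficiency arguments in cases (2) and (4) self-contained.
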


Before proceeding, we establish the following little-oh version of Proposition \ref{nece}. Recall that for the $\frac{r}{2}$-lattice $Z=\{a_k\}$ and $\delta>0$, the sequence $\eta^{(\delta)}=\{\eta^{(\delta)}_k\}$ is defined by
$$\eta^{(\delta)}_k=\widehat{\mu}_{\delta}(a_k)(1-|a_k|^2)^{\frac{(q-s)(n+1+\alpha)}{q}}.$$

\begin{proposition}\label{cpt-nece}
Let $\alpha>-n-1$ and $0<p,q,s,t<\infty$. Suppose that $I_d:\HT_{q,\alpha}^p\to T^t_s(\mu)$ is compact. Then for any $\epsilon>0$, there exists $\varrho_0\in(0,1)$ such that for any $\lambda=\{\lambda_k\}\in T^p_q(Z)$,
$$\int_{\s_n}\left(\sum_{a_k\in\Gamma(\xi)\setminus\varrho_0\B_n}|\lambda_k|^s\eta^{(2r)}_k\right)^{t/s}d\sigma(\xi)\lesssim
\epsilon^t\|\lambda\|^t_{T_q^p(Z)}.$$
\end{proposition}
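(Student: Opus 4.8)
The plan is to deduce the little-oh estimate from the big-Oh estimate of Proposition \ref{nece}, applied to a suitably truncated measure, rather than redoing the randomized test-function computation. For $\varrho\in(0,1)$ write $\mu_{\varrho}:=\chi_{(\varrho\B_n)^c}\mu$, so that
$$A_{\mu_{\varrho},s}f(\xi)^s=\int_{\Gamma(\xi)\setminus\varrho\B_n}|f(z)|^s\frac{d\mu(z)}{(1-|z|^2)^n}$$
is the boundary tail of the area function. Since $\mu_{\varrho}\le\mu$, the operator $A_{\mu_{\varrho},s}$ is bounded whenever $A_{\mu,s}$ is, and $I_d$ compact forces $A_{\mu,s}$ bounded, so Proposition \ref{nece} will be applicable to each $\mu_{\varrho}$.

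The first and main step is to show that compactness of $I_d$ forces the tail norm to vanish: for every $\epsilon>0$ there is $\varrho_1\in(0,1)$ with $\|A_{\mu_{\varrho_1},s}\|_{\HT_{q,\alpha}^p\to L^t(\s_n)}\le\epsilon$. I would prove this by contradiction through a normal families argument. If it failed, there would be $\epsilon_0>0$, radii $\varrho_j\uparrow1$, and functions $f_j$ with $\|f_j\|_{\HT_{q,\alpha}^p}=1$ and $\|A_{\mu_{\varrho_j},s}f_j\|_{L^t(\s_n)}>\epsilon_0$. As bounded subsets of $\HT_{q,\alpha}^p$ are normal families, a subsequence $f_{j_m}$ converges locally uniformly to some $f\in\HT_{q,\alpha}^p$ (with $\|f\|_{\HT_{q,\alpha}^p}\le1$ by lower semicontinuity). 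Compactness of $I_d$ then upgrades this, along a further subsequence, to $T^t_s(\mu)$-convergence: the $T^t_s(\mu)$-limit is identified with $f$ because quasinorm convergence forces $\mu$-a.e.\ convergence along a subsequence, which must agree with the pointwise limit $f$. Thus $\|A_{\mu,s}(f_{j_m}-f)\|_{L^t(\s_n)}\to0$. Splitting $A_{\mu_{\varrho_{j_m}},s}f_{j_m}$ by the quasi-subadditivity of $g\mapsto A_{\nu,s}g$ (with constants depending only on $s,t$) into a piece dominated by $A_{\mu,s}(f_{j_m}-f)$ and the tail $A_{\mu_{\varrho_{j_m}},s}f$, and noting that the latter tends to $0$ in $L^t(\s_n)$ by dominated convergence (dominated by $A_{\mu,s}f\in L^t(\s_n)$, using $\varrho_{j_m}\uparrow1$), yields $\|A_{\mu_{\varrho_{j_m}},s}f_{j_m}\|_{L^t(\s_n)}\to0$, contradicting $>\epsilon_0$. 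This is the step I expect to be the main obstacle, chiefly the passage from local uniform convergence to $T^t_s(\mu)$-convergence and the clean identification of the limit; the subadditivity constants for $s<1$ are a minor bookkeeping matter since they only multiply quantities already tending to zero.

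The second step is routine. Fix $\epsilon>0$ and the corresponding $\varrho_1$ from the first step. Applying Proposition \ref{nece} to $\mu_{\varrho_1}$ gives, for all $\lambda=\{\lambda_k\}\in T^p_q(Z)$,
$$\int_{\s_n}\left(\sum_{a_k\in\Gamma(\xi)}|\lambda_k|^s(\eta_1)^{(2r)}_k\right)^{t/s}d\sigma(\xi)\lesssim\|A_{\mu_{\varrho_1},s}\|^{t}_{\HT_{q,\alpha}^p\to L^t(\s_n)}\|\lambda\|^t_{T_q^p(Z)}\le\epsilon^t\|\lambda\|^t_{T_q^p(Z)},$$
where $(\eta_1)^{(2r)}_k=\widehat{(\mu_{\varrho_1})}_{2r}(a_k)(1-|a_k|^2)^{\frac{(q-s)(n+1+\alpha)}{q}}$.

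Finally I would reconcile $(\eta_1)^{(2r)}$ with $\eta^{(2r)}$ near the boundary. Since $1-|w|^2\asymp1-|a_k|^2$ for $w\in D(a_k,2r)$ with constants depending only on $r$, there is $\varrho_0\in(\varrho_1,1)$ such that $|a_k|\ge\varrho_0$ implies $D(a_k,2r)\subset(\varrho_1\B_n)^c$, whence $\mu_{\varrho_1}(D(a_k,2r))=\mu(D(a_k,2r))$ and therefore $(\eta_1)^{(2r)}_k=\eta^{(2r)}_k$. Restricting the nonnegative sum in the displayed inequality to those $a_k\notin\varrho_0\B_n$ and using the monotonicity of $x\mapsto x^{t/s}$ then gives exactly
$$\int_{\s_n}\left(\sum_{a_k\in\Gamma(\xi)\setminus\varrho_0\B_n}|\lambda_k|^s\eta^{(2r)}_k\right)^{t/s}d\sigma(\xi)\lesssim\epsilon^t\|\lambda\|^t_{T_q^p(Z)},$$
which is the asserted little-oh estimate.
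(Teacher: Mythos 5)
Your proposal is correct, but it takes a genuinely different route from the paper's. The paper exploits compactness through total boundedness: it covers the image of $\{S^{\theta}_Z(\lambda):\lambda\in B_{T^p_q(Z)}\}$ by a finite $\epsilon$-net in $T^t_s(\mu)$, kills the tails of the finitely many net functions by dominated convergence, and then \emph{reruns} the randomization argument of Proposition \ref{nece} (Rademacher functions plus Khinchine--Kahane) with the inner integral restricted to $\Gamma(\xi)\setminus\varrho'_0\B_n$, adjusting $\varrho_0$ exactly as in your last step. You instead prove the stronger intermediate statement $\lim_{\varrho\to1^-}\|A_{\mu_{\varrho},s}\|_{\HT^p_{q,\alpha}\to L^t(\s_n)}=0$ by sequential compactness and normal families, and then quote Proposition \ref{nece} as a black box for the truncated measure. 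This is more modular and yields an essential-norm-type tail bound of independent interest (in the spirit of Lemma \ref{rVCM}), at the price of two verifications you should make explicit: (i) the implicit constant in Proposition \ref{nece} is independent of the measure --- true by inspection, since its proof only invokes Lemma \ref{test}, Khinchine--Kahane and lattice geometry, but crucial when you apply it to $\mu_{\varrho_1}$ with the bound $\epsilon^t$; and (ii) the identification $g=f$ $\mu$-a.e., which you rightly flag: it can be completed by showing that $T^t_s(\mu)$-convergence implies convergence in $\mu$-measure on each $\varrho\B_n$, using $\mu(\{z\in\varrho\B_n:|h_m(z)|>\delta\}\cap\Gamma(\xi))\leq\delta^{-s}\bigl(A_{\mu,s}h_m(\xi)\bigr)^s$, the truncation $\min\bigl(\delta^{-s}(A_{\mu,s}h_m)^s,\mu(\varrho\B_n)\bigr)$ to handle $t<s$, and \eqref{EqG} together with $\sigma(I(z))\gtrsim_{\varrho}1$ on $\varrho\B_n$; finiteness of $\mu$ on compacts is available since $I_d$ is bounded. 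With these two points written out, your argument is complete and every remaining step (monotonicity of $\mu_\varrho$, quasi-subadditivity, dominated convergence for $A_{\mu_{\varrho_{j_m}},s}f$, and the choice of $\varrho_0$ so that $D(a_k,2r)\subset(\varrho_1\B_n)^c$) is sound.
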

\begin{proof}
Fix $\epsilon>0$ and $\theta>n\max\{1,q/p,1/p,1/q\}$, and write $B_{T_q^p(Z)}$ for the closed unit ball of $T^p_q(Z)$. Since $I_d:\HT_{q,\alpha}^p\to T^t_s(\mu)$ is compact, in view of Lemma \ref{test}, we can find $\lambda^{(1)},\cdots,\lambda^{(m)}\in B_{T^p_q(Z)}$ such that for any $\lambda\in B_{T^p_q(Z)}$, there exists some $j\in\{1,\cdots,m\}$ with
$$\left\|S^{\theta}_Z(\lambda)-S^{\theta}_Z(\lambda^{(j)})\right\|_{T^t_s(\mu)}<\epsilon.$$
On the other hand, by the Lebesgue dominated convergence theorem, there is $\varrho'_0\in(0,1)$ such that for every $j\in\{1,\cdots,m\}$,
$$\int_{\s_n}\left(\int_{\Gamma(\xi)\setminus\varrho'_0\B_n}\left|S^{\theta}_Z(\lambda^{(j)})(z)\right|^s
\frac{d\mu(z)}{(1-|z|^2)^n}\right)^{t/s}d\sigma(\xi)<\epsilon^t.$$
Consequently, for any $\lambda\in B_{T^p_q(Z)}$,
\begin{align*}
&\int_{\s_n}\left(\int_{\Gamma(\xi)\setminus\varrho'_0\B_n}\left|S^{\theta}_Z(\lambda)(z)\right|^s
	\frac{d\mu(z)}{(1-|z|^2)^n}\right)^{t/s}d\sigma(\xi)\\
&\ \lesssim\left\|S^{\theta}_Z(\lambda)-S^{\theta}_Z(\lambda^{(j)})\right\|^t_{T^t_s(\mu)}\\
&\qquad  +\int_{\s_n}\left(\int_{\Gamma(\xi)\setminus\varrho'_0\B_n}\left|S^{\theta}_Z(\lambda^{(j)})(z)\right|^s
	\frac{d\mu(z)}{(1-|z|^2)^n}\right)^{t/s}d\sigma(\xi)\\
&\ \lesssim\epsilon^t,
\end{align*}
which implies that for any $\lambda\in T_q^p(Z)$,
$$\int_{\s_n}\left(\int_{\Gamma(\xi)\setminus\varrho'_0\B_n}\left|S^{\theta}_Z(\lambda)(z)\right|^s
\frac{d\mu(z)}{(1-|z|^2)^n}\right)^{t/s}d\sigma(\xi)\lesssim\epsilon^t\|\lambda\|^t_{T_q^p(Z)}.$$
Now, by the same process as in the proof of Proposition \ref{nece}, we can establish that
$$\int_{\s_n}\left(\sum_{a_k\in\Gamma(\xi)\setminus\varrho_0\B_n}|\lambda_k|^s\eta^{(2r)}_k\right)^{t/s}d\sigma(\xi)\lesssim
\epsilon^t\|\lambda\|^t_{T_q^p(Z)},$$
where $\varrho_0=\inf\{|w|:D(w,2r)\subset\B_n\setminus\varrho'_0\B_n\}$.
\end{proof}

Using Montel's theorem, it is easy to obtain that, $I_d:\HT_{q,\alpha}^p\to T^t_s(\mu)$ is compact if and only if for any bounded sequence $\{f_j\}\subset\HT^p_{q,\alpha}$ that converges to $0$ uniformly on compact subsets of $\B_n$, we have $\lim_{j\to\infty}\|f_j\|_{T^t_s(\mu)}=0$.

We are now ready to prove Theorem \ref{cpt}.

\begin{proof}[Proof of Theorem \ref{cpt}]
Fix $\epsilon>0$, and let $\{f_j\}$ be a bounded sequence in $\HT^p_{q,\alpha}$ that converges to $0$ uniformly on compact subsets of $\B_n$.

(1) This can be proved by a standard modification of the proof of Theorem \ref{case1}. The details are omitted.

(2) Suppose first that $I_d:\HT_{q,\alpha}^p\to T^t_s(\mu)$ is compact. Then by Proposition \ref{cpt-nece}, there exists $\varrho_0\in(0,1)$ such that for any $\lambda=\{\lambda_k\}\in T^p_q(Z)$,
$$\int_{\s_n}\left(\sum_{a_k\in\Gamma(\xi)}|\lambda_k|^s\eta^{(2r)}_k\chi_{(\varrho_0\B_n)^c}(a_k)\right)^{t/s}d\sigma(\xi)\lesssim
\epsilon^t\|\lambda\|^t_{T_q^p(Z)}.$$
Now, using the dual and factorization of sequence tent spaces just as in the proof of Theorem \ref{case2}, we can obtain that
$$\left\|\left\{\eta^{(2r)}_k\chi_{(\varrho_0\B_n)^c}(a_k)\right\}\right\|_{T^{\infty}_{\frac{q}{q-s}}(Z)}\lesssim\epsilon^s.$$
Let $\varrho_1=\sup\{|z|:z\in D(a_k,r),|a_k|<\varrho_0\}$. Then we have
\begin{equation}\label{rho'}
(\varrho_1\B_n)^c\subset\bigcup_{a_k\in(\varrho_0\B_n)^c}D(a_k,r).
\end{equation}
Consequently, by Lemma \ref{disCM} and its proof,
$$\left\|\chi_{(\varrho_1\B_n)^c}\nu_{\mu}\right\|_{CM}
\lesssim\left\|\left\{\eta^{(2r)}_k\chi_{(\varrho_0\B_n)^c}(a_k)\right\}\right\|^{\frac{q}{q-s}}_{T^{\infty}_{\frac{q}{q-s}}(Z)}
<\epsilon^{\frac{qs}{q-s}}.$$
Therefore,
$$\lim_{\varrho\to1^-}\left\|\chi_{(\varrho\B_n)^c}\nu_{\mu}\right\|_{CM}=0,$$
which, combined with Lemma \ref{rVCM}, implies that $\nu_{\mu}$ is a vanishing Carleson measure.

Conversely, suppose that $\nu_{\mu}$ is a vanishing Carleson measure. Then we can apply Lemma \ref{rVCM} to find $\varrho_2\in(0,1)$ such that
$$\left\|\chi_{(\varrho_2\B_n)^c}\nu_{\mu}\right\|_{CM}<\epsilon^{\frac{qs}{q-s}}.$$
This together with Lemma \ref{disCM} implies that
$$\left\|\left\{\eta^{(r/2)}_k\chi_{(\varrho'_2\B_n)^c}(a_k)\right\}\right\|_{T^{\infty}_{\frac{q}{q-s}}(Z)}
\lesssim\left\|\chi_{(\varrho_2\B_n)^c}\nu_{\mu}\right\|^{\frac{q-s}{q}}_{CM}<\epsilon^{s},$$
where $\varrho'_2=\inf\{|a_k|:D(a_k,r/2)\subset(\varrho_2\B_n)^c\}$. Since $f_j\to0$ uniformly on compact subsets of $\B_n$, we may find $J_2\geq1$ such that for $j\geq J_2$,
$$|f_j(z)|<\epsilon,\quad \forall z\in\bigcup_{a_k\in\varrho'_2\B_n}D(a_k,r).$$
Consequently, the estimate \eqref{ele1} yields that for any $\xi\in\s_n$ and $j\geq J_2$,
\begin{align*}
\left(A_{\mu,s}f_j(\xi)\right)^s
&\lesssim\left(\sum_{a_k\in\widetilde{\Gamma}(\xi)\cap\varrho'_2\B_n}+\sum_{a_k\in\widetilde{\Gamma}(\xi)\setminus\varrho'_2\B_n}\right)
    \left(\int_{D(a_k,r)}|f_j|^qdv_{\alpha}\right)^{s/q}\cdot\eta^{(r/2)}_k\\
&<\epsilon^s\sum_{a_k\in\widetilde{\Gamma}(\xi)}\left(\int_{D(a_k,r)}dv_{\alpha}\right)^{s/q}\cdot\eta^{(r/2)}_k\\
&\quad    +\sum_{a_k\in\widetilde{\Gamma}(\xi)}
    \left(\int_{D(a_k,r)}|f_j|^qdv_{\alpha}\right)^{s/q}\cdot\eta^{(r/2)}_k\chi_{(\varrho'_2\B_n)^c}(a_k),
\end{align*}
which implies that
$$\|f_j\|^t_{T^t_s(\mu)}\lesssim\epsilon^t\left\|\eta^{(r/2)}\right\|^{t/s}_{T^{\infty}_{\frac{q}{q-s}}(Z)}
+\|f_j\|^t_{\HT^p_{q,\alpha}}\cdot
\left\|\left\{\eta^{(r/2)}_k\chi_{(\varrho'_2\B_n)^c}(a_k)\right\}\right\|^{t/s}_{T^{\infty}_{\frac{q}{q-s}}(Z)}\lesssim\epsilon^t;$$
see the proof of Theorem \ref{case2}. Therefore, we obtain the desired compactness.

(3) For $\varrho\in[0,1)$, let
$$U_{\mu,\varrho}(\xi)=\left(\int_{\Gamma(\xi)\setminus\varrho\B_n}\widehat{\mu}_r(z)^{\frac{q}{q-s}}dv_{\alpha}(z)\right)^{\frac{q-s}{q}},
\quad \xi\in\s_n.$$
In view of Theorem \ref{main}, it suffices to show that $U_{\mu,0}\in L^{\frac{pt}{s(p-t)}}(\s_n)$ implies the compactness of $I_d:\HT_{q,\alpha}^p\to T^t_s(\mu)$. Since $U_{\mu,0}\in L^{\frac{pt}{s(p-t)}}(\s_n)$, the Lebesgue dominated convergence theorem yields that $\lim_{\varrho\to1^-}\left\|U_{\mu,\varrho}\right\|_{L^{\frac{pt}{s(p-t)}}(\s_n)}=0$. Therefore, by a standard modification of the proof of Theorem \ref{case3}, we can obtain that $\lim_{j\to\infty}\|f_j\|_{T^t_s(\mu)}=0$, which implies the desired compactness.

(4) For $\varrho\in[0,1)$, let
$$V_{\mu,\varrho}(\xi)=\sup_{z\in\Gamma(\xi)\setminus \varrho\B_n}\widehat{\mu}_r(z)(1-|z|^2)^{\frac{(q-s)(n+1+\alpha)}{q}}.$$
Assume first that $\lim_{\varrho\to1^-}\|V_{\mu,\varrho}\|_{L^{\frac{pt}{s(p-t)}}(\s_n)}=0$. Since $\mu$ is finite on compact subsets of $\B_n$, we obtain that $V_{\mu,0}\in L^{\frac{pt}{s(p-t)}}(\s_n)$. Then, by a standard modification of the proof of Theorem \ref{case4}, we can establish that $I_d:\HT_{q,\alpha}^p\to T^t_s(\mu)$ is compact.

We now look for the necessity. Suppose that $I_d:\HT_{q,\alpha}^p\to T^t_s(\mu)$ is compact. Then by Proposition \ref{cpt-nece}, there exists $\varrho_0\in(0,1)$ such that for any $\lambda=\{\lambda_k\}\in T^p_q(Z)$,
$$\int_{\s_n}\left(\sum_{a_k\in\Gamma(\xi)}|\lambda_k|^s\eta^{(2r)}_k\chi_{(\varrho_0\B_n)^c}(a_k)\right)^{t/s}d\sigma(\xi)
\lesssim\epsilon^t\|\lambda\|^t_{T_q^p(Z)}.$$
Using the same method as in the proof of Theorem \ref{case4}, we obtain that
$$\left\|\left\{\eta^{(2r)}_k\chi_{(\varrho_0\B_n)^c}(a_k)\right\}\right\|_{T^{\frac{pt}{s(p-t)}}_{\infty}(Z)}\lesssim\epsilon^s.$$
Therefore, it follows from \eqref{rho'} and Lemma \ref{dis} that
\begin{align*}
\|V_{\mu,\varrho_1}\|^{\frac{pt}{s(p-t)}}_{L^{\frac{pt}{s(p-t)}}(\s_n)}
&=\int_{\s_n}\left(\sup_{z\in\Gamma(\xi)\setminus\varrho_1\B_n}\widehat{\mu}_r(z)(1-|z|^2)^{\frac{(q-s)(n+1+\alpha)}{q}}\right)
    ^{\frac{pt}{s(p-t)}}d\sigma(\xi)\\
&\lesssim\int_{\s_n}\left(\sup_{a_k\in\Gamma(\xi)\setminus\varrho_0\B_n}
    \widehat{\mu}_{2r}(a_k)(1-|a_k|^2)^{\frac{(q-s)(n+1+\alpha)}{q}}\right)^{\frac{pt}{s(p-t)}}d\sigma(\xi)\\
&=\left\|\left\{\eta^{(2r)}_k\chi_{(\varrho_0\B_n)^c}(a_k)\right\}\right\|^{\frac{pt}{s(p-t)}}_{T^{\frac{pt}{s(p-t)}}_{\infty}(Z)}\\
&\lesssim\epsilon^{\frac{pt}{p-t}}.
\end{align*}
Consequently, $\lim_{\varrho\to1^-}\|V_{\mu,\varrho}\|_{L^{\frac{pt}{s(p-t)}}(\s_n)}=0$ and the proof is complete.
\end{proof}

Similar to Corollary \ref{incl}, we can use Theorem \ref{cpt} to determine when the inclusion $\HT^p_{q,\alpha}\subset\HT^t_{s,\beta}$ is compact.

\begin{corollary}
Let $\alpha,\beta>-n-1$ and $0<p,q,s,t<\infty$. Then the inclusion $\HT^p_{q,\alpha}\subset\HT^t_{s,\beta}$ is compact if and only if one of the following conditions holds:
\begin{enumerate}
	\item [(i)] $p\geq t$ and $\frac{n+1+\alpha}{q}<\frac{n+1+\beta}{s}$;
	\item [(ii)] $p<t$ and $\frac{n+1+\alpha}{q}+\frac{n}{p}<\frac{n+1+\beta}{s}+\frac{n}{t}$.
\end{enumerate}
\end{corollary}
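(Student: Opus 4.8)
The plan is to mirror the proof of Corollary~\ref{incl}, replacing the boundedness criterion of Theorem~\ref{main} by the compactness criterion of Theorem~\ref{cpt}. Since $\HT^t_{s,\beta}=T^t_s(v_{\beta+n})\cap\h(\B_n)$ inherits its (quasi-)norm from $T^t_s(v_{\beta+n})$, the inclusion $\HT^p_{q,\alpha}\subset\HT^t_{s,\beta}$ is compact if and only if the embedding $I_d:\HT^p_{q,\alpha}\to T^t_s(v_{\beta+n})$ is compact. Hence everything reduces to specializing Theorem~\ref{cpt} to $\mu=v_{\beta+n}$, and as recorded in the proof of Corollary~\ref{incl} one has, for any fixed $r\in(0,1)$, the single asymptotic $\widehat{(v_{\beta+n})_r}(z)\asymp(1-|z|^2)^{\beta-\alpha}$. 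Substituting this into each of the four conditions of Theorem~\ref{cpt} is the whole engine of the proof.

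First I would treat the regime $p\ge t$ and show that all the relevant sub-cases collapse to condition~(i). When $p=t$ and $q\le s$ (part~(1) of Theorem~\ref{cpt}), the criterion becomes $\lim_{|z|\to1^-}(1-|z|^2)^{E}=0$ with $E=\beta-\alpha+\frac{(q-s)(n+1+\alpha)}{q}$, which holds precisely when $E>0$; a routine simplification rewrites $E>0$ as $\frac{n+1+\alpha}{q}<\frac{n+1+\beta}{s}$. When $p=t$ and $q>s$ (part~(2)), the measure $\widehat{\mu}_r^{\,q/(q-s)}\,dv_{\alpha+n}$ becomes, up to constants, a weighted volume measure $(1-|z|^2)^{n+\gamma}\,dv(z)$ with $\gamma=\frac{q(\beta-\alpha)}{q-s}+\alpha$; using the characterization \eqref{VCM} together with the Forelli--Rudin estimate, such a measure is a vanishing Carleson measure exactly when $\gamma>-n-1$, which again unwinds to $\frac{n+1+\alpha}{q}<\frac{n+1+\beta}{s}$. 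When $p>t$ and $q>s$ (part~(3)), the compactness condition of Theorem~\ref{cpt} is exactly the boundedness condition of Theorem~\ref{main}, so compactness is equivalent to boundedness and Corollary~\ref{incl}(i) applies directly. Finally, when $p>t$ and $q\le s$ (part~(4)), the integrand reduces to $\sup_{z\in\Gamma(\xi)\setminus\varrho\B_n}(1-|z|^2)^{d}$ with $d=\beta-\alpha+\frac{(q-s)(n+1+\alpha)}{q}$; on $\Gamma(\xi)\setminus\varrho\B_n$ this supremum is comparable to $(1-\varrho^2)^{d}$ when $d>0$ and is constant or infinite when $d\le0$, so the limit in~$\varrho$ vanishes if and only if $d>0$, i.e. once more $\frac{n+1+\alpha}{q}<\frac{n+1+\beta}{s}$.

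It remains to handle the regime $p<t$ (the other half of part~(1) of Theorem~\ref{cpt}), where the same substitution yields $\lim_{|z|\to1^-}(1-|z|^2)^{E'}=0$ with $E'=\beta-\alpha+\frac{(q-s)(n+1+\alpha)}{q}+ns\bigl(\tfrac1t-\tfrac1p\bigr)$; demanding $E'>0$ and dividing by $s$ produces exactly condition~(ii), namely $\frac{n+1+\alpha}{q}+\frac np<\frac{n+1+\beta}{s}+\frac nt$. Collecting the four $p\ge t$ cases (all giving~(i)) together with the $p<t$ case (giving~(ii)) yields the corollary. I expect the bulk of the genuine work to lie not in any single estimate but in pinning down the strict-versus-nonstrict thresholds: in the cases $q\le s$ (parts~(1) with $p=t$, and~(4)) and in the case $p<t$, the exponent hitting $0$ makes a pointwise limit or a $\sup$-integral tend to a nonzero constant rather than to $0$, so boundedness survives the boundary while compactness does not, which is precisely why the inequalities in~(i) and~(ii) are strict whereas those in Corollary~\ref{incl}(ii),(iii) are not. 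Verifying this dichotomy carefully, and checking that the $q>s$ cases already force strictness through finiteness of the relevant Carleson measure, is the main point requiring attention.
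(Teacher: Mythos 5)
Your proposal is correct and follows exactly the route the paper intends: the paper states this corollary as an immediate consequence of Theorem \ref{cpt} ``similar to Corollary \ref{incl}'', i.e.\ by specializing each of the four cases of Theorem \ref{cpt} to $\mu=v_{\beta+n}$ via $\widehat{(v_{\beta+n})_r}(z)\asymp(1-|z|^2)^{\beta-\alpha}$, and your case-by-case exponent computations (including the vanishing-Carleson reduction for $p=t$, $q>s$, and the analysis of the strict-versus-nonstrict thresholds where boundedness survives the boundary exponent but compactness does not) match that argument. No gaps.
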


\section{Proof of Theorem \ref{super}}\label{su}

The aim of this section is to prove Theorem \ref{super}. To this end, we need some preliminary results. The first one establishes some test functions in $\HT^p_{q,\alpha}$.

\begin{lemma}\label{test-}
Let $0<p,q<\infty$, $\alpha>-n-1$, and assume
$$\frac{n+1+\alpha}{q}<\theta<\frac{n+1+\alpha}{q}+\frac{n}{p}.$$
Then for any $\zeta\in\s_n$, the function
$$g_{\zeta}(z):=\frac{1}{(1-\langle z,\zeta\rangle)^{\theta}},\quad z\in\B_n$$
belongs to $\HT^p_{q,\alpha}$.
\end{lemma}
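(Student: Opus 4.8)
The plan is to estimate the quasi-norm $\|g_{\zeta}\|_{\HT^p_{q,\alpha}}$ directly, following the same scheme as in the proof of Lemma \ref{te}. Since $dv_{\alpha}$ is precisely $d\mu/(1-|z|^2)^n$ when $d\mu=dv_{n+\alpha}$, we have
$$\|g_{\zeta}\|^p_{\HT^p_{q,\alpha}}=\int_{\s_n}\left(\int_{\Gamma(\xi)}\frac{(1-|z|^2)^{\alpha}dv(z)}{|1-\langle z,\zeta\rangle|^{\theta q}}\right)^{p/q}d\sigma(\xi),$$
so the whole task reduces to bounding the inner integral over the Kor\'anyi region and then checking integrability in $\xi$ over the sphere.

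First I would handle the inner integral. Fix a large $\kappa$ (to be specified). Using that $|1-\langle z,\xi\rangle|\asymp 1-|z|^2$ for $z\in\Gamma(\xi)$, I insert the comparable factor $(1-|z|^2)^{\kappa}/|1-\langle z,\xi\rangle|^{\kappa}\asymp1$ and enlarge the domain of integration from $\Gamma(\xi)$ to all of $\B_n$ (the integrand being positive), which gives
$$\int_{\Gamma(\xi)}\frac{(1-|z|^2)^{\alpha}dv(z)}{|1-\langle z,\zeta\rangle|^{\theta q}}
\lesssim\int_{\B_n}\frac{(1-|z|^2)^{\alpha+\kappa}dv(z)}{|1-\langle z,\xi\rangle|^{\kappa}|1-\langle z,\zeta\rangle|^{\theta q}}.$$
Now I apply the Forelli--Rudin estimate of Lemma \ref{FRgeneral} with $s=\alpha+\kappa$, $r=\kappa$ and $t=\theta q$. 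The hypotheses $s>-1$, $s+n+1>r$ (i.e. $\alpha+n+1>0$) hold automatically since $\alpha>-n-1$, while $s+n+1>t$ holds for $\kappa$ large; the crucial remaining hypothesis $r+t>s+n+1$ reduces exactly to $\theta q>n+1+\alpha$, which is the left inequality in the assumption on $\theta$. The lemma then yields, since $r+t-s-n-1=\theta q-n-1-\alpha$,
$$\int_{\Gamma(\xi)}\frac{(1-|z|^2)^{\alpha}dv(z)}{|1-\langle z,\zeta\rangle|^{\theta q}}
\lesssim\frac{1}{|1-\langle\xi,\zeta\rangle|^{\theta q-n-1-\alpha}}.$$

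Finally I would raise this to the power $p/q$ and integrate over $\s_n$, reducing the problem to the standard sphere integral $\int_{\s_n}|1-\langle\xi,\zeta\rangle|^{-a}d\sigma(\xi)$ with $a=(\theta q-n-1-\alpha)p/q$. Such an integral is finite precisely when $a<n$ (see e.g. \cite[Theorem 1.12]{Zhuball} together with the boundary behaviour of the Forelli--Rudin integral), and the left hypothesis already forces $a>0$; the inequality $a<n$ unwinds to $\theta<\frac{n+1+\alpha}{q}+\frac{n}{p}$, which is exactly the right inequality in the hypothesis. Hence $\|g_{\zeta}\|_{\HT^p_{q,\alpha}}<\infty$, so $g_{\zeta}\in\HT^p_{q,\alpha}$. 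The two hypotheses on $\theta$ thus play distinct and sharp roles: the lower bound guarantees convergence of the Forelli--Rudin (ball) integral, and the upper bound guarantees convergence of the resulting sphere integral. The only point needing a little care is the choice of $\kappa$: the decisive inequalities $r+t>s+n+1$ and $s+n+1>r$ are independent of $\kappa$ (reducing to $\theta q>n+1+\alpha$ and $\alpha+n+1>0$), while $s+n+1>t$ is met by taking $\kappa$ large, so a suitable $\kappa$ always exists and no obstruction arises.
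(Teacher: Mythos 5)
Your proof is correct and is essentially the paper's own argument: both insert the comparable factor $(1-|z|^2)^{\kappa}/|1-\langle z,\xi\rangle|^{\kappa}\asymp1$ on $\Gamma(\xi)$, enlarge to $\B_n$, apply Lemma \ref{FRgeneral} (where $\theta q>n+1+\alpha$ is exactly the surviving hypothesis), and then use $p\theta-\frac{p(n+1+\alpha)}{q}<n$ to make the resulting sphere integral finite. Your bookkeeping of which Forelli--Rudin hypotheses depend on $\kappa$ matches the paper's choice $\eta>\max\{n,nq/p\}$, so there is nothing to add.
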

\begin{proof}
For any $\xi\in\s_n\setminus\{\zeta\}$, bearing in mind that $1-|z|^2\asymp|1-\langle z,\xi\rangle|$ if $z\in\Gamma(\xi)$, we may choose $\eta>\max\{n,nq/p\}$ and apply Lemma \ref{FRgeneral} to obtain that
\begin{align*}
\int_{\Gamma(\xi)}|g_{\zeta}(z)|^qdv_{\alpha}(z)
\lesssim\int_{\B_n}\frac{(1-|z|^2)^{\alpha+\eta}dv(z)}{|1-\langle z,\zeta\rangle|^{q\theta}|1-\langle z,\xi\rangle|^{\eta}}
\lesssim\frac{1}{|1-\langle \xi,\zeta\rangle|^{q\theta-n-1-\alpha}},
\end{align*}
which, in conjunction with the fact that $p\theta-p(n+1+\alpha)/q<n$, implies that
$$\int_{\s_n}\left(\int_{\Gamma(\xi)}|g_{\zeta}(z)|^qdv_{\alpha}(z)\right)^{p/q}d\sigma(\xi)
\lesssim\int_{\s_n}\frac{d\sigma(\xi)}{|1-\langle \xi,\zeta\rangle|^{p\theta-\frac{p(n+1+\alpha)}{q}}}<\infty.$$
Therefore, $g_{\zeta}\in\HT^{p}_{q,\alpha}$.
\end{proof}

The following lemma gives a necessary condition for the superposition operators between Hardy type tent spaces.

\begin{lemma}\label{poly}
Let $0<p,q,s,t<\infty$ and $\alpha,\beta>-n-1$. Suppose that $\varphi$ is a function on $\C$ such that the superposition operator $S_{\varphi}$ maps $\HT^p_{q,\alpha}$ into $\HT^t_{s,\beta}$. Then $\varphi$ is a polynomial.
\end{lemma}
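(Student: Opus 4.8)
The plan is to argue in two stages: first that $\varphi$ must be entire, and then that it has polynomial growth at infinity and is therefore a polynomial.

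To see that $\varphi$ is entire, I would feed affine functions into $S_{\varphi}$. For fixed $w_0,h\in\C$ with $h\neq0$, the function $z\mapsto w_0+hz_1$ is a bounded holomorphic function on $\B_n$, hence lies in $\HT^p_{q,\alpha}$ (the defining measure is finite). By hypothesis $\varphi(w_0+hz_1)\in\HT^t_{s,\beta}\subset\h(\B_n)$, so it is holomorphic on $\B_n$. Restricting to the complex line $\{(\zeta,0,\dots,0):\zeta\in\mathbb{D}\}$ shows that $\zeta\mapsto\varphi(w_0+h\zeta)$ is holomorphic on $\mathbb{D}$, and composing with the affine biholomorphism $w\mapsto(w-w_0)/h$ shows that $\varphi$ is holomorphic on the disk $\{|w-w_0|<|h|\}$. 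Since $w_0$ and $h\neq0$ are arbitrary, $\varphi$ is entire.

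For the second stage I would first record a pointwise growth estimate for the target space: every $h\in\HT^t_{s,\beta}$ satisfies
$$|h(z)|\lesssim\|h\|_{\HT^t_{s,\beta}}\,(1-|z|^2)^{-\frac{n+1+\beta}{s}-\frac{n}{t}},\qquad z\in\B_n.$$
This follows by choosing $t'>t$ and using the bounded inclusion $\HT^t_{s,\beta}\subset A^{t'}_{(t'/t-1)n-1+t'(n+1+\beta)/s}$ from Lemma \ref{inclu}, together with the standard pointwise estimate for Bergman spaces; the resulting weight exponent works out to be exactly $\frac{n+1+\beta}{s}+\frac{n}{t}$. Next I would exploit the test functions of Lemma \ref{test-}. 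Fix $\zeta\in\s_n$ and $\theta\in\bigl(\tfrac{n+1+\alpha}{q},\tfrac{n+1+\alpha}{q}+\tfrac{n}{p}\bigr)$, so that $g_{\zeta}\in\HT^p_{q,\alpha}$, and consider the rotated functions $cg_{\zeta}$ with $|c|=1$; each lies in $\HT^p_{q,\alpha}$, whence $\varphi\circ(cg_{\zeta})\in\HT^t_{s,\beta}$. Taking $z=(1-u)\zeta$ with $u$ in the Euclidean disk $\{|u-1|<1\}$ and $\arg u$ small, one computes $cg_{\zeta}(z)=cu^{-\theta}=:w$, so $|w|=|u|^{-\theta}$ and $1-|z|^2=2\Re u-|u|^2\asymp|u|\asymp|w|^{-1/\theta}$. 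Inserting this into the pointwise estimate yields, on a fixed open sector about the direction $\arg c$,
$$|\varphi(w)|\lesssim\|\varphi\circ(cg_{\zeta})\|_{\HT^t_{s,\beta}}\,|w|^{N},\qquad N=\frac{1}{\theta}\left(\frac{n+1+\beta}{s}+\frac{n}{t}\right),$$
for all large $|w|$. Choosing finitely many unimodular constants $c_1,\dots,c_m$ whose associated sectors cover $\C$, and taking the maximum of the finitely many finite constants $\|\varphi\circ(c_jg_{\zeta})\|_{\HT^t_{s,\beta}}$, I obtain $|\varphi(w)|\lesssim|w|^{N}$ for all large $|w|$, so the entire function $\varphi$ is a polynomial.

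The main obstacle is the second stage: the statement assumes only that $S_{\varphi}$ maps $\HT^p_{q,\alpha}$ into $\HT^t_{s,\beta}$, with no a priori bound on this nonlinear operator, so each individual test function controls $\varphi$ only on a sector and only up to a constant depending on that function. The point that rescues the argument without operator boundedness is that the image of a single $g_{\zeta}$ is a genuine two-dimensional sector rather than a single ray, so finitely many rotations suffice to cover the whole plane and the supremum of finitely many finite constants is finite. One must also be careful to select the points $z=(1-u)\zeta$ so that $1-|z|^2\asymp|u|$ holds (i.e. restricting to $\arg u$ bounded away from $\pm\pi/2$), which is exactly what keeps the extracted growth exponent $N$ uniform across all directions.
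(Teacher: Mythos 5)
Your proposal is correct and takes essentially the same route as the paper's proof: holomorphy of the compositions with affine functions of $z_1$ gives that $\varphi$ is entire, and the growth bound rests on the same ingredients, namely the test functions $g_{\zeta}$ of Lemma \ref{test-}, the pointwise estimate $|h(z)|\lesssim\|h\|_{\HT^t_{s,\beta}}(1-|z|^2)^{-\frac{n+1+\beta}{s}-\frac{n}{t}}$ (which the paper imports from \cite[Lemma 2.2]{WZ22} and you rederive from Lemma \ref{inclu}), and the rotation trick, since rotating the test function $cg_{\zeta}$ is the same as the paper's passage to $\varphi_a(u)=\varphi(au)$. The only cosmetic difference is organizational: you cover $\C$ directly by finitely many sectors, whereas the paper argues by contradiction along a sequence $u_j$ normalized (after rotation and passing to a subsequence) so that $\arg u_j\to0$, choosing $\theta>\frac{1}{N+1}\left(\frac{n+1+\beta}{s}+\frac{n}{t}\right)$ so as to extract the explicit degree bound $N$ along the way, a bound your version also yields by letting $\theta$ approach $\frac{n+1+\alpha}{q}+\frac{n}{p}$.
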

\begin{proof}
For any $\delta>0$, let
$$f_{\delta}(z)=\delta z_1,\quad z=(z_1,\cdots,z_n)\in\B_n.$$
It is clear that $f_{\delta}\in\HT^{p}_{q,\alpha}$. Since $S_{\varphi}$ maps $\HT^p_{q,\alpha}$ into $\HT^t_{s,\beta}$, we know that the function $\varphi\circ f_{\delta}=S_{\varphi}f_{\delta}$ belongs to $\HT^{t}_{s,\beta}$. Consequently, the function $z\mapsto\varphi(\delta z_1)$ is holomorphic on $\B_n$, that is, the function $z_1\mapsto\varphi(\delta z_1)$ is analytic on the unit disk $\mathbb{D}$. Since $\delta>0$ is arbitrary, we obtain that $\varphi$ is an entire function on $\C$.
	
We next show that $\varphi$ is a polynomial of degree at most
$$N:=\left[\frac{pq(t(n+1+\beta)+ns)}{st(p(n+1+\alpha)+nq)}\right].$$
To this end, it is sufficient to establish that
\begin{equation}\label{Npoly}
\lim_{\varrho\to\infty}\frac{M_{\infty}(\varphi,\varrho)}{\varrho^{N+1}}=0,
\end{equation}
where $M_{\infty}(\varphi,\varrho):=\sup_{u\in\C:|u|=\varrho}|\varphi(u)|$. We complete the proof by contradiction. Assume that \eqref{Npoly} does not hold. Then there exist $\kappa>0$ and a sequence $\{u_j\}\subset\C$ with $|u_j|>1$ such that $|u_j|\to\infty$ and
\begin{equation}\label{contra}
|\varphi(u_j)|\geq \kappa|u_j|^{N+1},\quad \forall j\geq1.
\end{equation}
It is clear that for any unimodular constant $a\in\C$, the function $\varphi_{a}(u):=\varphi(au)$ also induces a superposition operator that maps $\HT^p_{q,\alpha}$ into $\HT^t_{s,\beta}$. Hence, passing to a subsequence if necessary, we may assume that $\arg u_j\to0$. Fix now $\zeta\in\s_n$ and
$$\max\left\{\frac{1}{N+1}\left(\frac{n+1+\beta}{s}+\frac{n}{t}\right),\frac{n+1+\alpha}{q}\right\}
<\theta<\frac{n+1+\alpha}{q}+\frac{n}{p},$$
and define
$$g_{\zeta}(z)=\frac{1}{(1-\langle z,\zeta\rangle)^{\theta}},\quad z\in\B_n.$$
Then by Lemma \ref{test-}, $g_{\zeta}\in\HT^{p}_{q,\alpha}$. Consequently, $\varphi\circ g_{\zeta}=S_{\varphi}g_{\zeta}\in\HT^{t}_{s,\beta}$. Since $\arg u_j\to0$, passing to a further subsequence if necessary, we may assume that $|\arg u_j|<\frac{\theta\pi}{4}$ for any $j\geq1$. For each $j\geq1$, write $v_j=1-u_j^{-1/\theta}$ and $w_j=v_j\zeta$. Since $|1-v_j|<1$ and $|\arg(1-v_j)|<\frac{\pi}{4}$, all the points $v_j$ belong to a Stolz domain with vertex at $1$. Consequently, $w_j\in\B_n$, $g_{\zeta}(w_j)=u_j$, and
\begin{equation}\label{stolz}
1-|w_j|=1-|v_j|\asymp|1-v_j|.
\end{equation}
Since $\varphi\circ g_{\zeta}\in\HT^{t}_{s,\beta}$, \cite[Lemma 2.2]{WZ22} together with \eqref{contra} and \eqref{stolz} yields that
\begin{align*}
\kappa|u_j|^{N+1}&\leq|\varphi(u_j)|=|\varphi(g_{\zeta}(w_j))|\\
&\lesssim(1-|w_j|^2)^{-\frac{n+1+\beta}{s}-\frac{n}{t}}\\
&\asymp|1-v_j|^{-\frac{n+1+\beta}{s}-\frac{n}{t}}\\
&=|u_j|^{\left(\frac{n+1+\beta}{s}+\frac{n}{t}\right)/\theta},
\end{align*}
which contradicts the fact that $|u_j|\to\infty$ since $\left(\frac{n+1+\beta}{s}+\frac{n}{t}\right)/\theta<N+1$. Therefore, $\varphi$ is a polynomial of degree at most $N$, and the proof is complete.
\end{proof}

We next characterize the monomials $\varphi$ that induce superposition operators $S_{\varphi}$ mapping $\HT^{p}_{q,\alpha}$ into $\HT^{t}_{s,\beta}$.

\begin{proposition}\label{moni}
Let $0<p,q,s,t<\infty$ and $\alpha,\beta>-n-1$. Suppose that $N$ is a non-negative integer and $\varphi(u)=u^N,u\in\C$. Then $S_{\varphi}$ maps $\HT^{p}_{q,\alpha}$ into $\HT^{t}_{s,\beta}$ if and only if $N$ satisfies
\begin{enumerate}
\item [(i)] $N\leq\frac{pq(t(n+1+\beta)+ns)}{st(p(n+1+\alpha)+nq)}$ in the case of $p(n+1+\alpha)/q<t(n+1+\beta)/s$;
\item [(ii)] $N\leq\frac{q(n+1+\beta)}{s(n+1+\alpha)}$ in the case of $p(n+1+\alpha)/q\geq t(n+1+\beta)/s$ and $\alpha\leq\beta$;
\item [(iii)] $N<\frac{q(n+1+\beta)}{s(n+1+\alpha)}$ in the case of $p(n+1+\alpha)/q\geq t(n+1+\beta)/s$ and $\alpha>\beta$.
\end{enumerate}
Moreover, if $S_{\varphi}$ maps $\HT^{p}_{q,\alpha}$ into $\HT^{t}_{s,\beta}$, then it is bounded.
\end{proposition}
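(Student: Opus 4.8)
The plan is to reduce everything to the inclusion of Corollary \ref{incl} by means of an exact norm identity available in the monomial case. Since $\varphi(u)=u^N$ we have $S_\varphi f=f^N$, and unwinding the definition of the (quasi-)norm (recall $dv_{\beta+n}(z)/(1-|z|^2)^n=(1-|z|^2)^\beta\,dv(z)$) gives, for every $f\in\h(\B_n)$,
$$\|f^N\|_{\HT^t_{s,\beta}}^t=\int_{\s_n}\left(\int_{\Gamma(\xi)}|f(z)|^{Ns}(1-|z|^2)^\beta\,dv(z)\right)^{t/s}d\sigma(\xi)=\|f\|_{\HT^{Nt}_{Ns,\beta}}^{Nt}.$$
Hence $\|f^N\|_{\HT^t_{s,\beta}}=\|f\|_{\HT^{Nt}_{Ns,\beta}}^{N}$, and in particular $S_\varphi$ maps $\HT^p_{q,\alpha}$ into $\HT^t_{s,\beta}$ if and only if the set inclusion $\HT^p_{q,\alpha}\subset\HT^{Nt}_{Ns,\beta}$ holds. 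This is exactly the situation handled by Corollary \ref{incl}, now applied with target parameters $(Nt,Ns,\beta)$ in place of $(t,s,\beta)$.

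Next I would translate the three alternatives of Corollary \ref{incl} into inequalities in $N$. Writing $a=(n+1+\alpha)/q$ and $b=(n+1+\beta)/s$, and setting
$$N_1=\frac{p}{t},\quad N_2=\frac{q}{s},\quad N_3=\frac{b}{a}=\frac{q(n+1+\beta)}{s(n+1+\alpha)},\quad N_4=\frac{b+n/t}{a+n/p}=\frac{pq(t(n+1+\beta)+ns)}{st(p(n+1+\alpha)+nq)},$$
the conditions for $\HT^p_{q,\alpha}\subset\HT^{Nt}_{Ns,\beta}$ become: (I) $N\le N_1$, $N<N_2$, $N<N_3$; (II) $N\le N_1$, $N\ge N_2$, $N\le N_3$; (III) $N>N_1$, $N\le N_4$. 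The arithmetic fact I would invoke repeatedly is that $N_4$ is the mediant of $N_1=(n/t)/(n/p)$ and $N_3=b/a$, so $N_4$ always lies between $N_1$ and $N_3$; moreover $N_3/N_2=(n+1+\beta)/(n+1+\alpha)$, so $N_3\ge N_2\Leftrightarrow\alpha\le\beta$.

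The case analysis then splits according to $N_1$ versus $N_3$, which is precisely the dichotomy $p(n+1+\alpha)/q\lessgtr t(n+1+\beta)/s$. In case (i), $N_1<N_3$, hence $N_1<N_4<N_3$: for $N\le N_1$ one of (I),(II) holds (as $N<N_3$ is automatic), for $N_1<N\le N_4$ alternative (III) holds, and for $N>N_4$ (note $N_4\ge N_1$) all three fail; thus the inclusion holds exactly when $N\le N_4$. In cases (ii),(iii), $N_1\ge N_3$, so $N_3\le N_4\le N_1$ and the threshold is $N_3$: for $N<N_3$ one of (I),(II) holds as above, while for $N>N_3$ alternatives (I),(II) fail (they force $N\le N_3$) and (III) fails (it forces $N\le N_4\le N_1<N$). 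The only delicate point, and the step I expect to require the most care, is the boundary value $N=N_3$, where the strict inequality $a<b/N$ of (I) degenerates to the equality $a=b/N$; there (I) fails, so the inclusion holds iff (II) holds, i.e. iff $N_2\le N_3$. By the identity $N_3\ge N_2\Leftrightarrow\alpha\le\beta$, the value $N=N_3$ is admissible exactly when $\alpha\le\beta$, which yields the inclusive bound $N\le N_3$ of case (ii) and the strict bound $N<N_3$ of case (iii).

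Finally, the boundedness assertion follows from the same identity. When $\HT^p_{q,\alpha}\subset\HT^{Nt}_{Ns,\beta}$ holds, the closed graph theorem (exactly as in the proof of Corollary \ref{incl}) makes it a bounded embedding with some constant $C$, whence
$$\|S_\varphi f\|_{\HT^t_{s,\beta}}=\|f\|_{\HT^{Nt}_{Ns,\beta}}^{N}\le C^{N}\|f\|_{\HT^p_{q,\alpha}}^{N}.$$
Thus $S_\varphi$ carries bounded subsets of $\HT^p_{q,\alpha}$ into bounded subsets of $\HT^t_{s,\beta}$, which is the desired boundedness.
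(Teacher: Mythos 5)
Your proposal is correct and follows essentially the same route as the paper: the exact identity $\|f^N\|_{\HT^{t}_{s,\beta}}=\|f\|^{N}_{\HT^{Nt}_{Ns,\beta}}$ reduces the question to the inclusion $\HT^{p}_{q,\alpha}\subset\HT^{Nt}_{Ns,\beta}$, which is then decided by Corollary \ref{incl}, with your $N_1,\dots,N_4$ (mediant) bookkeeping being an explicit version of the paper's case analysis and the boundedness obtained from the closed graph theorem exactly as there. One small nit: the norm identity presupposes $N\geq1$ (for $N=0$ the space $\HT^{Nt}_{Ns,\beta}$ is undefined), so the trivial case $N=0$, where $S_{\varphi}f\equiv1\in\HT^{t}_{s,\beta}$ because $\beta>-n-1$, should be disposed of separately, as the paper does.
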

\begin{proof}
If $N=0$, then it is clear that $S_{\varphi}$ maps $\HT^{p}_{q,\alpha}$ boundedly into $\HT^{t}_{s,\beta}$. So we assume that $N\geq1$. Then for any $f\in\h(\B_n)$, $f^N\in\HT^{t}_{s,\beta}$ if and only if $f\in\HT^{Nt}_{Ns,\beta}$. Moreover,
$$\|f^N\|_{\HT^{t}_{s,\beta}}=\|f\|^N_{\HT^{Nt}_{Ns,\beta}}.$$
Therefore, $S_{\varphi}$ maps $\HT^{p}_{q,\alpha}$ into $\HT^{t}_{s,\beta}$ if and only if the inclusion $\HT^{p}_{q,\alpha}\subset\HT^{Nt}_{Ns,\beta}$ holds. Consequently, we may apply Corollary \ref{incl} to obtain that $S_{\varphi}$ maps $\HT^{p}_{q,\alpha}$ into $\HT^{t}_{s,\beta}$ if and only if one of the following conditions holds:
\begin{enumerate}
	\item [($\dag$):] $p\geq Nt$, $q>Ns$ and $\frac{n+1+\alpha}{q}<\frac{n+1+\beta}{Ns}$;
	\item [($\ddag$):] $p\geq Nt$, $q\leq Ns$ and $\frac{n+1+\alpha}{q}\leq\frac{n+1+\beta}{Ns}$;
	\item [($\S$):] $p<Nt$ and $\frac{n+1+\alpha}{q}+\frac{n}{p}\leq\frac{n+1+\beta}{Ns}+\frac{n}{Nt}$.
\end{enumerate}
Note that the condition ($\S$) is trivial if $p(n+1+\alpha)/q\geq t(n+1+\beta)/s$, and the condition ($\ddag$) is trivial if $\alpha>\beta$. Hence in the case of $p(n+1+\alpha)/q\geq t(n+1+\beta)/s$ and $\alpha>\beta$, $S_{\varphi}$ maps $\HT^{p}_{q,\alpha}$ into $\HT^{t}_{s,\beta}$ if and only if ($\dag$) holds, which is in turn equivalent to $N<\frac{q(n+1+\beta)}{s(n+1+\alpha)}$. In the case of $p(n+1+\alpha)/q\geq t(n+1+\beta)/s$ and $\alpha\leq\beta$, the condition ($\dag$) is equivalent to $N<\frac{q}{s}$, and the condition ($\ddag$) is equivalent to $\frac{q}{s}\leq N\leq\frac{q(n+1+\beta)}{s(n+1+\alpha)}$, so $S_{\varphi}$ maps $\HT^{p}_{q,\alpha}$ into $\HT^{t}_{s,\beta}$ if and only if $N\leq\frac{q(n+1+\beta)}{s(n+1+\alpha)}$. The desired result for the case $p(n+1+\alpha)/q<t(n+1+\beta)/s$ can be obtained similarly. Since Corollary \ref{incl} ensures that the inclusion $\HT^{p}_{q,\alpha}\subset\HT^{Nt}_{Ns,\beta}$ is bounded whenever it is valid, we establish that $S_{\varphi}:\HT^{p}_{q,\alpha}\to\HT^{t}_{s,\beta}$ is bounded whenever $S_{\varphi}$ maps $\HT^{p}_{q,\alpha}$ into $\HT^{t}_{s,\beta}$.
\end{proof}

The following lemma establishes a comparison principle for superposition operators, which indicates that if $\varphi$ is a polynomial such that $S_{\varphi}$ maps $\HT^{p}_{q,\alpha}$ into $\HT^{t}_{s,\beta}$, then for any polynomial $\psi$ whose degree is at most the degree of $\varphi$, $S_{\psi}$ also maps $\HT^{p}_{q,\alpha}$ into $\HT^{t}_{s,\beta}$.

\begin{lemma}\label{comp}
Let $0<p,q,s,t<\infty$ and $\alpha,\beta>-n-1$, and let $\varphi$ be a polynomial of degree $k$. Suppose that $S_{\varphi}$ maps $\HT^{p}_{q,\alpha}$ into $\HT^{t}_{s,\beta}$. Then for any polynomial $\psi$ of degree $j$ with $0\leq j\leq k$, $S_{\psi}$ maps $\HT^{p}_{q,\alpha}$ into $\HT^{t}_{s,\beta}$. Moreover, there exists $C=C(\varphi,\psi,s,t,\beta)>0$ such that for any $f\in\HT^{p}_{q,\alpha}$,
$$\|S_{\psi}f\|_{\HT^{t}_{s,\beta}}\leq C\left(\|S_{\varphi}f\|_{\HT^{t}_{s,\beta}}+1\right).$$
\end{lemma}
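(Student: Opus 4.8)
The plan is to reduce the whole statement to a single pointwise comparison between the two polynomials, after which the tent-space estimate becomes routine bookkeeping. The only place the hypothesis $j\le k$ enters is the growth at infinity: writing $a_k\neq0$ for the leading coefficient of $\varphi$, one has $|\varphi(u)|\gtrsim|u|^k$ for all $|u|\ge R$ with $R$ large, while $|\psi(u)|\lesssim|u|^{j}\le|u|^{k}$ in the same range; on the compact set $\{|u|\le R\}$ the continuous function $\psi$ is bounded, say by $M$. (If $k=0$ both polynomials are constant and the claim is immediate, so I assume $k\ge1$.) Combining the two regimes produces a constant $C_1=C_1(\varphi,\psi)>0$ with
$$|\psi(u)|\le C_1|\varphi(u)|+M,\qquad u\in\C.$$
Substituting $u=f(z)$ gives the pointwise bound $|S_\psi f(z)|\le C_1|S_\varphi f(z)|+M$ for every $f$ and every $z\in\B_n$. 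I would isolate this elementary comparison as the first step.

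Next I would record that the additive constant $M$, regarded as a bounded holomorphic function, already lies in the target space $\HT^t_{s,\beta}$. Indeed, using $\sigma(I(z))\asymp(1-|z|^2)^n$ and Tonelli's theorem as in \eqref{EqG},
$$\int_{\s_n}\Big(\int_{\Gamma(\xi)}(1-|z|^2)^{\beta}\,dv(z)\Big)\,d\sigma(\xi)\asymp\int_{\B_n}(1-|z|^2)^{\beta+n}\,dv(z)<\infty,$$
the finiteness being exactly the condition $\beta>-n-1$. By rotational invariance the inner integral $W:=\int_{\Gamma(\xi)}(1-|z|^2)^{\beta}\,dv(z)$ is independent of $\xi$ and finite, so $\|1\|_{\HT^t_{s,\beta}}^t=W^{t/s}\sigma(\s_n)<\infty$, and more generally every bounded holomorphic function belongs to $\HT^t_{s,\beta}$. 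This is the step most worth stating carefully, since it is where the standing hypothesis $\beta>-n-1$ is genuinely used.

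Finally I would combine these facts via the monotonicity and quasi-triangle inequality of the tent (quasi-)norm. Since $S_\psi f=\psi\circ f$ is holomorphic and $|S_\psi f|\le C_1|S_\varphi f|+M$ pointwise, monotonicity of $g\mapsto\|g\|_{T^t_{s,\beta}}$ in $|g|$ gives $\|S_\psi f\|_{\HT^t_{s,\beta}}\le\|C_1|S_\varphi f|+M\|_{T^t_{s,\beta}}$. Applying the elementary inequalities $(a+b)^{\varrho}\le c_{\varrho}(a^{\varrho}+b^{\varrho})$ with $\varrho=s$ and then $\varrho=t/s$ inside the defining double integral, and using $\int_{\Gamma(\xi)}dv_{\beta}=W$, I obtain $\|S_\psi f\|_{\HT^t_{s,\beta}}^t\le C_2\|S_\varphi f\|_{\HT^t_{s,\beta}}^t+C_3$ with $C_2,C_3$ depending only on $\varphi,\psi,s,t,\beta$ (and $n$). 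Taking $t$-th roots and using $(x+y)^{1/t}\lesssim x^{1/t}+y^{1/t}$ yields the asserted $\|S_\psi f\|_{\HT^t_{s,\beta}}\le C(\|S_\varphi f\|_{\HT^t_{s,\beta}}+1)$. In particular $S_\psi f\in\HT^t_{s,\beta}$ for every $f\in\HT^p_{q,\alpha}$, because $S_\varphi$ maps $\HT^p_{q,\alpha}$ into $\HT^t_{s,\beta}$ by hypothesis.

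The genuinely substantive point is the pointwise polynomial comparison of the first step, which is where the degree condition $j\le k$ is exploited; everything afterwards is routine. The only mild subtlety is the quasi-norm (rather than norm) behaviour when $s<1$ or $t<1$, which forces the split constants $c_{\varrho}$ and is precisely why the conclusion is stated with an additive $+1$ instead of a homogeneous estimate.
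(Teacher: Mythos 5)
Your proposal is correct and takes essentially the same approach as the paper: the paper's proof splits each cone $\Gamma(\xi)$ into $A_\xi=\{z\in\Gamma(\xi):|f(z)|\ge R\}$ and its complement, using $|\psi(u)|\le C|\varphi(u)|$ for $|u|\ge R$ on the first piece and the bound $M_\infty(\psi,R)$ together with $v_\beta(\Gamma(\xi))\asymp v_{n+\beta}(\B_n)<\infty$ on the second, which is precisely your global pointwise inequality $|\psi(u)|\le C_1|\varphi(u)|+M$ combined with the finiteness of $\int_{\Gamma(\xi)}dv_\beta$ for $\beta>-n-1$. The only cosmetic difference is that you fold the two regimes into a single pointwise bound before applying the quasi-norm inequalities, whereas the paper splits the integration domain directly.
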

\begin{proof}
Since $j\leq k$, there exist $C,R>0$ such that for any $u\in\C$ with $|u|\geq R$, $|\psi(u)|\leq C|\varphi(u)|$. Fix $f\in\HT^{p}_{q,\alpha}$. For any $\xi\in\s_n$, write
$$A_{\xi}:=\{z\in\Gamma(\xi):|f(z)|\geq R\}.$$
Noting that $\beta>-n-1$ ensures that $v_{\beta}(\Gamma(\xi))\asymp v_{n+\beta}(\B_n)<\infty$, we obtain that
\begin{align*}
\int_{\Gamma(\xi)}|S_{\psi}f(z)|^sdv_{\beta}(z)
&=\left(\int_{A_{\xi}}+\int_{\Gamma(\xi)\setminus A_{\xi}}\right)|\psi(f(z))|^sdv_{\beta}(z)\\
&\lesssim C^s\int_{\Gamma(\xi)}|\varphi(f(z))|^sdv_{\beta}(z)+M^s_{\infty}(\psi,R),
\end{align*}
which implies that
\begin{align*}
\int_{\s_n}\left(\int_{\Gamma(\xi)}|S_{\psi}f(z)|^sdv_{\beta}(z)\right)^{t/s}d\sigma(\xi)
&\lesssim\int_{\s_n}\left(\int_{\Gamma(\xi)}|\varphi(f(z))|^sdv_{\beta}(z)\right)^{t/s}d\sigma(\xi)+1\\
&=\|S_{\varphi}f\|^t_{\HT^{t}_{s,\beta}}+1,
\end{align*}
where the implicit constant depends only on $\varphi,\psi,s,t$ and $\beta$. Therefore, $S_{\psi}f\in\HT^{t}_{s,\beta}$, and
$$\|S_{\psi}f\|_{\HT^{t}_{s,\beta}}\lesssim\|S_{\varphi}f\|_{\HT^{t}_{s,\beta}}+1.$$
The arbitrariness of $f$ finishes the proof.
\end{proof}

We are now in a position to prove Theorem \ref{super}.

\begin{proof}[Proof of Theorem \ref{super}]
If $\varphi$ is a polynomial of degree $N$ that satisfies the desired conditions, then in view of Proposition \ref{moni}, $S_{\varphi}$ maps $\HT^{p}_{q,\alpha}$ boundedly into $\HT^{t}_{s,\beta}$. Since for any $z\in\B_n$, the point evaluation at $z$ is bounded on $\HT^{t}_{s,\beta}$, we can apply \cite[Theorem 3.1]{BR} to obtain that $S_{\varphi}:\HT^{p}_{q,\alpha}\to\HT^{t}_{s,\beta}$ is continuous. Conversely, suppose that $S_{\varphi}$ maps $\HT^{p}_{q,\alpha}$ into $\HT^{t}_{s,\beta}$. Then by Lemma \ref{poly}, $\varphi$ is a polynomial. Assume the degree of $\varphi$ is $N$. Then Lemma \ref{comp} gives that the function $\psi(u)=u^N,\ u\in\C$ also induces a superposition operator that maps $\HT^{p}_{q,\alpha}$ into $\HT^{t}_{s,\beta}$. Consequently, by Proposition \ref{moni}, $N$ satisfies the desired conditions.
\end{proof}






\end{document}